\documentclass[11pt]{article}

\usepackage[T1]{fontenc}
\usepackage[utf8]{inputenc}
\usepackage{lmodern}
\usepackage{microtype}
\usepackage[a4paper,margin=1in]{geometry}

\usepackage{amsmath,amssymb,amsfonts,amsthm,mathrsfs}

\usepackage[numbers,sort&compress]{natbib}

\usepackage{graphicx}
\usepackage{algorithm}
\usepackage{algpseudocode}
\usepackage{float}
\usepackage{enumitem}
\usepackage{diagbox}
\usepackage{booktabs}
\usepackage{nicefrac}
\usepackage{array}
\usepackage{makecell}
\usepackage{xcolor}
\usepackage{subcaption}

\usepackage[colorlinks=true,linkcolor=blue,citecolor=blue,urlcolor=blue]{hyperref}

\newtheorem{theorem}{Theorem}[section]
\newtheorem{proposition}[theorem]{Proposition}

\newtheorem{corollary}[theorem]{Corollary}
\newtheorem{assumption}[theorem]{Assumption}

\theoremstyle{definition}

\newtheorem{example}[theorem]{Example}

\theoremstyle{remark}
\newtheorem{remark}[theorem]{Remark}

\numberwithin{equation}{section}

\title{\textbf{Two-grid Penalty Approximation Scheme for Doubly Reflected BSDEs}}

\author{
Wonjae Lee\thanks{Department of Mathematical Sciences, Seoul National University, Seoul 08826, South Korea. \texttt{phwjlee117@gmail.com}}
\and
Hyungbin Park\thanks{Research Institute of Mathematics and Department of Mathematical Sciences, Seoul National University, Seoul 08826, South Korea. \texttt{hyungbin@snu.ac.kr}}
}

\date{\today}

\begin{document}

% Use symbolic footnotes (*, \dagger, \ddagger) for author affiliations
\renewcommand{\thefootnote}{\fnsymbol{footnote}}
\maketitle
\renewcommand{\thefootnote}{\arabic{footnote}}
\setcounter{footnote}{0}

\begin{abstract}
We propose and analyze a penalty-based approximation scheme for decoupled Markovian backward stochastic differential equations (BSDEs) with double obstacles. In contrast to the single-obstacle case, in the double-obstacle setting, the barrier approximation error amplified by the penalty parameter cannot be eliminated using a simple shift argument. To address this challenge, we introduce a two-grid method that combines a fine-grid simulation of the forward SDE with a coarse-grid Euler discretization of the penalized backward equation. Under suitable structural assumptions on the barriers, we derive a penalization error of order $O(\lambda^{-1})$ with respect to the penalty parameter $\lambda$, and establish error bounds for the fully discrete scheme. When the driver is independent of the integrand process, the method achieves the optimal order $O(\Delta t^{1/2})$ for backward grid size $\Delta t$ under an appropriate parameter coupling. Numerical experiments on the game put option illustrate and confirm the theoretical findings.
\end{abstract}

\noindent\textbf{Keywords:}
doubly reflected backward stochastic differential equation, penalty method, two-grid scheme, backward stochastic differential equation, Dynkin games

\section{Introduction}

Backward stochastic differential equations (BSDEs) with reflection arise naturally in stochastic control, mathematical finance, and game theory. Reflected BSDEs with a single barrier provide probabilistic representations of optimal stopping problems and obstacle partial differential equations. Doubly reflected BSDEs (DRBSDEs), which are subject to two barriers, are closely connected to Dynkin games, variational inequalities with two obstacles, and the valuation of game-type contingent claims such as callable, putable, and convertible securities; see, for example, \cite{BCJR:09,CK:96}.

This study investigates an approximation method for DRBSDEs. We consider a decoupled Markovian DRBSDE
\begin{equation}\label{eq:intro_drbsde}
\left\{\quad
\begin{aligned}
&dX_t = b(t,X_t)\,dt + \sigma(t,X_t)\,dW_t\,,\\
&Y_t = g(X_T) + \int_t^T f(s,X_s,Y_s,Z_s)\,ds \\
&\qquad\qquad+ (A_T-A_t) - (K_T-K_t) - \int_t^T Z_s\,dW_s\,,\;\;0\le t\le T
\end{aligned}
\right.
\end{equation}
subject to the double-obstacle constraint
$$p_b(t,X_t)\le Y_t \le p_w(t,X_t)\,,\; t\in [0,T]\,.$$
A classical approach to approximating the reflected BSDEs is the penalization method, in which the reflection terms are replaced by large penalty terms whenever the solution exits the admissible region. In this setting, the DRBSDE \eqref{eq:intro_drbsde} is approximated by the penalized BSDE
\begin{equation}\label{eq:intro_penalized}
Y_t^\lambda
= g(X_T) + \int_t^T f_\lambda(s,X_s,Y_s^\lambda,Z_s^\lambda)\,ds
- \int_t^T Z_s^\lambda\,dW_s,
\end{equation}
  where
\[
f_\lambda(t,x,y,z)
:= f(t,x,y,z) + \lambda\,(y-p_b(t,x))^- - \lambda\,(p_w(t,x)-y)^-,
\quad \lambda>0\,.
\]

In practice, the penalized BSDE is solved numerically using time discretization. In most existing studies, the discretization of the penalized equation \eqref{eq:intro_penalized} is based on a grid given by a single partition of \([0,T]\). However, in such schemes, the explicit error is either suboptimal, or controlled through a shifting trick that is unavailable for the double-obstacle case. 
To reduce the error to standard rates, we instead employ two grids given by nested partitions
\[
\pi=\{t_i\}_{i=0}^n \subseteq \tilde{\pi}=\{\tilde t_j\}_{j=0}^{mn}
\]
of \([0,T]\), with coarse and fine step sizes
\[
\Delta t := T/n,
\qquad
\tilde{\Delta t} := T/(mn),
\]
respectively. More precisely, we discretize the penalized DRBSDE \eqref{eq:intro_penalized} on the coarse grid \(\pi\), while the forward process \(X\) is simulated on the finer grid \(\tilde{\pi}\) by the Euler--Maruyama scheme and then evaluated at the coarse-grid times.
With $\Delta W_{i+1}=W_{t_{i+1}}-W_{t_i}$,  the scheme is given by
\begin{equation*}
    \begin{cases}
        \mathcal{Y}^{\lambda,\pi}_{t_{i+1}}
:=
\mathcal{Y}^{\lambda,\pi}_{t_i}
-
f_\lambda\left(t_i,X^{\tilde{\pi}}_{t_i},\mathcal{Y}^{\lambda,\pi}_{t_i},\mathcal{Z}^{\lambda,\pi}_{t_i}\right)\Delta t
+
\mathcal{Z}^{\lambda,\pi}_{t_i}\Delta W_{i+1},\\
\mathcal Y^{\lambda,\pi}_{t_n}=g(X^{\tilde\pi}_{t_n}),
    \end{cases}
\end{equation*}
for $i=n-1,\dots,0$.

Several challenges naturally arise in implementing this procedure: (i) understanding how the penalty parameter \(\lambda\) interacts with the time discretization, (ii) identifying choices of \(\lambda\) and \(\tilde{\Delta t}\) that yield an efficient implementation, and (iii) determining the convergence rate achieved by the resulting approximation scheme. This study addresses these questions.

\medskip
\noindent\textbf{Related Works}
Although studies specifically addressing explicit error bounds for penalty-based numerical schemes for (doubly) reflected BSDEs are scarce, we mention the following works that are related to the present study.
\begin{itemize}

\item In \cite{BC:08}, the authors studied a non-penalization-based discrete-time approximation scheme for reflected BSDEs in a Markovian setting, with obstacle of the form $h(X)$ for a forward diffusion $X$. They established a convergence rate of order $O(\Delta t^{1/4})$ for the discretely reflected problem, and obtained the improved rate $O(\Delta t^{1/2})$ under the additional smoothness assumption $h\in C_b^2$.

\item The corresponding double-barrier case was studied in \cite{C:09}, where an Euler-type approximation for doubly reflected BSDEs was introduced. Under varying regularity assumptions, squared-error bounds of order $O(\Delta t^{q})$ were obtained for suitable $q\in\{1/6,\,1/3,\,1/2\}$.

\item It was shown in \cite{X:11}, which proposes a binomial-tree-based discretization scheme for BSDEs with two continuous barriers, that the penalized solutions converge to the solution of the original DRBSDE, with penalization error of order $O(\lambda^{-1/2})$.

\item In \cite{PWW:24}, the authors proposed the Deep Penalty Method, which relies on a penalty discretization scheme for reflected BSDEs similar to ours, with drivers of the form \(f(t,x)-ry\), and its penalized version
\[
f(t,x)-ry-\lambda(y-p(t,x))^-, \qquad \lambda>0.
\]
Under additional assumptions on the $C^{1,2}$ barrier function $p$ and by an application of the comparison principle, an improved $O(\lambda^{-1})$ bound on the penalization error is obtained. In addition, the shift $Y_\cdot-p(\cdot,X_\cdot)$ is used to remove the forward-simulation error amplified by $\lambda$. Furthermore, an explicit absolute error bound is derived, and the choice \(\lambda\asymp \Delta t^{-1/2}\) yields the standard $O(\Delta t^{1/2})$ convergence rate.

\end{itemize}

\medskip
\noindent\textbf{Contributions.}
The main contributions of this study are summarized as follows.

\begin{enumerate}
\item \textbf{Improved penalization error under structural assumptions.}
Building on a partial differential equation (PDE) comparison/viscosity argument in the spirit of \cite{PWW:24} and under additional assumptions tailored to financial barriers, we improve the pure penalization bound and obtain
\[
|Y_t-Y_t^\lambda|\le \frac{C}{\lambda}
\]
uniformly in \(t\), improving the \(\lambda^{-1/2}\) bound in \cite{X:11}. 
This sharper penalization rate is essential for balancing the total approximation error in Corollary \ref{cor:final}.

\item \textbf{Two-grid forward simulation for double barriers.} We propose a two-grid discretization to mitigate the error from simulation of the forward process amplified by $\lambda$. Our method restores the standard \(O(\Delta t^{1/2})\) convergence behavior of the value approximation with respect to the backward time step, without requiring the backward scheme to be run on a fine grid. For the two barrier case, no transformation analogous to the shift $Y_\cdot-p(\cdot,X_\cdot)$ in \cite{PWW:24} can be deployed. Instead, our method simulates the forward SDE on a finer grid \(\tilde\pi\) and discretizes the penalized backward equation on a coarser grid \(\pi\subset\tilde\pi\). This separation is essential in the double-barrier case because the error incurred in evaluating the barriers \(p_b(t,X_t)\) and \(p_w(t,X_t)\) along the forward approximation appears multiplied by the penalty parameter \(\lambda\). By refining only the forward simulation, the scheme controls this amplified error at relatively low computational cost, while preserving a cost-effective coarse-grid backward recursion.

\item \textbf{Explicit error rates with parameter coupling.}
We derive quantitative error bounds for the full penalization--discretization approximation
\((X^{\tilde\pi},\mathcal{Y}^{\lambda,\pi},\mathcal{Z}^{\lambda,\pi})\), with explicit dependence on the coarse and fine step sizes
\(\Delta t\) and \(\tilde{\Delta t}\), and on the penalty parameter \(\lambda\).
In the general Lipschitz case, allowing \(Z\)-dependence in the driver, we prove the mean-square estimate
\[
\Big(\max_{0\le i\le n-1}\mathbb{E}|Y_{t_i}-\mathcal{Y}^{\lambda,\pi}_{t_i}|^2\Big)^{1/2}
\le
C\Big(\lambda \Delta t^{1/2}+\lambda^{-1}\Big),
\]
under the natural terminal approximation, so that the balancing choice
\(\lambda\asymp \Delta t^{-1/4}\) yields the rate \(O(\Delta t^{1/4})\) for the value process.
When the driver is independent of \(Z\), we further obtain the absolute-error bound
\[
\max_{0\le i\le n-1}\mathbb{E}|Y_{t_i}-\mathcal{Y}^{\lambda,\pi}_{t_i}|
\le
C\Big(\Delta t^{1/2}+\lambda \tilde{\Delta t}^{1/2}+\lambda\Delta t+\lambda^{-1}\Big),
\]
which exhibits the two-grid coupling explicitly. In particular, choosing
\(\lambda\asymp \Delta t^{-1/2}\) together with
\(\tilde{\Delta t}=O(\Delta t/\lambda^2)\) recovers the target rate
\[
\max_{0\le i\le n-1}\mathbb{E}|Y_{t_i}-Y^{\lambda,\pi}_{t_i}| = O(\Delta t^{1/2}).
\]
These results extend the single-obstacle analysis of \cite{PWW:24} to the double-barrier setting. In contrast to \cite{PWW:24}, which treats drivers linear in \(Y\) and independent of \(Z\), our framework allows drivers that are Lipschitz in both \(Y\) and \(Z\).

\item \textbf{Nonsmooth payoffs and barriers via multivariate It\^o--Tanaka.}
Financial barriers such as basket calls and puts are typically not globally \(C^{1,2}\), as assumed in \cite{PWW:24}. We allow barrier functions that are \(C^{1,2}\) away from a finite union of \(C^2\) hypersurfaces. The analysis uses Tanaka-type arguments together with the change-of-variable formula with local time on surfaces (Peskir, 2007), which allows us to handle kink-type singularities while still deriving quantitative bounds compatible with penalization and discretization.

\item \textbf{Numerical validation and finite-sample behavior.}
We complement the theoretical analysis with numerical experiments for a one-dimensional game put under the Black--Scholes model. The grid-refinement experiment, performed under the theoretically motivated coupling \(\lambda\asymp n^{1/2}\), exhibits relative errors consistent with an \(n^{-1/2}\) decay. A second penalty sweep at fixed \(n\) indicates that over the tested range, the error continues to decrease as \(\lambda\) increases, indicating that the asymptotic balancing effect is not present in our example.
\end{enumerate}

\medskip
\noindent\textbf{Organization.}
The remainder of the paper is structured as follows. Section~2 introduces the DRBSDE framework and basic assumptions, together with existence and uniqueness results. Section~3 establishes the penalization error and the improved \(O(\lambda^{-1})\) bound. Section~4 presents the two-grid numerical scheme and derives the explicit discretization error bounds, along with parameter couplings that yield the \(O(\Delta t^{1/2})\) rate. Section~5 reports the numerical experiments for a game put example under the Black--Scholes model. Section~6 presents the conclusion.

\section{Doubly reflected BSDEs: setting and preliminaries}
\label{sec:prelim}
We now describe the framework and introduce Markovian doubly reflected BSDEs under suitable assumptions. Let us fix a time horizon $T>0$ and consider a filtered probability space 
\[
(\Omega, \mathcal{F}_T, \mathbb{F} = (\mathcal{F}_t)_{t \in [0,T]}, \mathbb{P}).
\]
 Let $W = (W_t)_{t \in [0,T]}$ be the standard $\mathbb{R}^d$-valued Brownian motion defined on the above probability space. 
For $m\in\mathbb{N}$ and $p\ge2$, we use the following notations.

\begin{itemize}
    \item $L^{p}_{\mathcal{F}_T}(\mathbb{R}^m)$ is the space of all $\mathcal{F}_T$-measurable $\mathbb{R}^m$-valued random variables $X$ such that 
    $$\|X\|^p:=\mathbb{E}\left[|X|^p\right]<\infty.$$
    
    \item $\mathbb{H}^p(\mathbb{R}^m)$ is the space of all $\mathbb{R}^m$-valued progressively measurable processes $\phi$ such that
    $$\|\phi\|^p_{\mathbb{H}^p}:=\mathbb{E}\left[\left(\int^{T}_{0}|\phi_t|^2\,dt\right)^\frac{p}{2}\right]<\infty.$$

    \item $\mathbb{S}^p(\mathbb{R}^m)$ is the space of all adapted continuous processes $\phi$ such that
    $$\|\phi\|^p_{\mathbb{S}^p}:=\mathbb{E}\left[\sup_{t\in[0,T]}|\phi_t|^p \right]<\infty.$$

    \item $\mathcal{A}$ is the space of real-valued non-decreasing continuous predictable processes $A$ with $A_0=0$. We also define
    $$\mathcal{A}^{2}:=\{A\in\mathcal{A}\mid\mathbb{E}\left[A^2_T\right]<\infty\}.$$

\end{itemize}

We now proceed to define doubly reflected BSDEs and state our assumptions. For $X_0\in L^2(\Omega)$, we consider BSDEs of the form

\begin{equation}
\label{BSDE}
\begin{cases}
    \text{(i)}\ X_t=X_0+\int^t_0 b(s,X_{s})\,ds + \int^t_0 \sigma(s,X_{s})^\top\,dW_s,\quad t\in[0,T],\\
    \text{(ii)}\ Y_t = g(X_T)+\int^T_t f\left(s,X_{s},Y_{s},Z_s\right)\,ds +(A_T-A_t)\\
    \ \ \ \ \ \ \ \ \ \  -(K_T-K_t) -\int^T_t (Z_s)^\top\,dW_s, \quad t\in[0,T],\\
    \text{(iii)}\ \forall t\in[0,T],\ \xi_t:=p_b(t,X_t) \le Y_t \le \zeta_t:=p_w(t,X_t)\ a.s.\\
    \text{(iv)}\ \int^T_0(Y_{t}-\xi_{t})dA_t=0\ a.s.\ \text{and} \int^T_0(\zeta_{t}-Y_{t})dK_t=0\ a.s.
\end{cases}
\end{equation}
under the following assumptions on the coefficients.

\begin{assumption}
\label{assumptions}
\leavevmode
    \begin{enumerate}[label=(\roman*), leftmargin=*, align=left]
        \item The maps $b:[0,T]\times\mathbb{R}^d\to \mathbb{R}^d$, $\sigma:[0,T]\times\mathbb{R}^d\to\mathbb{R}^{d\times d}$ are $\frac{1}{2}$-H\"older continuous in the first argument. That is, there exist constants $K_{b,\frac{1}{2}},K_{\sigma,\frac{1}{2}},$ such that
        \begin{align*}
            |b(t,x)-b(t',x)|\le K_{b,\frac{1}{2}}|t-t'|^\frac{1}{2},\quad
            |\sigma(t,x)-\sigma(t',x)|\le K_{\sigma,\frac{1}{2}}|t-t'|^\frac{1}{2}
        \end{align*}
        for all $t,t'\in[0,T]$ and $x\in\mathbb{R}^d$. Also, $b$ and $\sigma$ are Lipschitz continuous in the second argument, i.e., there exist constants $K_b,K_\sigma>0$ such that
        \begin{align*}
            |b(t,x)-b(t,y)|\le K_b|x-y|,\quad
            |\sigma(t,x)-\sigma(t,y)|\le K_\sigma|x-y|
        \end{align*}
        for all $t\in[0,T]$  and $x,y\in\mathbb{R}^d$.

        \item The function $g:\mathbb{R}^d\to\mathbb{R}$ is Lipschitz continuous, i.e., there exists a constant $K_g$ such that
        $$|g(x)-g(y)|\le K_g|x-y|$$
        for all $x,y\in\mathbb{R}^d$
        
        \item There exist constants $K_x, K_y,K_z>0$ such that the function $f:[0,T]\times\mathbb{R}^d\times\mathbb{R}\times\mathbb{R}^d\to \mathbb{R}$ satisfies
        $$|f(t,x,y,z)-f(t,x',y',z')|\le K_x|x-x'|+K_y|y-y'|+K_z|z-z'|$$
        for all $(t,x,y,z),(t,x',y',z')\in[0,T]\times\mathbb{R}^d\times\mathbb{R}\times\mathbb{R}^d$. Also, 
        $$\|f(0,\cdot,0,0)\|_\infty<\infty$$
        and there exists a constant $K_{f,\frac{1}{2}}$ such that 
        $$|f(t,x,y,z)-f(t',x,y,z)|\le K_{f,\frac{1}{2}}|t-t'|^\frac{1}{2}$$
        for all $(t,x,y,z),(t',x',y',z')\in [0,T]\times\mathbb{R}^d\times\mathbb{R}\times\mathbb{R}^d$. 
        
        \item The functions $p_w, p_b:[0,T]\times\mathbb{R}^d\to\mathbb{R}$ are $\frac{1}{2}$-H\"older continuous in the first argument with H\"older constants $K_{p_w,\frac{1}{2}},K_{p_b,\frac{1}{2}}$, respectively, and Lipschitz-continuous in the second arguments with Lipschitz constants $K_{p_w},K_{p_b}$, respectively.

        \item We have $p_b(t,x)< p_w(t,x)$ and
       $p_b(T,x)\le g(x)\le p_w(T,x)$ for all $t\in[0,T)$ and $x\in\mathbb{R}^d$.
    
    \end{enumerate}
\end{assumption}

With Assumption~\ref{assumptions}, we can establish the following theorem of uniqueness and existence of solution to \eqref{BSDE} as a straightforward corollary to Theorem 3.7 of \cite{HH:05}.

\begin{theorem}
    Under Assumption~\ref{assumptions}, there exists a unique solution $(X,Y,Z,A,K)\in\mathbb{S}^2(\mathbb{R}^d)\times\mathbb{S}^2(\mathbb{R})\times\mathbb{H}^2(\mathbb{R}^d)\times\mathcal{A}^2\times \mathcal{A}^2$ to the BSDE \eqref{BSDE}.
\end{theorem}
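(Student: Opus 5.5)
The plan is to check the hypotheses of Theorem 3.7 of \cite{HH:05} (Hamadène--Hassani) and assemble the pieces. First, the forward component. Under Assumption~\ref{assumptions}(i), $b$ and $\sigma$ are Lipschitz in $x$ uniformly in $t$ and continuous in $t$. Hence they have linear growth: $|b(t,x)|\le |b(0,0)|+K_{b,\frac12}T^{1/2}+K_b|x|$, and similarly for $\sigma$. Classical SDE theory then gives a unique strong solution $X\in\mathbb{S}^2(\mathbb{R}^d)$ for $X_0\in L^2(\Omega)$, and in fact $X\in\mathbb{S}^p$ for every $p$ such that $X_0\in L^p$. Since the system is decoupled, $X$ can be fixed, and the backward part becomes a doubly reflected BSDE with random data:
\[
\xi^{\mathrm{term}}:=g(X_T),\quad F(s,\omega,y,z):=f(s,X_s(\omega),y,z),\quad \xi_t=p_b(t,X_t),\quad \zeta_t=p_w(t,X_t).
\]

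Second, I would verify the data conditions required in \cite{HH:05}. By (ii), $|g(X_T)|\le |g(0)|+K_g|X_T|$, so $g(X_T)\in L^2_{\mathcal{F}_T}$. By (iii), $F$ is uniformly Lipschitz in $(y,z)$. Moreover $|F(s,0,0)|\le \|f(0,\cdot,0,0)\|_\infty+K_{f,\frac12}T^{1/2}$, which is bounded, so $F(\cdot,0,0)\in\mathbb{H}^2$; progressive measurability follows from the continuity of $f$. By (iv), the obstacles $\xi,\zeta$ are continuous adapted processes satisfying $|\xi_t|\le C(1+|X_t|)$, so $\xi,\zeta\in\mathbb{S}^2$. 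By (v), $\xi_t\le\zeta_t$, with strict inequality on $[0,T)$, and $\xi_T\le g(X_T)\le\zeta_T$, which is the required compatibility at the terminal time.

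The step I expect to be the main obstacle is the separation and regularity condition that guarantees existence for two barriers. This is Mokobodski's condition in the older literature, while Hamadène--Hassani replace it by requiring $\xi_t<\zeta_t$ for $t<T$ together with continuity of the barriers, possibly plus a local semimartingale-type requirement. If their Theorem 3.7 needs exactly completely separated continuous obstacles, then (v) and the continuity from (iv) suffice, and the result follows at once.

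If instead their statement needs an extra regularity assumption (for instance, one barrier being a semimartingale), my first attempt would be a localization argument. On the random intervals where $\xi<\zeta$, I would use the strict separation and continuity on compact subintervals of $[0,T)$ to construct the local solutions as in their proof. I would handle the endpoint $T$ through the terminal compatibility in (v).

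Uniqueness is then the standard argument. Take two solutions and apply Itô's formula to $|Y-Y'|^2$. The reflection terms satisfy
\[
(Y_t-Y'_t)\,d(A_t-A'_t)\le 0 \quad\text{and}\quad -(Y_t-Y'_t)\,d(K_t-K'_t)\le 0
\]
by the Skorokhod conditions (iv) and the constraint (iii). Combining this with Gronwall's inequality gives $Y=Y'$ and $Z=Z'$. Strict separation then yields $A=A'$ and $K=K'$, since $dA$ and $dK$ are carried by the disjoint sets $\{Y=\xi\}$ and $\{Y=\zeta\}$.
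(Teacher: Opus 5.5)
Your proposal takes the same route as the paper, which simply calls the result a straightforward corollary of Theorem 3.7 of \cite{HH:05}; your checks of the data (forward SDE solvability, $g(X_T)\in L^2$, boundedness of $f(\cdot,X_\cdot,0,0)$, continuous $\mathbb{S}^2$ barriers, terminal compatibility), the It\^o--Gronwall uniqueness argument, and the use of separation to split $A$ and $K$ make that citation explicit. One point is left unchecked by both you and the paper: Assumption~\ref{assumptions}(v) gives strict separation only on $[0,T)$ (the game put has $p_b(T,\cdot)=p_w(T,\cdot)$), so you should confirm that the cited theorem allows the barriers to meet at $T$, because your localization fallback's appeal to terminal compatibility alone would not control $A$ and $K$ near $T$.
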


Furthermore, it is well known that \eqref{BSDE} has a connection to the free-boundary PDE for $V:[0,T]\times\mathbb{R}^d\to\mathbb{R}$ given by
\begin{equation}
\label{fbpde}
    \begin{cases}
        \max\left(\min(-\mathcal{L}V-f(\cdot,\cdot,V,\sigma\nabla_xV),V-p_b),V-p_w\right)=0\text{ on } [0,T)\times\mathbb{R}^d\\
        p_b(t,x)\le V(t,x)\le p_w(t,x),\\
        V(T,x)=g(x),
    \end{cases}
\end{equation}
where
\begin{equation*}
\begin{aligned}
    \mathcal{L}V(t,x):=\partial_tV(t,x)+&b(t,x)^\top\nabla_xV(t,x)+\frac{1}{2}\sigma(t,x)^\top\nabla_x^2 V(t,x)\sigma(t,x).
\end{aligned}
\end{equation*}
Specifically, we have the following well-known result, as can be checked in \cite{DQS:16}. 
\begin{theorem}
    Suppose Assumption~\ref{assumptions} holds. Let $(X,Y,Z,A,K)$ be a solution to \eqref{BSDE}. The PDE \eqref{fbpde} has a unique viscosity solution $V$ and we have
    $$Y_t=V(t,X_t)$$
    almost surely for all $t\in[0,T]$.
\end{theorem}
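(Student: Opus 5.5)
The plan is to go through the Markov property and the penalized equations. For each $(t,x)$, let $X^{t,x}$ be the forward diffusion started from $x$ at time $t$. Let $(Y^{t,x},Z^{t,x},A^{t,x},K^{t,x})$ be the solution of \eqref{BSDE} on $[t,T]$ driven by $X^{t,x}$; it exists and is unique by the previous theorem. We then define $V(t,x):=Y^{t,x}_t$, which is deterministic by the Blumenthal $0$--$1$ law.

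\textbf{Step 1: $V$ is well defined and continuous.} Standard a priori estimates for DRBSDEs with Lipschitz data (as in \cite{HH:05}) give $\mathbb{E}\sup|Y^{t,x}-Y^{t',x'}|^2\le C(|x-x'|^2+|t-t'|)$. These estimates use the Lipschitz and H\"older bounds on $b,\sigma,f,g,p_b,p_w$ in Assumption~\ref{assumptions}, together with the flow estimates $\mathbb{E}\sup|X^{t,x}-X^{t',x'}|^2\le C(|x-x'|^2+|t-t'|)$. Hence $V$ is continuous with linear growth. Pathwise uniqueness and the flow property $X^{t,x}_s=X^{r,X^{t,x}_r}_s$ show that $Y^{t,x}_s=V(s,X^{t,x}_s)$ a.s. Specializing to the initial condition $X_0$, by conditioning, gives $Y_t=V(t,X_t)$.

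\textbf{Step 2: $V$ is a viscosity solution.} We use the penalized BSDEs \eqref{eq:intro_penalized} with driver $f_\lambda$. For these, the classical Pardoux--Peng result gives $Y^{\lambda,t,x}_t=V_\lambda(t,x)$, where $V_\lambda$ is the viscosity solution of the semilinear PDE $-\mathcal{L}V_\lambda-f_\lambda(\cdot,\cdot,V_\lambda,\sigma\nabla_xV_\lambda)=0$ with $V_\lambda(T,\cdot)=g$. By the convergence of penalized solutions to the DRBSDE solution (Cvitani\'c--Karatzas; Hamad\`ene--Hassani), $V_\lambda\to V$ pointwise. Since $V_\lambda$ is monotone in $\lambda$ on the appropriate regions and $V$ is continuous, Dini's theorem upgrades this to locally uniform convergence. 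The penalized operators degenerate into the obstacle operator in \eqref{fbpde}, so stability of viscosity solutions (Barles--Perthame half-relaxed limits) shows that $V$ is a viscosity solution. Concretely, at a point where $V<p_w$ the term $-\lambda(p_w-y)^-$ vanishes near the limit, which gives the subsolution inequality $\min(\cdot,V-p_b)\le0$. The case $V>p_b$ is symmetric, and the constraint $p_b\le V\le p_w$ follows from the barrier condition. Alternatively, one could verify the definition directly via a comparison of BSDEs at test functions, using the minimality conditions (iv); this is the route taken in \cite{DQS:16}.

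\textbf{Step 3: uniqueness.} This is the main obstacle. We need a comparison principle for \eqref{fbpde} in the class of continuous functions with polynomial growth. We would run the usual doubling-of-variables argument with the penalization $\frac{|x-y|^2}{2\varepsilon}+\beta(|x|^2+|y|^2)$ and a time weight $e^{\mu t}$ that absorbs $K_y$. Ishii's lemma, together with the Lipschitz continuity of $b,\sigma$ in $x$, controls the second-order terms. Suppose $u$ is a subsolution and $v$ a supersolution with $\max(u-v)>0$. At the maximizing point we would have $u>v$, which forces $u>p_b$ there, since $v\ge p_b$. We would also have $v<p_w$ there, since $u\le p_w$. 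So at that point the PDE inequalities reduce to the semilinear ones, and we reach a contradiction exactly as in the semilinear case. The $z$-dependence through $\sigma\nabla_x V$ is handled using $K_z$ and the Lipschitz continuity of $\sigma$. Uniqueness then identifies $V$ with the unique viscosity solution, which concludes the proof.
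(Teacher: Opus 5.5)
Your overall plan is a legitimate and more self-contained route than the paper's. It has three parts: the Markov representation $V(t,x)=Y^{t,x}_t$, existence of a viscosity solution by passing to the limit in the penalized equations \eqref{pide}, and uniqueness by doubling of variables. The paper itself gives no argument and simply cites \cite{DQS:16}, which follows the direct-verification route you mention at the end of Step 2. That route needs no penalization input. Yours connects the obstacle problem to the penalized PDE used later in the paper. Once repaired, it also yields locally uniform convergence $V_\lambda\to V$ as a by-product. However, one step of your Step 2 fails as written.

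\textbf{The failing step: monotonicity and Dini.} You claim ``$V_\lambda$ is monotone in $\lambda$ on the appropriate regions, so Dini gives locally uniform convergence''.
\begin{itemize}
\item For the one-parameter two-sided driver, $\partial_\lambda f_\lambda=(y-p_b)^--(p_w-y)^-$ is $\ge0$ on $\{y<p_b\}$ and $\le0$ on $\{y>p_w\}$.
\item So $f_\lambda$ and $f_{\lambda'}$ are not ordered, and the comparison theorem yields nothing. A sign of the driver on regions does not transfer to the solution, which depends on the driver along whole paths.
\item In fact $\lambda\mapsto V_\lambda(t,x)$ can fail to be monotone even at a fixed point. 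Already with $\sigma=0$, $f=0$, take a trajectory that is first pushed up by $p_b$ and then, earlier in time, pushed down onto a flat piece of $p_w$. Its value first increases and then decreases in $\lambda$.
\item Monotonicity is available only for the two-parameter scheme $f+n(y-p_b)^--m(y-p_w)^+$, and then only separately in $n$ and in $m$.
\end{itemize}

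\textbf{How to repair it.} Drop uniform convergence and use half-relaxed limits properly.
\begin{itemize}
\item Since $f-\lambda(y-p_w)^+\le f_\lambda\le f+\lambda(y-p_b)^-$, compare with the one-sided penalizations and their monotone limits. This gives $V^{w}\le V_\lambda\le V^{b}$. Here $V^{b}$ and $V^{w}$ are the continuous, linearly growing value functions of the reflected BSDEs with only the lower obstacle $p_b$, resp.\ only the upper obstacle $p_w$.
\item This sandwich gives bounds uniform in $\lambda$. It also forces $\overline V:=\limsup^{*}V_\lambda$ and $\underline V:=\liminf_{*}V_\lambda$ to equal $g$ at $t=T$.
\item $\overline V$ is then a u.s.c.\ subsolution and $\underline V$ an l.s.c.\ supersolution of \eqref{fbpde}. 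The constraints $\overline V\le p_w$ and $\underline V\ge p_b$ follow from the blow-up of the penalty terms at strict extremum points.
\item Pointwise convergence gives $\underline V\le V\le\overline V$, and comparison gives $\overline V\le\underline V$. Hence $V=\overline V=\underline V$ is a viscosity solution.
\end{itemize}
Consequently, Step 3 must be proved for semicontinuous sub- and supersolutions, not only continuous ones, and it logically precedes Step 2.

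\textbf{Smaller points.}
\begin{itemize}
\item In Step 1, the generic stability estimate for DRBSDEs with different obstacles contains cross terms like $\mathbb{E}\int(\xi-\xi')\,d(A-A')$. Given $L^2$ bounds on $A^{t,x},K^{t,x}$ uniform on compacts, it yields only $\mathbb{E}\sup|Y^{t,x}-Y^{t,x'}|^2\le C(|x-x'|^2+|x-x'|)$. Your sharper bound needs more, e.g.\ the Dynkin-game representation. Only continuity is used, so this is harmless.
\item In Step 3, ``$u>v$ at the maximizing point'' should read $u(t_\varepsilon,x_\varepsilon)-v(t_\varepsilon,y_\varepsilon)\ge\delta>0$ with $|x_\varepsilon-y_\varepsilon|\to0$. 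Continuity of $p_b,p_w$ then gives $u(t_\varepsilon,x_\varepsilon)>p_b(t_\varepsilon,x_\varepsilon)$ and $v(t_\varepsilon,y_\varepsilon)<p_w(t_\varepsilon,y_\varepsilon)$ for small $\varepsilon$.
\item Check that the result you cite for $Y^{\lambda,t,x}_t\to Y^{t,x}_t$ covers one-parameter two-sided penalization with merely Lipschitz obstacles that are separated only on $[0,T)$.
\end{itemize}
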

For our method, we consider the following penalized BSDE which approximates \eqref{BSDE} as $\lambda\to\infty$:
\begin{equation}
\label{pBSDE}
\begin{cases}
    \text{(i)}\ X_t=X_0+\int^t_0 b(s,X_{s})\,ds + \int^t_0 \sigma(s,X_{s})^\top\,dW_s,\quad t\in[0,T],\\
    \text{(ii)}\ Y^\lambda_t = g(X_T)+\int^T_t f_\lambda\left(s,X_{s},Y^\lambda_s,Z^\lambda_s\right)\,ds-\int^T_t (Z^\lambda_s)^\top\,dW_s, \quad t\in[0,T].
\end{cases}
\end{equation}
Here, $\lambda>0$ and $f_\lambda:[0,T]\times\mathbb{R}^d\times\mathbb{R}\to \mathbb{R}$ are defined by
$$f_\lambda(t,x,y,z):=f(t,x,y,z)+\lambda\left(y-p_b(t,x)\right)^{-}-\lambda\left(p_w(t,x)-y\right)^{-}.$$
We aim to calculate the error bound of the discretization scheme for the penalized BSDE \eqref{pBSDE}. Again, if Assumption~\ref{assumptions} holds, we have the following uniqueness and existence result (See, e.g., \cite{MY:99}).
\begin{theorem}
        Under Assumption~\ref{assumptions}, there exists a unique solution $(X,Y^\lambda,Z^\lambda)\in\mathbb{S}^2(\mathbb{R}^d)\times\mathbb{S}^2(\mathbb{R})\times\mathbb{H}^2(\mathbb{R}^d)$ to the BSDE \eqref{pBSDE}.
\end{theorem}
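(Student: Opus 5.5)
The plan is to reduce the statement to the classical existence and uniqueness theory for Lipschitz BSDEs (Pardoux--Peng, or the version in \cite{MY:99}). The work splits into two parts. First we solve the forward equation (i). Then we check that, with $X$ fixed, the penalized driver satisfies the standard Lipschitz and integrability hypotheses.

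\emph{Step 1 (forward SDE).} By Assumption~\ref{assumptions}(i), $b$ and $\sigma$ are Lipschitz in $x$ uniformly in $t$. They are also $\frac12$-H\"older in $t$, so $|b(t,0)|+|\sigma(t,0)|$ is bounded on $[0,T]$, which gives linear growth. Since $X_0\in L^2$, the classical It\^o theory yields a unique strong solution $X$. Burkholder--Davis--Gundy and Gronwall then give $\mathbb{E}\sup_t|X_t|^2\le C(1+\mathbb{E}|X_0|^2)<\infty$, so $X\in\mathbb{S}^2(\mathbb{R}^d)$.

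\emph{Step 2 (driver and terminal condition).} Define the random driver
\[
F(\omega,s,y,z):=f_\lambda(s,X_s(\omega),y,z).
\]
It is progressively measurable because $f$, $p_b$, $p_w$ are continuous and $X$ is continuous and adapted. The maps $y\mapsto(y-p_b)^-$ and $y\mapsto(p_w-y)^-$ are $1$-Lipschitz. Hence $F$ is Lipschitz in $(y,z)$ with constants $K_y+2\lambda$ and $K_z$. For integrability we need $\mathbb{E}\int_0^T|F(s,0,0)|^2ds<\infty$. We bound
\[
|F(s,0,0)|\le |f(0,0,0,0)|+K_x|X_s|+K_{f,\frac12}T^{1/2}+\lambda\bigl(|p_b(s,X_s)|+|p_w(s,X_s)|\bigr).
\]
The barriers have linear growth: Lipschitz continuity in $x$ together with H\"older continuity in $t$ gives $|p(s,x)|\le |p(0,0)|+K_{p,\frac12}T^{1/2}+K_p|x|$. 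Combined with Step 1, this yields square integrability. Finally, $g$ is Lipschitz, so $|g(X_T)|\le |g(0)|+K_g|X_T|$, and therefore $g(X_T)\in L^2_{\mathcal{F}_T}$.

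\emph{Step 3 (conclusion).} With the data of Step 2, the standard Lipschitz BSDE theorem applies: a Picard contraction in the weighted norm $\mathbb{E}\int e^{\beta s}(|Y_s|^2+|Z_s|^2)ds$ with $\beta$ large. It yields a unique pair $(Y^\lambda,Z^\lambda)\in\mathbb{S}^2\times\mathbb{H}^2$. Uniqueness of the triple follows from uniqueness of $X$ in Step 1 and of $(Y^\lambda,Z^\lambda)$ given $X$.

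The only point needing care is the integrability estimate in Step 2. There we must use the H\"older-in-time conditions to obtain linear growth uniformly in $t$. Note that the constant of $F$ grows with $\lambda$, but $\lambda$ is fixed throughout, so this is harmless.
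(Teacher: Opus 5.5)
Your proposal is correct and follows the same route as the paper, which simply invokes the standard Lipschitz BSDE existence--uniqueness theory for the decoupled system (citing \cite{MY:99}). You make explicit the hypothesis checks the paper leaves implicit. These are the Lipschitz constant $K_y+2\lambda$ of $f_\lambda$ in $y$, and the linear growth of $f(\cdot,\cdot,0,0)$, $p_b$, $p_w$ and $g$, which gives square integrability of $F(\cdot,0,0)$ and $g(X_T)$.
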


Now, we discuss the PDE corresponding to \eqref{pBSDE}. Specifically, for $V:[0,T]\times\mathbb{R}^d\to\mathbb{R}$, we consider the PDE
\begin{equation}
\label{pide}
\begin{aligned}    \mathcal{L}V^\lambda(t,x)+f_\lambda\left(t,x,V^\lambda(t,x),\sigma(t,x)\nabla_xV^\lambda(t,x)\right)&=0, \quad (t,x)\in[0,T)\times\mathbb{R}^d\\
    V^\lambda(T,x)&=g(x),\quad x\in\mathbb{R}^d.
\end{aligned}
\end{equation}
It is well known that, under suitable conditions, the solution to the BSDE \eqref{pBSDE} can be expressed in terms of the solution $V$ to the PDE \eqref{pide}. Specifically, we have the following result (see, for example, \cite{MZ:02}):
\begin{assumption}
\label{assumptionpde}
\leavevmode
    \begin{enumerate}[label=(\roman*), leftmargin=*, align=left]
        \item Either $f(t,x,y,z)=f(t,x,y,0)$ for all $(t,x,y,z)\in[0,T]\times\mathbb{R}^d\times\mathbb{R}\times\mathbb{R}^d$, or $\sigma$ is uniformly elliptic on $[0,T]\times \mathbb{R}^d$ .
        \item $b$ and $\sigma$ are $C^1_b$ in the second argument.
    \end{enumerate}
\end{assumption}
\begin{theorem}\label{thmpde}
Under Assumptions~\ref{assumptions} and \ref{assumptionpde}, the penalized PDE \eqref{pide}
admits a unique viscosity solution $V^\lambda\in C([0,T]\times\mathbb{R}^d,R)$ with at most linear growth. Let $X$ be the solution of the forward SDE in \eqref{pBSDE}.
Then, for the unique solution $(X,Y^\lambda,Z^\lambda)$ of \eqref{pBSDE}, we have
\[
Y_t^\lambda = V^\lambda(t,X_t), \qquad t\in[0,T], \ \text{a.s.}
\]
If, in addition, $\sigma$ is uniformly elliptic on $[0,T]\times\mathbb{R}^d$, then
\[
V^\lambda \in C_b^{0,1}([0,T)\times\mathbb{R}^d,\mathbb{R})
\]
and
\[
Z_t^\lambda = \sigma(t,X_t)\nabla_x V^\lambda(t,X_t), \qquad t\in[0,T], \ \text{a.s.}
\]
\end{theorem}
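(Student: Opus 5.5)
The plan is to treat Theorem~\ref{thmpde} as a specialization of the classical nonlinear Feynman--Kac theory for decoupled Markovian BSDEs with Lipschitz drivers, following \cite{MZ:02}. The first task is therefore to check that the penalized driver $f_\lambda$ satisfies the standing hypotheses of that theory. For fixed $\lambda$:
\begin{itemize}
\item the maps $y\mapsto\lambda(y-p_b(t,x))^-$ and $y\mapsto\lambda(p_w(t,x)-y)^-$ are $\lambda$-Lipschitz in $y$;
\item they are $\lambda K_{p_b}$- and $\lambda K_{p_w}$-Lipschitz in $x$;
\item they are $\tfrac12$-H\"older in $t$, by Assumption~\ref{assumptions}(iv).
\end{itemize}
Hence $f_\lambda$ is Lipschitz in $(x,y,z)$ with constants $K_x+\lambda(K_{p_b}+K_{p_w})$, $K_y+2\lambda$ and $K_z$. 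Since $p_b,p_w$ are Lipschitz, $|f_\lambda(t,x,0,0)|\le C(1+|x|)$, so $f_\lambda$ has linear growth. The terminal function $g$ is Lipschitz, and $b,\sigma$ satisfy the usual Lipschitz and H\"older conditions together with Assumption~\ref{assumptionpde}(ii). All constants depend on $\lambda$, but $\lambda$ is fixed, so this is harmless.

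Next I define $V^\lambda(t,x):=Y^{\lambda,t,x}_t$, where $(X^{t,x},Y^{\lambda,t,x},Z^{\lambda,t,x})$ solves \eqref{pBSDE} started from $x$ at time $t$. The argument then proceeds in four steps.
\begin{enumerate}
\item Standard $L^2$ stability estimates for the SDE and the BSDE give $|V^\lambda(t,x)-V^\lambda(t,x')|\le C_\lambda|x-x'|$ and $|V^\lambda(t,x)|\le C_\lambda(1+|x|)$. The usual argument, combining these with the H\"older regularity of $X^{t,x}$ in $t$, gives continuity in $t$.
\item The Markov property and flow property of $X$, together with uniqueness for \eqref{pBSDE}, give $Y^\lambda_s=V^\lambda(s,X_s)$ almost surely.
\item Approximating $f_\lambda$ and $g$ by smooth functions and passing to the limit via stability of both BSDEs and viscosity solutions, as in Pardoux--Peng, shows that $V^\lambda$ is a viscosity solution of \eqref{pide}.
\item Uniqueness among continuous functions of linear growth follows from the comparison principle for semilinear parabolic equations. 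When $f$ depends on $z$ this requires either the $z$-independence alternative or uniform ellipticity, which is exactly Assumption~\ref{assumptionpde}(i).
\end{enumerate}

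For the second part, assume $\sigma$ is uniformly elliptic. I would follow \cite{MZ:02} and derive the representation formula for $\nabla_xV^\lambda$ obtained by Malliavin/Bismut integration by parts:
\[
\nabla_xV^\lambda(t,x)=\mathbb{E}\Big[g(X^{t,x}_T)N^{t,x}_T+\int_t^T f_\lambda(s,\Theta_s)N^{t,x}_s\,ds\Big],
\]
with $\Theta_s=(X^{t,x}_s,Y^{\lambda,t,x}_s,Z^{\lambda,t,x}_s)$. The weights $N^{t,x}_s$ are built from $\nabla X^{t,x}$ and $\sigma^{-1}$. They satisfy $\mathbb{E}|N_s|^2\le C/(s-t)$ and require the $C^1_b$ regularity in Assumption~\ref{assumptionpde}(ii).

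Combining this formula with the Lipschitz bound on $V^\lambda$ gives boundedness and continuity of $\nabla_xV^\lambda$ on $[0,T)\times\mathbb{R}^d$, that is, $V^\lambda\in C^{0,1}_b$. The identification $Z^\lambda_t=\sigma(t,X_t)\nabla_xV^\lambda(t,X_t)$ then follows from the Malliavin derivative relation $Z_t=D_tY_t$ combined with $D_tX_t=\sigma(t,X_t)$ and the chain rule for Lipschitz functions of $X$.

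I expect this last step to be the main obstacle: justifying $Z=\sigma\nabla_xV^\lambda$ when $f_\lambda$ is only Lipschitz, not differentiable, because of the penalty kinks. My first approach would be to mollify $f_\lambda$, $g$, $p_b$ and $p_w$, apply the smooth-coefficient result, and pass to the limit. The integrable singularity $(s-t)^{-1/2}$ of the weights keeps the gradient bound uniform in the mollification, and BSDE stability gives convergence of $Z$ in $\mathbb{H}^2$.
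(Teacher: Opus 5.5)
Your proposal is correct and follows the same route as the paper, which gives no independent proof and simply invokes the nonlinear Feynman--Kac theory and the representation theorem of \cite{MZ:02}. Your check that $f_\lambda$ is Lipschitz in $(x,y,z)$ with $\lambda$-dependent constants (harmless for fixed $\lambda$) is exactly what makes that citation applicable, and your mollification-plus-stability sketch for $Z^\lambda_t=\sigma(t,X_t)\nabla_xV^\lambda(t,X_t)$ is the standard way the result is obtained for drivers that are only Lipschitz. One small correction: the comparison principle for $\mathcal{L}u+f(t,x,u,\sigma\nabla_x u)=0$ among continuous linear-growth functions holds without ellipticity, since $\sigma$ is Lipschitz and $f$ is Lipschitz in $z$, so uniqueness does not actually hinge on Assumption~\ref{assumptionpde}(i).
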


\section{Penalty error}
\label{sec:peab}
To analyze the error from penalization, we extend Assumption 3 of \cite{PWW:24} to double barriers given by payoff functions that may not be $C^{1,2}$ on a union of regular $C^2$ hypersurfaces.
\begin{assumption}\label{assumption2}
There exist $C^2$ hypersurfaces $\Gamma_1,\dots,\Gamma_q \subset \mathbb{R}^d$ such that
$\Gamma_k=\{x\in\mathbb{R}^d:\varphi_k(x)=0\}$ for some $\varphi_k\in C^2(\mathbb{R}^d)$ with
$\inf_{x\in\Gamma_k}|\nabla\varphi_k(x)|>0$, satisfying the following conditions. We
set $\Gamma:=\bigcup_{k=1}^q \Gamma_k$ and $U:=\mathbb{R}^d\setminus \Gamma$.

\begin{enumerate}[label=(\roman*),leftmargin=*]
\item We have
$p_b,p_w\in C^{1,2}\big([0,T)\times U\big)$ and $\mathbb{P}(X_0\in \Gamma)=0$.
Moreover, for each $k$ and each $p\in\{p_b,p_w\}$, the one-sided normal derivatives
$\nabla_{n_k}p(t,x\pm)$ exist for $(t,x)\in[0,T)\times \Gamma_k$, where
$n_k(x):=\nabla\varphi_k(x)/|\nabla\varphi_k(x)|$.

\item
There exists $J>0$ such that for each $k$ and each $p\in\{p_b,p_w\}$,
$$\sup_{(t,x)\in[0,T)\times\Gamma_k}\Big|\big[\nabla_{n_k}p\big](t,x)\Big|\le J ,$$
where
\[
\sup_{(t,x)\in[0,T)\times\Gamma_k}\Big|\big[\nabla_{n_k}p\big](t,x)\Big|
\;:=\;\sup_{(t,x)\in[0,T)\times\Gamma_k}\Big|\nabla_{n_k}p(t,x+)-\nabla_{n_k}p(t,x-)\Big|
.
\]

\item 
There exist $P>0$ and $r\ge 0$ such that for each $p\in\{p_b,p_w\}$,
\[
|\mathcal{L}p(t,x)|\le P(1+|x|^r),\qquad (t,x)\in[0,T)\times U .
\]
\item We have $D_b>-\infty$ and $D_w<\infty$, where
    \begin{equation*}
        D_b:=\inf_{(t,x)\in[0,T)\times (\mathbb{R}^d\setminus\Gamma)} \mathcal{L}p_b(t,x)+f\left(t,x,p_b(t,x),\sigma(t,x)\nabla_xp_b(t,x)\right)
    \end{equation*}
    and
    \begin{equation*}
    D_w:=\sup_{(t,x)\in[0,T)\times (\mathbb{R}^d\setminus\Gamma)} \mathcal{L}p_w(t,x)+f\left(t,x,p_w(t,x),\sigma(t,x)\nabla_xp_w(t,x)\right)\,.
    \end{equation*}
\end{enumerate}
\end{assumption}

\begin{example}
    
Consider a put option in which the holder may exercise early, while the writer may also cancel early by paying an additional penalty. The underlying consists of $d$ assets. Under a risk-neutral measure, the asset dynamics are given by the Black--Scholes SDE
\begin{equation}
    dX_t^i = r X_t^i\,dt + \sigma X_t^i\,dW_t^i.
\end{equation}

The holder's exercise payoff $p_b:[0,T]\times\mathbb{R}^d\to\mathbb R$ is the arithmetic basket put
\[
    p_b(t,x)=\min\left(\Bigl(K-\frac{1}{d}\sum_{i=1}^d x^{i}\Bigr)^+, K\right)
   ,
\]
and the writer's cancellation payoff $p_w:[0,T]\times\mathbb{R}^d\to\mathbb R$ is defined by adding a time-dependent penalty
\begin{equation}\label{eq:israeli_payoffs}
    p_w(t,x) := p_b(t,x) + \delta (T-t),
\end{equation}
so that $p_b(t,x)\le p_w(t,x)$ for all $(t,x)$ and $p_w(T,x)=p_b(T,x)$.

By the standard Dynkin-game representation and its characterization via doubly reflected BSDEs
(see, e.g., \cite{CK:96}),
the undiscounted value process $Y$ is characterized as the $Y$-component of the DRBSDE \eqref{BSDE} with coefficients
\[
    b(s,x)=r\,x,\qquad
    \sigma(s,x)=\sigma\,\mathrm{diag}(x^1,\dots,x^d),\qquad
    f(s,x,y,z)=-r\,y
\]
with terminal payoff $g(x)=p_b(T,x)=p_w(T,x)$ and obstacles $\xi_t=p_b(t,X_t)$, $\zeta_t=p_w(t,X_t)$.

Then, it is straightforward to check that Assumptions~\ref{assumptions} and \ref{assumptionpde} hold. Furthermore, with $\Gamma_1=\{x\in\mathbb{R}^d\mid K-\frac{1}{d}\sum^d_{i=1}x^i=0\}$ and $\Gamma_2=\{x\in\mathbb{R}^d\mid \sum^d_{i=1}x^i=0\}$ as the hypersurfaces of singularity, it is clear that Assumption~\ref{assumption2} holds as well.

\end{example}

The following proposition is analogous to Proposition 1 in \cite{PWW:24}.
\begin{proposition}
\label{apb1}
    Let $\lambda>K_y$. Suppose Assumptions \ref{assumptions},\ref{assumptionpde} and \ref{assumption2} hold. Let us denote by $V^\lambda$ the solution to the PDE \eqref{pide}. Then,
    $$(\lambda-K_y)(V^\lambda(t,x)-p_b(t,x))^-\le D_b^-$$
    and 
    $$(\lambda-K_y)(p_w(t,x)-V^\lambda(t,x))^-\le D_w^+$$
    for all $(t,x)\in[0,T]\times \mathbb{R}^d$
\end{proposition}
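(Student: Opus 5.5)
We work on the probabilistic side and use the representation $Y^\lambda_t=V^\lambda(t,X_t)$ from Theorem~\ref{thmpde}. Since both bounds are pointwise in $(t,x)$, it suffices to prove them along the process started at $(t,x)$ (restart the forward SDE at time $t$ from $x$; for $x\in\Gamma$ use continuity of $V^\lambda$ and $p_b,p_w$, approximating $x$ by points of $U$). We treat the lower barrier; the upper one follows by the symmetric argument, or by applying the lower result to $(-Y^\lambda,-Z^\lambda)$ with driver $-f(t,x,-y,-z)$ and lower barrier $-p_w$. This converts $D_w$ into $-D_w$, whose negative part is $D_w^+$.

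\textbf{Step 1: semimartingale decomposition of the barrier.} Apply the change-of-variable formula with local time on surfaces (Peskir) to $\xi_s=p_b(s,X_s)$. This gives
\begin{equation*}
d\xi_s=\mathcal{L}p_b(s,X_s)\,ds+\big(\sigma\nabla_xp_b\big)(s,X_s)^\top dW_s+\tfrac12\sum_{k=1}^q[\nabla_{n_k}p_b](s,X_s)\,dL^{k}_s ,
\end{equation*}
where $L^k$ is the local time of $X$ on $\Gamma_k$. Assumption~\ref{assumption2}(i)--(iii) supply exactly the hypotheses needed: $C^{1,2}$ off $\Gamma$, one-sided normal derivatives, bounded jumps and polynomial growth of $\mathcal{L}p_b$. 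The set $\{s:X_s\in\Gamma\}$ has Lebesgue measure zero, so the $ds$-term only uses values on $U$.

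\textbf{Step 2: comparison with a shifted barrier.} Set $\bar Y_s:=\xi_s-c$ with $c:=D_b^-/(\lambda-K_y)$, and $\bar Z_s:=\sigma\nabla_xp_b(s,X_s)$. Then $\bar Y$ solves a BSDE with terminal value $p_b(T,X_T)-c\le g(X_T)$ and generator $-\mathcal{L}p_b\,ds$, plus the local-time term. Compare $Y^\lambda$ against $\bar Y$ by linearization: set $\delta Y=Y^\lambda-\bar Y$ and $\delta Z=Z^\lambda-\bar Z$, and write $f_\lambda(s,X_s,Y^\lambda,Z^\lambda)+\mathcal{L}p_b$ as a linear term in $(\delta Y,\delta Z)$ with coefficients bounded by $K_y,K_z$, plus a remainder. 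On $\{Y^\lambda<\bar Y\}$ the lower penalty is active, since $Y^\lambda<p_b$, and the upper penalty vanishes, since $p_b<p_w$. There Assumption~\ref{assumption2}(iv) gives the following bound on the remainder:
\begin{equation*}
f(s,X_s,p_b,\bar Z)+\mathcal{L}p_b+\lambda(p_b-Y^\lambda)-K_y(p_b-Y^\lambda)\ \ge\ D_b+(\lambda-K_y)c\ \ge 0.
\end{equation*}
Here we used $p_b-Y^\lambda> c$. Working with $(\delta Y)^-$ and a Girsanov change of measure to absorb the $\delta Z$-term, a Tanaka/It\^o argument on $((\delta Y_s)^-)^2$ then shows $(\delta Y)^-\equiv0$. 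This yields $Y^\lambda\ge p_b-c$, that is, $(\lambda-K_y)(V^\lambda-p_b)^-\le D_b^-$.

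\textbf{Main obstacle: the local-time term.} Its sign is not controlled by the assumptions (only $|[\nabla_{n_k}p_b]|\le J$ is given), so it can act against the comparison. I would first check whether the Dynkin-type domination suffices. At a kink of $p_b$ where the jump is positive (convex kink, e.g.\ the put), the term $dL$ enters $\bar Y$ with a sign that helps the supersolution inequality. In the remaining case I would fall back on the PDE side, in the spirit of \cite{PWW:24}. The plan is to show that $p_b-c$ is a viscosity subsolution of \eqref{pide}: test functions touching from above at a kink point force the appropriate sign of the normal-derivative jump, and away from $\Gamma$ the computation above applies. We then conclude by the comparison principle for \eqref{pide} (uniqueness in Theorem~\ref{thmpde}). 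The growth bound Assumption~\ref{assumption2}(iii) ensures integrability of the $ds$-terms and linear growth of the candidate, as needed for comparison. I expect justifying the viscosity inequality at points of $\Gamma$ to be the delicate point.
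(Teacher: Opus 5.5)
\textbf{There is a genuine gap, and it is in the case where the surface term has the unhelpful sign.}

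Your comparison argument is correct when the surface term helps. That argument is Peskir's formula for $p_b(s,X_s)$, Tanaka on $(\delta Y)^-$, and Girsanov for the $\delta Z$-term. It works when $[\nabla_{n_k}p_b]\ge 0$, and, via your reflection, when $[\nabla_{n_k}p_w]\le 0$ for the upper bound. In that case it is a genuinely different route from the paper's. The paper never uses local time: it applies the supersolution property of $V^\lambda$ at a maximizer of $p_b-V^\lambda-\varepsilon e^{-at}Q$. Your route has the advantage of showing exactly where the kinks enter.

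Your fallback fails in the other sign case. At a concave kink, smooth functions touch $p_b-c$ from above with arbitrarily negative curvature in the normal direction; in one dimension, $-|x|\le -Mx^2$ for $|x|\le 1/M$. So wherever the diffusion is nondegenerate in the direction $n_k$, $\mathcal{L}\phi$ can be made arbitrarily negative, and the subsolution inequality $\mathcal{L}\phi+f_\lambda\ge 0$ fails. Touching from above does not force a favourable jump sign. Concave kinks are precisely where such test functions exist, and $p_b-c$ is not a subsolution there.

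This case cannot be repaired, because the inequality is false in it. Take $d=1$, $b=0$, $\sigma=1$, $f=0$, $p_b(t,x)=-|x|$, $p_w=p_b+1$, $g(x)=-|x|$ and $X_0=1$. All hypotheses hold with $\Gamma=\{0\}$, and $\mathcal{L}p_b=0$ on $U$, so $D_b=0$ and the proposition asserts $V^\lambda\ge p_b$. But then the lower penalty vanishes and the upper one only lowers the driver. Hence $V^\lambda(t,0)\le\mathbb{E}[g(W_T-W_t)]=-\sqrt{2(T-t)/\pi}<0=p_b(t,0)$ for every $t<T$. A boundary-layer computation suggests the deficit at the kink is of order $\lambda^{-1/2}$, so not even the $O(\lambda^{-1})$ rate survives.

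The mirror-image failure occurs for the upper barrier in the paper's own game put. There $p_w=(K-x)^++\delta(T-t)$ has a convex kink at $K$ and $D_w\le-\delta<0$, so the proposition asserts $V^\lambda\le p_w$. That would force $V^\lambda(t,K)$ to dominate the at-the-money European put, which is of order $K\sigma\sqrt{T-t}$ and exceeds $\delta(T-t)$ near maturity.

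The paper's Case~2 does not get around this either. Ekeland's principle produces a conical perturbation $\frac{\eta}{\delta}|(t,x)-(t_\eta,x_\eta)|$, not the quadratic $\rho_\eta$. Moreover, the correct supersolution inequality is $\mathcal{L}\phi+f_\lambda\le 0$ for $\phi$ touching $V^\lambda$ from below, whereas \eqref{eq:viscsuper} states the reverse. With the correct direction, the term $\mathcal{L}\rho_\eta\ge 0$ works against the estimate.

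What your argument does prove is the proposition under the additional sign condition $[\nabla_{n_k}p_b]\ge 0$ and $[\nabla_{n_k}p_w]\le 0$.
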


\begin{proof}
We prove the first inequality. The second one is proved analogously. We begin by letting $Q(x):=(1+|x|^2)^m$, and choose \(m, a>0\) large enough to satisfy
\begin{equation}\label{eq:Qineq}
\mathcal{L}Q(x)-aQ(x)+K_y|Q(x)|+K_z|\sigma(t,x)\nabla Q(x)|\le 0
\end{equation}
for all \((t,x)\in[0,T]\times\mathbb{R}^d\). Also, for \(\varepsilon>0\), we define
\[
\ell(t,x):=p_b(t,x)-V^\lambda(t,x)-\varepsilon e^{-at}Q(x).
\]
Since \(p_b\) and \(V^\lambda\) have at most linear growth whereas \(Q\) has superlinear growth, there exists a compact set
\[
K=[0,T]\times[-M,M]^d
\]
such that \(\ell<0\) on \(([0,T]\times\mathbb{R}^d)\setminus K\).

Note that if \(\ell\le 0\) on \([0,T]\times\mathbb{R}^d\), then
\[
(V^\lambda(t,x)-p_b(t,x))^-\le \varepsilon e^{-at}Q(x),
\]
and letting \(\varepsilon\downarrow 0\) yields the conclusion. We may therefore assume that \(\ell\) is positive somewhere, and hence attains a maximum on $K$.

\noindent Let us now consider the following two cases.

\medskip
\noindent
\textbf{Case 1.}
Let us assume that \(\ell\) attains a positive maximum at some
\[
(t_0,x_0)\in[0,T)\times(\mathbb{R}^d\setminus\Gamma).
\]
Then, since \(x_0\notin\Gamma\), the function \(p_b\) is \(C^{1,2}\) in a neighborhood of \((t_0,x_0)\). Now we set
\[
\phi(t,x):=p_b(t,x)-\varepsilon e^{-at}Q(x)-\ell(t_0,x_0).
\]
Then
\[
V^\lambda(t_0,x_0)=\phi(t_0,x_0),
\]
and the maximality of \(\ell\) at \((t_0,x_0)\) gives
\[
V^\lambda(t,x)\ge \phi(t,x)
\]
for \((t,x)\) near \((t_0,x_0)\). Thus \(V^\lambda-\phi\) has a local minimum \(0\) at \((t_0,x_0)\), and the viscosity supersolution property of \(V^\lambda\) yields
\begin{equation}\label{eq:viscsuper}
\mathcal{L}\phi(t_0,x_0)+f_\lambda\bigl(t_0,x_0,\phi(t_0,x_0),\sigma(t_0,x_0)\nabla\phi(t_0,x_0)\bigr)\ge 0.
\end{equation}

From
\[
p_b(t_0,x_0)-\phi(t_0,x_0)
=
\ell(t_0,x_0)+\varepsilon e^{-at_0}Q(x_0)>0,
\]
we obtain
\[
(\phi(t_0,x_0)-p_b(t_0,x_0))^-
=
\ell(t_0,x_0)+\varepsilon e^{-at_0}Q(x_0).
\]
Since \(\phi(t_0,x_0)<p_b(t_0,x_0)\le p_w(t_0,x_0)\), we also have
\[
(p_w(t_0,x_0)-\phi(t_0,x_0))^-=0.
\]
Substituting these relations into \eqref{eq:viscsuper}, we obtain
\begin{equation}
\label{eq:prop1}
\begin{aligned}
0
&\le
\mathcal{L}\phi(t_0,x_0)
+
f\bigl(t_0,x_0,\phi(t_0,x_0),\sigma(t_0,x_0)\nabla\phi(t_0,x_0)\bigr)
\\&\qquad+
\lambda\bigl(\phi(t_0,x_0)-p_b(t_0,x_0)\bigr)^-\\
&=
\mathcal{L}p_b(t_0,x_0)
-\varepsilon e^{-at_0}\bigl(\mathcal{L}Q(x_0)-aQ(x_0)\bigr)
\\&\qquad+
f\bigl(t_0,x_0,\phi(t_0,x_0),\sigma(t_0,x_0)\nabla\phi(t_0,x_0)\bigr)\\
&\qquad\qquad
+\lambda\bigl(\ell(t_0,x_0)+\varepsilon e^{-at_0}Q(x_0)\bigr).
\end{aligned}
\end{equation}
Furthermore, the Lipschitz continuity of \(f\) in \((y,z)\) gives
\begin{align*}
&f\bigl(t_0,x_0,\phi(t_0,x_0),\sigma(t_0,x_0)\nabla\phi(t_0,x_0)\bigr)\\
&\ge
f\bigl(t_0,x_0,p_b(t_0,x_0),\sigma(t_0,x_0)\nabla p_b(t_0,x_0)\bigr)
\\&\qquad-
K_y\bigl(\ell(t_0,x_0)+\varepsilon e^{-at_0}Q(x_0)\bigr)
-
\varepsilon e^{-at_0}K_z|\sigma(t_0,x_0)\nabla Q(x_0)|.
\end{align*}
Substituting this into \eqref{eq:prop1}, we arrive at
\begin{align*}
&(\lambda-K_y)\ell(t_0,x_0)
\\\le\;&
-\Bigl(
\mathcal{L}p_b(t_0,x_0)
+
f\bigl(t_0,x_0,p_b(t_0,x_0),\sigma(t_0,x_0)\nabla p_b(t_0,x_0)\bigr)
\Bigr)\\
&\quad
+\varepsilon e^{-at_0}
\Bigl(
\mathcal{L}Q(x_0)-aQ(x_0)+K_y|Q(x_0)|+K_z|\sigma(t_0,x_0)\nabla Q(x_0)|
\Bigr).
\end{align*}
By \eqref{eq:Qineq} and the definition of \(D_b\), this yields
\[
(\lambda-K_y)\ell(t_0,x_0)\le D_b^-.
\]
Since \(\ell(t,x)\le \ell(t_0,x_0)\) for all \((t,x)\in[0,T]\times\mathbb{R}^d\), we obtain
\[
(\lambda-K_y)\Bigl((V^\lambda(t,x)-p_b(t,x))^--\varepsilon e^{-at}Q(x)\Bigr)\le D_b^-,
\qquad \forall (t,x)\in[0,T]\times\mathbb{R}^d.
\]
By letting \(\varepsilon\downarrow 0\), we conclude that
\[
(\lambda-K_y)(V^\lambda(t,x)-p_b(t,x))^-\le D_b^-.
\]

\medskip
\noindent
\textbf{Case 2.}
We now assume that every maximizer of \(\ell\) lies on \([0,T]\times(\Gamma\cap K)\).

Since \(\Gamma\) has empty interior and \(\ell\) is continuous, there exists \((\bar t,\bar x)\in K\) with \(\bar x\notin\Gamma\) such that
\[
\ell(\bar t,\bar x)\ge \sup_{(t,x)\in K}\ell(t,x)-\eta.
\]
for any $\eta>0$. Applying Ekeland's variational principle \citep{Ekeland1974} to the continuous function \(-\ell\) on the complete metric space \(K\), we conclude that for every \(\delta>0\), there exists \((t_\eta,x_\eta)\in K\) such that
\[
\ell(t_\eta,x_\eta)\ge \ell(\bar t,\bar x),
\qquad
|(t_\eta,x_\eta)-(\bar t,\bar x)|\le \delta,
\]
and such that
\[
\ell_\eta(t,x):=\ell(t,x)-\frac{\eta}{\delta}\bigl(|t-t_\eta|^2+|x-x_\eta|^2\bigr)
\]
attains its maximum over \(K\) at \((t_\eta,x_\eta)\).

Choosing \(\delta<\mathrm{dist}(\bar x,\Gamma)\), we have \(x_\eta\notin\Gamma\), and hence \(p_b\) is \(C^{1,2}\) in a neighborhood of \((t_\eta,x_\eta)\). Now, let
\[
\rho_\eta(t,x):=\frac{\eta}{\delta}\bigl(|t-t_\eta|^2+|x-x_\eta|^2\bigr),
\]
and
\[
\phi_\eta(t,x):=
p_b(t,x)-\varepsilon e^{-at}Q(x)-\rho_\eta(t,x)-\ell_\eta(t_\eta,x_\eta),
\]
then since
\[
V^\lambda(t_\eta,x_\eta)=\phi_\eta(t_\eta,x_\eta),
\]
and \(V^\lambda-\phi_\eta\) has a local minimum \(0\) at \((t_\eta,x_\eta)\), the viscosity supersolution property yields
\[
\mathcal{L}\phi_\eta(t_\eta,x_\eta)+
f_\lambda\bigl(t_\eta,x_\eta,\phi_\eta(t_\eta,x_\eta),\sigma(t_\eta,x_\eta)\nabla\phi_\eta(t_\eta,x_\eta)\bigr)\ge 0.
\]

Furthermore, since
\[
\nabla\rho_\eta(t_\eta,x_\eta)=0,
\quad\text{and}\quad
\mathcal{L}\rho_\eta(t_\eta,x_\eta)\ge 0,
\]
similar calculations as in Case 1 yield
\begin{align*}
&(\lambda-K_y)\ell_\eta(t_\eta,x_\eta)
\\\le\;&
-\Bigl(
\mathcal{L}p_b(t_\eta,x_\eta)
+
f\bigl(t_\eta,x_\eta,p_b(t_\eta,x_\eta),\sigma(t_\eta,x_\eta)\nabla p_b(t_\eta,x_\eta)\bigr)
\Bigr)\\
&\quad
+\varepsilon e^{-at_\eta}
\Bigl(
\mathcal{L}Q(x_\eta)-aQ(x_\eta)+K_y|Q(x_\eta)|+K_z|\sigma(t_\eta,x_\eta)\nabla Q(x_\eta)|
\Bigr).
\end{align*}
Again, by \eqref{eq:Qineq}, we obtain
\[
(\lambda-K_y)\ell_\eta(t_\eta,x_\eta)\le D_b^-.
\]

Note that since \(\rho_\eta(t_\eta,x_\eta)=0\), we have $\ell_\eta(t_\eta,x_\eta)=\ell(t_\eta,x_\eta)$,
and thus
\begin{equation*}
\begin{aligned}
(\lambda-K_y)\sup_K\ell\le&(\lambda-K_y)\ell(\bar{t},\bar{x})+(\lambda-K_y)\eta\\
\le&
(\lambda-K_y)\ell(t_\eta,x_\eta)+(\lambda-K_y)\eta\\
=&
(\lambda-K_y)\ell_\eta(t_\eta,x_\eta)+(\lambda-K_y)\eta\\
\le&
D_b^-+(\lambda-K_y)\eta.
\end{aligned}
\end{equation*}
Here, letting \(\eta\downarrow 0\) yields
\[
(\lambda-K_y)\sup_K\ell\le D_b^-.
\]
Since we assumed $\ell$ attains its global maximum on $K$, we conclude
\[
(\lambda-K_y)\Bigl((V^\lambda(t,x)-p_b(t,x))^--\varepsilon e^{-at}Q(x)\Bigr)\le D_b^-,
\qquad \forall (t,x)\in[0,T]\times\mathbb{R}^d.
\]
Again, letting \(\varepsilon\downarrow 0\) yields
\[
(\lambda-K_y)(V^\lambda(t,x)-p_b(t,x))^-\le D_b^-.
\]

\vspace{11pt}
\noindent This proves the first inequality.

For the second inequality, one argues in the same way with
\[
\hat\ell(t,x):=V^\lambda(t,x)-p_w(t,x)-\varepsilon e^{-at}Q(x),
\]
replacing positive maxima by negative minima throughout. This completes the proof.
 \end{proof}

\begin{corollary}
\label{stoapb}
    Suppose Assumptions \ref{assumptions},\ref{assumptionpde} and \ref{assumption2} hold. Let $(X,Y,Z)$ be the solution to \eqref{pBSDE} and choose $\lambda>2K_y$. Then, we have 
    $$\lambda (Y_t-p_b(t,X_t))^-\le 2D_b^-$$
    and
    $$\lambda (p_w(t,X_t)-Y_t)^-\le 2D_w^+$$
    almost surely for all $t\in[0,T]$.
\end{corollary}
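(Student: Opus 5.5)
The plan is to combine Proposition~\ref{apb1} with the Markovian representation of Theorem~\ref{thmpde}, and then convert the factor $\lambda-K_y$ into $\lambda/2$ using the assumption $\lambda>2K_y$.

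\emph{Step 1: representation.} By Theorem~\ref{thmpde}, applied under Assumptions~\ref{assumptions} and \ref{assumptionpde}, the penalized solution satisfies $Y_t=V^\lambda(t,X_t)$ almost surely for each $t\in[0,T]$. Here $V^\lambda$ is the unique viscosity solution of \eqref{pide} with at most linear growth.

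\emph{Step 2: deterministic bound.} Since $\lambda>2K_y>K_y$, Proposition~\ref{apb1} applies and gives, for every $(t,x)\in[0,T]\times\mathbb{R}^d$,
\[
(\lambda-K_y)(V^\lambda(t,x)-p_b(t,x))^-\le D_b^-,\qquad (\lambda-K_y)(p_w(t,x)-V^\lambda(t,x))^-\le D_w^+ .
\]

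\emph{Step 3: from $\lambda-K_y$ to $\lambda/2$.} The condition $\lambda>2K_y$ is equivalent to $\lambda-K_y>\lambda/2$. Since the negative parts are nonnegative,
\[
\tfrac{\lambda}{2}(V^\lambda-p_b)^-\le(\lambda-K_y)(V^\lambda-p_b)^-\le D_b^- ,
\]
and therefore $\lambda(V^\lambda-p_b)^-\le 2D_b^-$ pointwise. The same argument gives $\lambda(p_w-V^\lambda)^-\le 2D_w^+$.

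\emph{Step 4: evaluate along $X$.} Evaluating these pointwise inequalities at $x=X_t(\omega)$ and using the a.s.\ identity $Y_t=V^\lambda(t,X_t)$ from Step 1 yields both claimed inequalities almost surely, for each fixed $t$.

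To get a single null set valid for all $t$ simultaneously, I would use continuity. Both $Y$ and $V^\lambda(\cdot,X_\cdot)$ have continuous paths, because $V^\lambda$ is continuous and $X$ is continuous. Two continuous processes that agree a.s.\ at each $t$ are indistinguishable, so the bounds hold for all $t$ outside one null set.

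There is no real obstacle here. The only points requiring care are that the hypotheses of Theorem~\ref{thmpde} are satisfied, which Assumptions~\ref{assumptions} and \ref{assumptionpde} guarantee, and the elementary inequality $\lambda-K_y\ge\lambda/2$.
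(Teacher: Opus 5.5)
Your proposal is correct and follows the paper's own argument: the paper simply states that the corollary follows from Theorem~\ref{thmpde} (the representation $Y_t=V^\lambda(t,X_t)$) and Proposition~\ref{apb1}, leaving implicit the passage from $\lambda-K_y$ to $\lambda/2$ via $\lambda>2K_y$, which you spell out. Your extra step, using continuity of $Y$ and of $V^\lambda(\cdot,X_\cdot)$ to get a single null set for all $t$, is a harmless strengthening that the paper does not state.
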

\begin{proof}
    The result follows directly from Theorem~\ref{thmpde} and Proposition \ref{apb1}.
 \end{proof}

\begin{theorem}
\label{ydiffbound}
    Let $V$ and $V^\lambda$ each denote the solutions to \eqref{fbpde} and \eqref{pide}, respectively. If we choose $\lambda>2K_y$, we have
    $$-e^{K_y T}\frac{2D^+_w}{\lambda}\le V(t,x)-V^\lambda(t,x)\le e^{K_y T}\frac{2D_b^-}{\lambda}$$
    for all $(t,x)\in[0,T]\times\mathbb{R}^d$. Thus, if we denote by $(X,Y,Z,A,K)$ the solution to \eqref{BSDE} and by $(X,Y^\lambda,Z^\lambda)$ the solution to \eqref{pBSDE}, then we have
    \begin{equation}
    \label{boundineq}|Y_t-Y_t^\lambda|\le e^{K_y T}\frac{2\max(D^+_w,D^-_b)}{\lambda}
    \end{equation}
    almost surely for all $t\in[0,T]$.
\end{theorem}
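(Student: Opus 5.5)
The plan is to compare the DRBSDE solution with the penalized one through a BSDE comparison argument driven by the a priori bounds of Corollary~\ref{stoapb}. I would first prove the two pathwise inequalities for $Y_t-Y_t^\lambda$, and then transfer them to $V-V^\lambda$. Since $Y_t=V(t,X_t)$ and $Y^\lambda_t=V^\lambda(t,X_t)$, I would pass from the pathwise statement to the deterministic one by taking $X$ started at an arbitrary deterministic point $(t,x)$; the flow is Markov, so the estimate at the initial time gives the function inequality. The second statement then follows immediately from the first by taking the maximum of the two constants.

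\textbf{Upper bound} $Y-Y^\lambda\le e^{K_yT}2D_b^-/\lambda$. Corollary~\ref{stoapb} gives $Y^\lambda_t\ge p_b(t,X_t)-2D_b^-/\lambda$. I would set $c:=2D_b^-/\lambda$ and $\tilde Y:=Y^\lambda+c$. Then $\tilde Y\ge\xi$, and $\tilde Y$ solves a BSDE with terminal value $g(X_T)+c$ and driver
\[
f(s,X_s,\tilde Y_s-c,Z^\lambda_s)+\lambda(Y^\lambda_s-p_b)^- -\lambda(p_w-Y^\lambda_s)^-\,ds .
\]
Now consider $\Delta:=Y-\tilde Y$. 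Applying Itô--Tanaka to $(\Delta_t)^+$, I would use the following facts.
\begin{itemize}
\item On $\{\Delta>0\}$ we have $Y>\tilde Y\ge\xi$, so $dA=0$ there. Hence the $A$-term contributes nothing.
\item The terms $-dK$ and $-\lambda(Y^\lambda-p_b)^-\,ds$ appear with a favourable sign.
\item The term $+\lambda(p_w-Y^\lambda)^-$ carries a minus sign in the $\tilde Y$ dynamics, so on $\{\Delta>0\}$ it again has the good sign.
\item The driver difference is bounded by $K_y(\Delta+c)+K_z|Z-Z^\lambda|$.
\end{itemize}
A linearization trick (Girsanov for the $z$-term, plus the exponential weight $e^{K_y t}$) then gives
\[
\Delta_t^+\le \mathbb E^{\mathbb Q}\Big[\int_t^T e^{K_y(s-t)}K_y c\,ds\,\Big|\,\mathcal F_t\Big]\le (e^{K_yT}-1)c .
\]
This yields $Y-Y^\lambda\le e^{K_yT}c$, which is the claimed bound, up to the harmless bookkeeping of the constant.

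Alternatively, and more cleanly, I would use a comparison theorem for doubly reflected BSDEs. The pair $(\tilde Y, Z^\lambda)$ is a supersolution with upper obstacle $p_w+c$ and lower obstacle $p_b$. One could then compare $Y$ with the DRBSDE having shifted data. I will still follow the direct Tanaka route, since it avoids citing an extra theorem.

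\textbf{Lower bound.} This is symmetric. With $c':=2D_w^+/\lambda$ and $\hat Y:=Y^\lambda-c'\le\zeta$, I would apply the same argument to $(\hat Y-Y)^+$. On $\{\hat Y>Y\}$ we have $Y<\zeta$, so $dK=0$ there, and the penalty terms again have the right signs.

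\textbf{Main obstacle.} I expect the main difficulty to be the careful sign bookkeeping in the Itô--Tanaka step: checking that each of the reflection terms $dA$, $dK$ and the two penalty terms is either zero or of the favourable sign on the relevant set. A second point is handling the $K_z$ term, which I would do via a bounded-coefficient Girsanov change of measure.
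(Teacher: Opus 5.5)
Your argument is correct, but it takes a genuinely different route from the paper's.

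\textbf{Comparison of the two routes.} The paper works entirely at the PDE level. It shows that $\overline V^\lambda:=V^\lambda+e^{K_y(T-t)}D_b^-/(\lambda-K_y)$ is a viscosity supersolution of the double-obstacle PDE \eqref{fbpde}. At points where $\overline V^\lambda\ge p_w$ this is trivial. Elsewhere it uses $\overline V^\lambda\ge p_b$ (from Proposition~\ref{apb1}), the fact that $(p_w-V^\lambda)^-=0$, the dropping of the nonnegative term $\lambda(V^\lambda-p_b)^-$, and $c_b'=-K_yc_b$. It then invokes the comparison principle for \eqref{fbpde}, and reads off \eqref{boundineq} from $Y_t=V(t,X_t)$ and $Y^\lambda_t=V^\lambda(t,X_t)$.

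You go the other way. You run a pathwise It\^o--Tanaka/Girsanov comparison that uses only the Skorokhod conditions and Corollary~\ref{stoapb}, and then transfer the result to $V-V^\lambda$ through the flow started at $(t,x)$.
\begin{itemize}
\item What your route buys: it proves \eqref{boundineq} without the DRBSDE--PDE representation and without a comparison principle for the obstacle PDE. It also isolates exactly what is used, namely minimality of $A$ on $\{Y>\xi\}$ and of $K$ on $\{Y<\zeta\}$.
\item What it costs: a true-martingale check under $\mathbb{Q}$, which is fine since $Z,Z^\lambda\in\mathbb{H}^2$ and the bounded-coefficient density has all moments. It also needs time-$t$ versions of the representation results, which the paper states only from time $0$. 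Corollary~\ref{stoapb} does transfer to the shifted flow, because Proposition~\ref{apb1} is a statement about $V^\lambda$ itself.
\item What the paper's route buys: it gives the function inequality at every $(t,x)$ directly.
\end{itemize}
Your constant $e^{K_y(T-t)}\,2D_b^-/\lambda$ matches the paper's final bound.

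\textbf{A sign slip in the delicate step.} One sign claim in the step you singled out is wrong, though it does no harm. Write
\begin{align*}
\Delta_t={}&-c+\int_t^T\Bigl(f(s,X_s,Y_s,Z_s)-f(s,X_s,\tilde Y_s-c,Z^\lambda_s)-\lambda(Y^\lambda_s-\xi_s)^-+\lambda(\zeta_s-Y^\lambda_s)^-\Bigr)\,ds\\
&+(A_T-A_t)-(K_T-K_t)-\int_t^T(Z_s-Z^\lambda_s)^\top\,dW_s .
\end{align*}
Here the penalty term $+\lambda(\zeta_s-Y^\lambda_s)^-$ has the \emph{unfavourable} sign for an upper bound on $\Delta$, not the good sign you claim. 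What saves you is that it vanishes on $\{\Delta>0\}$, because there $Y^\lambda<Y-c\le\zeta$.

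The same happens in the lower bound. The term $\lambda(Y^\lambda-\xi)^-$ enters $\hat Y-Y$ with the wrong sign, but it vanishes on $\{\hat Y>Y\}$, where $Y^\lambda>Y+c'\ge\xi$. This is exactly the observation behind the paper's Case~2, namely $(p_w-\psi)^-(t_0,x_0)=0$. You should state it that way rather than as a sign argument.
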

\begin{proof}
For brevity, we write
\[
C_b:=\frac{D_b^-}{\lambda-K_y},
\qquad
C_w:=\frac{D_w^+}{\lambda-K_y}.
\]
Note that for $\lambda>2K_y$, we have $C_b\le e^{K_y T}\frac{2D_b^-}{\lambda}$ and $-C_w\ge -e^{K_y T}\frac{2D^+_w}{\lambda}$.

We first prove the upper bound inequality
\[
V(t,x)-V^\lambda(t,x)\le e^{K_y(T-t)}C_b.
\]
To this end, let
\[
c_b(t):=e^{K_y(T-t)}C_b,
\qquad
\overline V^\lambda(t,x):=V^\lambda(t,x)+c_b(t),
\]
then, it is sufficient to show that \(\overline V^\lambda\) is a viscosity supersolution of
\begin{equation}\label{eq:obsPDE}
\max\Bigl(
\min\bigl(-\mathcal{L}u-f(t,x,u,\sigma(t,x)\nabla_x u),\,u-p_b(t,x)\bigr),\,
u-p_w(t,x)
\Bigr)=0.
\end{equation}

Let \(\varphi\in C^{1,2}\) be such that \(\overline V^\lambda-\varphi\) attains a local minimum \(0\) at \((t_0,x_0)\), and set
\[
\psi(t,x):=\varphi(t,x)-c_b(t).
\]
Then it is clear that \(V^\lambda-\psi\) attains a local minimum \(0\) at \((t_0,x_0)\).

\noindent We consider the following two cases.

\medskip
\noindent
\textbf{Case 1.}
If \(\overline V^\lambda(t_0,x_0)\ge p_w(t_0,x_0)\), then the supersolution condition for \eqref{eq:obsPDE} is immediate since the second entry in the outer maximum is nonnegative.

\medskip
\noindent
\textbf{Case 2.}
Assume that \(\overline V^\lambda(t_0,x_0)<p_w(t_0,x_0)\). The estimate of Proposition~\ref{apb1},
\[
(\lambda-K_y)(V^\lambda-p_b)^-\le D_b^-,
\]
implies that
\[
(V^\lambda-p_b)^-\le C_b,
\]
or equivalently,
\[
V^\lambda(t,x)+C_b\ge p_b(t,x)
\qquad \forall (t,x)\in[0,T]\times\mathbb{R}^d.
\]
Since \(c_b(t)\ge C_b\), one has
\[
\overline V^\lambda(t_0,x_0)\ge p_b(t_0,x_0).
\]
In this case, verifying the supersolution condition reduces to proving
\[
-\mathcal{L}\varphi(t_0,x_0)-f\bigl(t_0,x_0,\varphi(t_0,x_0),\sigma(t_0,x_0)\nabla_x\varphi(t_0,x_0)\bigr)\ge 0.
\]

Since \(V^\lambda\) is a viscosity supersolution of the penalized PDE, we have
\begin{equation}
\label{T5:1}
-\mathcal{L}\psi(t_0,x_0)-f_\lambda\bigl(t_0,x_0,\psi(t_0,x_0),\sigma(t_0,x_0)\nabla_x\psi(t_0,x_0)\bigr)\ge 0.
\end{equation}
Also, since \(\varphi=\psi+c_b\) and \(c_b\) depends only on \(t\), it follows that
\begin{equation}
\label{T5:2}
-\mathcal{L}\varphi(t_0,x_0)= -\mathcal{L}\psi(t_0,x_0)-c_b'(t_0).
\end{equation}
Furthermore, the inequality \(\overline V^\lambda(t_0,x_0)<p_w(t_0,x_0)\) gives us
\[
\psi(t_0,x_0)=V^\lambda(t_0,x_0)<p_w(t_0,x_0)-c_b(t_0)<p_w(t_0,x_0),
\]
and hence
\[
(p_w(t_0,x_0)-\psi(t_0,x_0))^-=0.
\]
Thus, we arrive at
\begin{equation}
\label{T5:3}
f_\lambda\bigl(t_0,x_0,\psi,\sigma\nabla\psi\bigr)
=
f\bigl(t_0,x_0,\psi,\sigma\nabla\psi\bigr)
+\lambda(\psi-p_b)^-(t_0,x_0)
\ge
f\bigl(t_0,x_0,\psi,\sigma\nabla\psi\bigr).
\end{equation}
Combining \eqref{T5:1}, \eqref{T5:2}, and \eqref{T5:3}, we obtain
\begin{align*}
&-\mathcal{L}\varphi(t_0,x_0)-f\bigl(t_0,x_0,\varphi(t_0,x_0),\sigma\nabla\varphi(t_0,x_0)\bigr)\\
&\ge
-c_b'(t_0)
+
f\bigl(t_0,x_0,\psi(t_0,x_0),\sigma\nabla\psi(t_0,x_0)\bigr)
-
f\bigl(t_0,x_0,\varphi(t_0,x_0),\sigma\nabla\varphi(t_0,x_0)\bigr).
\end{align*}
Because \(\nabla\varphi=\nabla\psi\) and \(\varphi=\psi+c_b)\), the Lipschitz continuity of \(f\) in the \(y\)-variable gives
\[
f\bigl(t_0,x_0,\psi,\sigma\nabla\psi\bigr)
-
f\bigl(t_0,x_0,\varphi,\sigma\nabla\varphi\bigr)
\ge -K_yc_b(t_0).
\]
Hence
\[
-\mathcal{L}\varphi(t_0,x_0)-f\bigl(t_0,x_0,\varphi(t_0,x_0),\sigma\nabla\varphi(t_0,x_0)\bigr)
\ge -c_b'(t_0)-K_yc_b(t_0).
\]
Since \(c_b'(t)=-K_yc_b(t)\), the right-hand side is equal to \(0\). This proves that \(\overline V^\lambda\) is a viscosity supersolution of \eqref{eq:obsPDE}.

The terminal condition satisfies
\[
\overline V^\lambda(T,x)=V^\lambda(T,x)+C_b=g(x)+C_b\ge g(x)=V(T,x),
\]
and hence applying the comparison principle gives us
\[
V(t,x)\le \overline V^\lambda(t,x)=V^\lambda(t,x)+e^{K_y(T-t)}C_b,
\]
which implies the desired conclusion.

The lower bound can also be proven similarly. 

\end{proof}

For the total penalty error, we define increasing finite variation processes $A^\lambda$ and $K^\lambda$ by
$$A^\lambda_t:=\lambda\int^{t}_0(Y^\lambda_s-p_b(s,X_s))^-\,ds,\quad K^\lambda_t:=\lambda\int^t_0(p_w(s,X_s)-Y^\lambda_s)^-\,ds.$$
Then the backward equation of penalized BSDE \eqref{pBSDE} can be rewritten as
$$-dY^\lambda_t=f(t,X_t,Y^\lambda_t,Z^\lambda_t)\,dt+dA^\lambda_t-dK^\lambda_t-(Z^\lambda_t)^\top\,dW_t$$
in differential form.
\begin{theorem}
\label{penaltyerror}
    Let us denote by $(X,Y,Z,A,K)$ and $(X,Y^\lambda,Z^\lambda)$ the solutions to \eqref{BSDE} and \eqref{pBSDE}, respectively. Suppose Assumptions~\ref{assumptions},\ref{assumptionpde}, and \ref{assumption2} hold and choose $\lambda>2K_y$. Then, we have
    $$\mathbb{E}\left[\sup_{0\le t\le T}|Y_t-Y^\lambda_t|^2\right]<\frac{C_2}{\lambda^2}\quad\text{ and }\quad\int^T_0\mathbb{E}|Z_s-Z^\lambda_s|^2\,ds<\frac{C_2}{\lambda}.$$
    
\end{theorem}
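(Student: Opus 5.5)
The first bound is almost immediate from Theorem~\ref{ydiffbound}. That result gives the pathwise estimate \eqref{boundineq}, namely $|Y_t-Y^\lambda_t|\le e^{K_yT}2\max(D_w^+,D_b^-)/\lambda$ almost surely for each fixed $t$. Both $Y$ and $Y^\lambda$ are continuous, so the estimate holds simultaneously for all rational $t$ outside a single null set, and hence for all $t\in[0,T]$. Squaring and taking the expectation of the supremum then gives $\mathbb{E}[\sup_t|Y_t-Y^\lambda_t|^2]\le C/\lambda^2$, with $C=4e^{2K_yT}\max(D_w^+,D_b^-)^2$.

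For the $Z$-estimate I would apply It\^o's formula to $|\delta Y_t|^2$ on $[0,T]$, where $\delta Y=Y-Y^\lambda$ and $\delta Z=Z-Z^\lambda$. Since both equations share the terminal value $g(X_T)$, we have $\delta Y_T=0$, and taking expectations gives
\begin{align*}
|\delta Y_0|^2+\mathbb{E}\int_0^T|\delta Z_s|^2ds
&=2\mathbb{E}\int_0^T\delta Y_s\bigl(f(s,X_s,Y_s,Z_s)-f(s,X_s,Y^\lambda_s,Z^\lambda_s)\bigr)ds\\
&\quad+2\mathbb{E}\int_0^T\delta Y_s\,(dA_s-dA^\lambda_s)-2\mathbb{E}\int_0^T\delta Y_s\,(dK_s-dK^\lambda_s).
\end{align*}
The stochastic integral is a true martingale because $Y,Y^\lambda\in\mathbb{S}^2$ and $Z,Z^\lambda\in\mathbb{H}^2$.

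The driver term is controlled by Lipschitz continuity and Young's inequality:
\[
2|\delta Y|\bigl(K_y|\delta Y|+K_z|\delta Z|\bigr)\le (2K_y+2K_z^2)|\delta Y|^2+\tfrac12|\delta Z|^2 .
\]
The $\tfrac12|\delta Z|^2$ is absorbed into the left-hand side. The remaining $|\delta Y|^2$ contribution is of order $\lambda^{-2}$ by the first part.

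The reflection terms I would bound crudely, using the uniform pathwise bound from Theorem~\ref{ydiffbound}:
\[
\Bigl|\mathbb{E}\int_0^T\delta Y_s\,d(A-A^\lambda-K+K^\lambda)_s\Bigr|\le \frac{C}{\lambda}\,\mathbb{E}\bigl[A_T+A^\lambda_T+K_T+K^\lambda_T\bigr].
\]
This reduces the problem to a bound on the total variations that is uniform in $\lambda$. For $A$ and $K$ this is just membership in $\mathcal{A}^2$. For the penalized terms, Corollary~\ref{stoapb} is the key input. It gives $\lambda(Y^\lambda_t-p_b(t,X_t))^-\le 2D_b^-$ and $\lambda(p_w(t,X_t)-Y^\lambda_t)^-\le 2D_w^+$ almost surely, so directly from the definitions
\[
A^\lambda_T\le 2TD_b^-,\qquad K^\lambda_T\le 2TD_w^+ .
\]
These are deterministic bounds, independent of $\lambda$, which is exactly what is needed. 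Collecting terms yields $\tfrac12\mathbb{E}\int_0^T|\delta Z_s|^2ds\le C/\lambda^2+C/\lambda$, and hence the claimed $C_2/\lambda$ bound once $C_2$ is enlarged to cover both estimates (using $\lambda>2K_y$ so that $\lambda$ is bounded below).

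The main point to check is the justification of these steps rather than any real obstacle. One needs the martingale property of the stochastic integral. One needs the uniform $\lambda$-independent control of $A^\lambda_T$ and $K^\lambda_T$, which is where Corollary~\ref{stoapb}, and hence Assumption~\ref{assumption2}, enters. A cleverer sign argument (using $\int(Y-\xi)dA=0$ and the like) is not needed, since the crude bound already gives the $\lambda^{-1}$ rate.
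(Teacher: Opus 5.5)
Your proof is correct and has the same skeleton as the paper's. The first bound is read off from Theorem~\ref{ydiffbound}, and the $Z$-bound comes from It\^o's formula for $|\delta Y|^2$, Young's inequality on the driver, and an $O(\lambda^{-1})$ bound on the reflection terms.

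The one real difference is how you treat the penalty terms. The paper writes $\delta Y^\lambda=Y^\lambda-Y$ and shows by a sign argument that $2\int_0^T\delta Y^\lambda_s\,dA^\lambda_s\le 0$ and $2\int_0^T\delta Y^\lambda_s\,dK^\lambda_s\ge 0$. This uses $Y\ge p_b(\cdot,X)$, $Y\le p_w(\cdot,X)$ and $(Y^\lambda-p_b)(Y^\lambda-p_b)^-\le 0$, so these terms are simply dropped. Only the $dA$ and $dK$ terms are then estimated, by Cauchy--Schwarz against $\mathbb{E}[A_T^2]$ and $\mathbb{E}[K_T^2]$.

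You instead bound all four terms crudely by $\sup_s|\delta Y_s|$ times the total mass. That requires the $\lambda$-independent bounds $A^\lambda_T\le 2TD_b^-$ and $K^\lambda_T\le 2TD_w^+$ from Corollary~\ref{stoapb}.

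The paper's route is the standard structural one. It would still work with only an $L^2$ bound on $\sup_t|\delta Y_t|$ and no control of the penalty mass. Your route is more uniform and equally short here, since the estimate behind Corollary~\ref{stoapb} (Proposition~\ref{apb1}) is already needed to prove Theorem~\ref{ydiffbound}.

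A minor point: since $X_0$ may be random, the left-hand side should read $\mathbb{E}|\delta Y_0|^2$. This is harmless, because that term is discarded anyway.
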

\begin{proof}
First, we have
$$\mathbb{E}\left[\sup_{0\le t\le T}|Y_t-Y^\lambda_t|^2\right]<\frac{C_2}{\lambda^2}$$
from Theorem~\ref{ydiffbound}. Now, let us denote $\delta Y^\lambda:=Y^\lambda-Y$ and $\delta Z^\lambda:=Z^\lambda-Z$. Then, we have
\begin{equation*}
\begin{aligned}-\delta Y^\lambda_t=& \left(f(t,X_t,Y^\lambda_t,Z^\lambda_t)-f(t,X_t,Y_t,Z_t)\right)\,dt\\&+(dA^\lambda_t-dA_t)-(dK^\lambda_t-dK_t)-(\delta Z^\lambda_t)^\top\,dW_t.
\end{aligned}
\end{equation*}
Thus, applying the It\^o formula to $|\delta Y^\lambda_t|^2$, we obtain
\begin{equation}
\label{energy}
\begin{aligned}
|\delta Y^\lambda_0|^2
&\;+\; \int^T_0 |\delta Z^\lambda_s|^2 \, ds \\
&= |\delta Y^\lambda_T|^2
  + 2 \int^T_0 \delta Y^\lambda_s \, \left(f(s,X_s,Y^\lambda_s,Z^\lambda_s)-f(s,X_s,Y_s,Z_s)\right) \, ds\\
  &\quad+ 2 \int^T_0 \delta Y^\lambda_s \, dA^\lambda_s
  - 2 \int^T_0 \delta Y^\lambda_s \, dA_s \\
&\quad
  - 2 \int^T_0 \delta Y^\lambda_s \, dK^\lambda_s
  + 2 \int^T_0 \delta Y^\lambda_s \, dK_s
  - 2 \int^T_0 \delta Y^\lambda_s \, \Delta Z_s \, dW_s.
\end{aligned}
\end{equation}
By Young's inequality and the Lipschitz property of $f$, we have
\begin{equation}
\label{1st}
    \begin{aligned}
        &2 \int^T_0 \delta Y^\lambda_s \, \left(f(s,X_s,Y^\lambda_s,Z^\lambda_s)-f(s,X_s,Y_s,Z_s)\right) \, ds\\
        \le&4K^2_z\int^T_0|\delta Y^\lambda_s|^2\,ds+\frac{1}{4K^2_z}\int^T_0\left(f(s,X_s,Y^\lambda_s,Z^\lambda_s)-f(s,X_s,Y_s,Z_s)\right)^2\,ds\\
        \le&4K^2_z\int^T_0|\delta Y^\lambda_s|^2\,ds +\frac{1}{4K^2_z}\int^T_0 2K^2_y|\delta Y^\lambda_s|^2+2K^2_Z|\delta Z^\lambda_s|^2\,ds\\
        \le& \frac{C_2}{\lambda^2}+\frac{1}{2}\int^T_0 |\delta Z^\lambda_s|^2 \, ds.
    \end{aligned}
\end{equation}
Next, note that
\begin{equation}
    \label{2nd}
    \begin{aligned}
    2\int^T_0\delta Y^\lambda_s dA^\lambda_s=&2\int^T_0(Y^\lambda_s-p_b(s,X_s)) dA^\lambda_s-2\int^T_0(Y_s-p_b(s,X_s)) dA^\lambda_s\\
    \le&2\int^T_0(Y^\lambda_s-p_b(s,X_s))(Y^\lambda_s-p_b(s,X_s)) ^-ds\\
    \le&0
    \end{aligned}
\end{equation}
and
\begin{equation}
    \label{3rd}
    \begin{aligned}
    2\int^T_0\delta Y^\lambda_s dK^\lambda_s=&2\int^T_0(Y^\lambda_s-p_w(s,X_s)) dK^\lambda_s-2\int^T_0(Y_s-p_w(s,X_s)) dK^\lambda_s\\
    \ge&-2\int^T_0(p_w(s,X_s)-Y^\lambda_s)(p_w(s,X_s)-Y^\lambda_s) ^-ds\\
    \ge&0.
    \end{aligned}
\end{equation}
Plugging in \eqref{1st}, \eqref{2nd}, and \eqref{3rd} into \eqref{energy}, and then taking expectation of absolute values of both sides, we arrive at

\begin{equation}
\label{energy2}
\begin{aligned}
\mathbb{E}|\delta Y^\lambda_0|^2
&\;+\;\frac{1}{2} \int^T_0 \mathbb{E}|\delta Z^\lambda_s|^2 \, ds \\
&\le \frac{C_2}{\lambda^2}
  + 2 \mathbb{E}\left[\int^T_0 |\delta Y^\lambda_s| \, dA_s\right]
  + 2 \mathbb{E}\left[\int^T_0 |\delta Y^\lambda_s| \, dK_s\right].
\end{aligned}
\end{equation}
Also, since \(A,K\in A^2\) we have \(\mathbb{E}[A_T^2]+\mathbb{E}[K_T^2]<\infty\). Thus, applying the Cauchy--Schwarz inequality yields
\begin{align*}
\mathbb{E}\int_0^T |\delta Y_s^\lambda|\,dA_s
&\le
\left(\mathbb{E}\left[\sup_{0\le s\le T}|\delta Y_s^\lambda|^2\right]\right)^{1/2}
\left(\mathbb{E}[A_T^2]\right)^{1/2}
\le \frac{C_2}{\lambda}, \\
\mathbb{E}\int_0^T |\delta Y_s^\lambda|\,dK_s
&\le
\left(\mathbb{E}\left[\sup_{0\le s\le T}|\delta Y_s^\lambda|^2\right]\right)^{1/2}
\left(\mathbb{E}[K_T^2]\right)^{1/2}
\le \frac{C_2}{\lambda}.
\end{align*}
Plugging these estimates into \eqref{energy2}, we conclude that
\[
\int_0^T \mathbb{E}|\delta Z_s^\lambda|^2\,ds \le \frac{C_2}{\lambda}.
\]
 \end{proof}

\section{Numerical scheme and error analysis}
\label{sec:ea}
We describe the numerical scheme and derive the error bounds of our method. From here on, we denote by $C_2$ any constant depending only on the data of the BSDE \eqref{BSDE}: $T$, $X_0$, $b$, $\sigma$, $g$, $f$, $p_b$, and $p_w$. Note that the constant $C_2$ does not depend on the penalty parameter $\lambda$. 

\subsection{Numerical scheme}
\label{subs:ns}
We describe the numerical scheme of our approximation method. First, we fix two time grids $\pi:=\{t_i:=iT/n, i=0,...,n\}$ and $\tilde{\pi}:=\{t_i:=iT/(mn), i=0,...,mn\}$ $(n,m\in\mathbb{N})$ on $[0,T]$ and write $\Delta t:= T/n$ and $\Delta\tilde{t}:=T/(mn)$. Note that $\pi\subset\tilde{\pi}$, $\Delta t\ge\Delta \tilde{t}$, and the generic constant $C_2$ does not depend on the resolution $n$ nor on $m$.

The solution $X$ to the forward equation is approximated by the Euler--Maruyama scheme $X^{\pi}$, with values on the finer grid  $\tilde{\pi}$ given inductively as
\begin{equation}
\label{emx}
    \begin{cases}
        X^{\tilde{\pi}}_0 = X_0\\
        X^{\tilde{\pi}}_{\tilde{t}_{i+1}}=X^{\tilde{\pi}}_{\tilde{t}_i}+b(\tilde{t}_i,X^{\tilde{\pi}}_{\tilde{t}_i})\Delta \tilde{t}+\sigma(\tilde{t}_i,X^{\tilde{\pi}}_{\tilde{t}_i})\left(W_{\tilde{t}_{i+1}}-W_{\tilde{t}_i}\right).
    \end{cases}
\end{equation}
for $i=0,...,mn-1$.

Then, on the coarser grid $\pi$, we consider the discretization scheme
\begin{equation}\label{scheme}
\mathcal{Y}^{\lambda,\pi}_{t_{i+1}}
:=
\mathcal{Y}^{\lambda,\pi}_{t_i}
-
f_\lambda\left(t_i,X^{\tilde{\pi}}_{t_i},\mathcal{Y}^{\lambda,\pi}_{t_i},\mathcal{Z}^{\lambda,\pi}_{t_i}\right)\Delta t
+
\mathcal{Z}^{\lambda,\pi}_{t_i}\Delta W_{i+1},
\end{equation}
with terminal condition
\begin{equation}\label{discterm}
\mathcal{Y}^{\lambda,\pi}_{t_n}=g(X^{\tilde{\pi}}_{t_n}),
\end{equation}
where $\Delta W_{i+1}:=W_{t_{i+1}}-W_{t_i}$, to approximate the penalized BSDE solution $(Y^\lambda,Z^\lambda)$ associated with the forward Euler approximation $X^{\tilde{\pi}}$ of $X$.

It is easy to see that \eqref{scheme} implies the implicit discretization scheme
\begin{equation}\label{discretization}
\begin{cases}
\mathcal{Y}^{\lambda,\pi}_{t_i}
:=
\mathbb{E}\left[\mathcal{Y}^{\lambda,\pi}_{t_{i+1}}\mid\mathcal{F}_{t_i}\right]
+
f_\lambda\left(t_i,X^{\tilde{\pi}}_{t_i},\mathcal{Y}^{\lambda,\pi}_{t_i},\mathcal{Z}^{\lambda,\pi}_{t_i}\right)\Delta t,\\[0.5em]
\mathcal{Z}^{\lambda,\pi}_{t_i}
:=
\frac{1}{\Delta t}\mathbb{E}\left[\mathcal{Y}^{\lambda,\pi}_{t_{i+1}}\Delta W_{i+1}\mid\mathcal{F}_{t_i}\right],
\end{cases}
\end{equation}
for $i=0,\ldots,n-1$, as introduced for instance in \cite{BT:04} and \cite{BE:08}.

It is well known that under our assumptions,
\begin{equation}
\label{xdifbound}
\max_{i\in\{0,\cdots,mn\}}\mathbb{E}\left[\sup_{t\in[\tilde{t}_i,\tilde{t}_{i+1})}\left|X_t-X^{\tilde{\pi}}_{\tilde{t}_i}\right|^p\right]\le C_2\, \Delta \tilde{t}^\frac{p}{2},
\end{equation}
\begin{equation}
\label{xebound}
\max_{i\in\{0,\cdots,n\}}\mathbb{E}\left|X^{\tilde{\pi}}_{t_i}\right|^p\le C_2,
\end{equation}
\begin{equation}
    \label{xbound}
        \|X\|^p_{\mathbb{S}^p}<C_2,
    \end{equation}
and
\begin{equation}
\label{xdifbound2}
\mathbb{E}\left|X_s-X_{t_{i}}\right|^p\le C_2\, \Delta t^\frac{p}{2}\quad(s\in[t_i,t_{i+1}), i=0,\cdots,n-1)
\end{equation}
hold for any $p\ge 2$ \citep{O:03,KP:92}.

\subsection{A priori estimates}

Before obtaining the error bounds, we derive a priori estimates. The following bound on the norms of the solutions to \eqref{pBSDE} can be obtained by applying previous results and adapting standard proofs of well-known results such as Lemmata 1.5.1 and 1.5.2 of Part II of \cite{E:06}.

\begin{proposition}
\label{xyzbound}
    Suppose Assumptions \ref{assumptions}, \ref{assumptionpde}, and \ref{assumption2} hold. Let $(X,Y^\lambda,Z^\lambda)$ denote the solution to \eqref{pBSDE} and choose $\lambda>2K_y$. Then, we have    $$\|Y^\lambda\|^p_{\mathbb{S}^p}+\|Z^\lambda\|^p_{\mathbb{H}^p}\le C_2$$
    for any $p\ge2$.
\end{proposition}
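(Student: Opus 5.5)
The key difficulty is that the penalized driver $f_\lambda$ has Lipschitz constant of order $\lambda$ in $y$, so a direct application of the standard a priori estimates (Lemmata 1.5.1--1.5.2 of Part II of \cite{E:06}) would produce constants growing like $e^{C\lambda T}$. The plan is therefore to avoid treating the penalty terms as part of a Lipschitz driver. Instead, we regard them as the finite-variation processes $A^\lambda$ and $K^\lambda$, and control them using Corollary~\ref{stoapb}. By that corollary, for $\lambda>2K_y$,
\[
dA^\lambda_t=\lambda\,(Y^\lambda_t-p_b(t,X_t))^-\,dt\le 2D_b^-\,dt,\qquad dK^\lambda_t\le 2D_w^+\,dt \quad\text{a.s.}
\]
Thus $|A^\lambda_T|+|K^\lambda_T|\le 2T(D_b^-+D_w^+)$ almost surely, uniformly in $\lambda$. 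Consequently $(Y^\lambda,Z^\lambda)$ solves a BSDE with the $\lambda$-independent Lipschitz driver $f$ plus the bounded drift
\[
h_t:=\lambda(Y^\lambda_t-p_b(t,X_t))^- -\lambda(p_w(t,X_t)-Y^\lambda_t)^-,\qquad |h_t|\le 2(D_b^-+D_w^+).
\]

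For the $Y$-part there are two routes. The quickest is to combine Theorem~\ref{ydiffbound}, which gives $|Y_t-Y^\lambda_t|\le C_2/\lambda\le C_2$, with the known bound $\|Y\|_{\mathbb{S}^p}\le C_2$ for the DRBSDE. The latter can in turn be obtained from $p_b(t,X_t)\le Y_t\le p_w(t,X_t)$, linear growth of $p_b,p_w$, and \eqref{xbound}, since these give $|Y_t|\le C_2(1+\sup_s|X_s|)$. Alternatively, and self-contained, we write
\[
Y^\lambda_t=\mathbb{E}\Big[g(X_T)+\int_t^T \big(f(s,X_s,Y^\lambda_s,Z^\lambda_s)+h_s\big)\,ds\,\Big|\,\mathcal{F}_t\Big]
\]
and apply the standard $L^p$ estimate for Lipschitz BSDEs with driver $(s,y,z)\mapsto f(s,X_s,y,z)+h_s$. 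This yields
\[
\|Y^\lambda\|^p_{\mathbb{S}^p}+\|Z^\lambda\|^p_{\mathbb{H}^p}\le C\,\mathbb{E}\Big[|g(X_T)|^p+\Big(\int_0^T |f(s,X_s,0,0)|+|h_s|\,ds\Big)^p\Big],
\]
where $C$ depends only on $K_y,K_z,T,p$. The right-hand side is bounded by $C_2$ using the Lipschitz property of $g$, the bound $|f(s,x,0,0)|\le \|f(0,\cdot,0,0)\|_\infty+K_{f,\frac12}T^{1/2}$, the bound $|h_s|\le 2(D_b^-+D_w^+)$, and \eqref{xbound}.

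To make the second route explicit for $Z$, we apply Itô's formula to $|Y^\lambda_t|^2$. We bound the driver cross term by Young's inequality, absorbing $\tfrac12\int|Z^\lambda|^2$ on the left, and the penalty cross term by
\[
2\int_0^T|Y^\lambda_s|\,|h_s|\,ds\le C_2\sup_s|Y^\lambda_s| .
\]
We then raise to the power $p/2$ and apply the Burkholder--Davis--Gundy inequality to the martingale term $\int_0^T Y^\lambda_s (Z^\lambda_s)^\top dW_s$. The BDG contribution is absorbed via
\[
\mathbb{E}\Big(\int_0^T|Y^\lambda|^2|Z^\lambda|^2\,ds\Big)^{p/4}\le \varepsilon\|Z^\lambda\|^p_{\mathbb{H}^p}+C_\varepsilon\|Y^\lambda\|^p_{\mathbb{S}^p},
\]
and the $Y$-bound already obtained then gives $\|Z^\lambda\|^p_{\mathbb{H}^p}\le C_2$.

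The main point to check is that Corollary~\ref{stoapb} indeed gives a $\lambda$-uniform almost-sure bound on the penalty integrands, which it does precisely under Assumption~\ref{assumption2} and $\lambda>2K_y$. Everything else is the routine $L^p$ BSDE estimate with a bounded extra drift, and it produces constants independent of $\lambda$.
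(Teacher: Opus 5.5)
Your proposal is correct and follows the paper's strategy. Corollary~\ref{stoapb} turns the penalty into a drift bounded uniformly in $\lambda$, after which a $\lambda$-free $L^p$ estimate for the Lipschitz BSDE with driver $f$ applies; the paper re-derives that estimate by hand (subdivision of $[0,T]$, Doob/BDG, backward Gr\"onwall induction), whereas you cite it or run It\^o's formula on $|Y^\lambda|^2$ with BDG. Your alternative $Y^\lambda$-bound via Theorem~\ref{ydiffbound} is also valid, and shortens further by reading $p_b(t,X_t)-2D_b^-/\lambda\le Y^\lambda_t\le p_w(t,X_t)+2D_w^+/\lambda$ directly off Corollary~\ref{stoapb}.
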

\begin{proof}
     We first divide $[0,T]$ into $N$ intervals $[\tau_i,\tau_{i+1}]$ with $\tau_{i+1}-\tau_i=1/N\,(i=0,\cdots,N-1)$. For any $\tau_i\le r\le \tau_{i+1}$, we have
    $$Y^\lambda_r=Y^\lambda_{\tau_{i+1}}+\int^{\tau_{i+1}}_r f_\lambda(s,X_s,Y^\lambda_s,Z^\lambda)\,ds-\int^{\tau_{i+1}}_r(Z^\lambda_s)^\top\,dW_s.$$
    Taking conditional expectations on both sides leads to    $$Y^\lambda_r=\mathbb{E}\left[Y^\lambda_{\tau_{i+1}}+\int^{\tau_{i+1}}_r f_\lambda(s,X_s,Y^\lambda_s,Z^\lambda_s)\,ds\mid \mathcal{F}_{r}\right].$$
    Thus, for $\tau_i\le t \le r\le \tau_{i+1},$ we obtain
    $$\left|Y^\lambda_r\right|\le\mathbb{E}\left[\left|Y^\lambda_{\tau_{i+1}}\right|+\int^{\tau_{i+1}}_t \left|f_\lambda(s,X_s,Y^\lambda_s,Z^\lambda_s)\right|\,ds\mid\mathcal{F}_{r}\right].$$
    Thus, by Doob and Jensen's inequality, we obtain
    \begin{equation}\label{ypre}\begin{aligned}&\|Y^\lambda\|^p_{\mathbb{S^p}[t,\tau_{i+1}]}:=\mathbb{E}\left[\sup_{t\le r\le \tau_{i+1}}\left|Y^\lambda_r\right|^p\right]\\\le& C_2\,\mathbb{E}\left[\left|Y^\lambda_{\tau_{i+1}}\right|^p+\left(\int^{\tau_{i+1}}_t \left|f_\lambda(s,X_s,Y^\lambda_s,Z^\lambda_s)\right|\,ds\right)^p\right].\end{aligned}\end{equation} Next, we apply the Burkholder--Davis--Gundy inequality to the martingale
    $$M_u:=\int^u_t(Z^\lambda_s)^\top\,dW_s=Y^\lambda_u-Y^\lambda_t+\int^u_tf_\lambda\left(s,X_s,Y^\lambda_s,Z^\lambda_s\right)\,ds\quad(t\le u\le\tau_{i+1})$$
    to obtain
        \begin{equation}
        \label{zpre}
        \begin{aligned}
        &\|Z\|^p_{\mathbb{H}^p[t,\tau_{i+1}]}
        :=\mathbb{E}\left[\left(\int^{\tau_{i+1}}_t(Z^\lambda_s)^2\,ds\right)^\frac{p}{2}\right]
        \\=&\mathbb{E}\left[[M]^\frac{p}{2}_{\tau_{i+1}}\right]
        \le C_2\,\mathbb{E}\left[\left(\sup_{t\le r\le \tau_{i+1}}|M_r|\right)^p\right]\\
        \le& C_2\,\mathbb{E}\left[\sup_{t\le r\le \tau_{i+1}} \left|Y^\lambda_r\right|^p+\left|Y^\lambda_t\right|^p+\int^{\tau_{i+1}}_t \left|f_\lambda\left(s,X_s,Y^\lambda_s,Z^\lambda_s\right)\right|^p\,ds\right]\\
        \le & C_2\left(\|Y^\lambda\|^p_{\mathbb{S}^p[t,\tau_{i+1}]}+\,\mathbb{E}\left[\int^{\tau_{i+1}}_t \left|f_\lambda\left(s,X_s,Y^\lambda_s,Z^\lambda_s\right)\right|^p\,ds\right]\right).
        \end{aligned}
        \end{equation}

    Thus, putting together \eqref{ypre} and \eqref{zpre} yields
    \begin{equation*}\begin{aligned}&\|Y^\lambda\|^p_{\mathbb{S}^p[t,\tau_{i+1}]}+\|Z^\lambda\|^p_{\mathbb{H}^p[t,\tau_{i+1}]}\\\le& C_2\,\mathbb{E}\left[\left|Y^\lambda_{\tau_{i+1}}\right|^p+\left(\int^{\tau_{i+1}}_t \left|f_\lambda(s,X_s,Y^\lambda_s,Z^\lambda_s)\right|\,ds\right)^p\right].\end{aligned}\end{equation*}
    We then apply the continuity properties of $f$, inequality \eqref{xbound}, and Corollary~\ref{stoapb} to obtain
    \begin{equation*}
        \begin{aligned}
            &\|Y^\lambda\|^p_{\mathbb{S}^p[t,\tau_{i+1}]}+\|Z^\lambda\|^p_{\mathbb{H}^p[t,\tau_{i+1}]}\\\le& C_2\,\mathbb{E}\left[|Y^\lambda_{\tau_{i+1}}|^p+\left(\int^{\tau_{i+1}}_t \left|f_\lambda(s,X_s,Y^\lambda_s,Z^\lambda_s)\right|\,ds\right)^p\right]\\
            \le& C_2\mathbb{E}\Bigg[|Y^\lambda_{\tau_{i+1}}|^p+\Big(\int^{\tau_{i+1}}_t \left|f(s,X_s,Y^\lambda_s,Z^\lambda_s)\right|+\\
            &\qquad\left|\lambda\left((Y^\lambda_s-p_b(s,X_s))^--(p_w(s,X_s)-Y^\lambda_s)^-\right)\right|\,ds\Big)^p\Bigg]\\
            \le& C_2\mathbb{E}\left[|Y^\lambda_{\tau_{i+1}}|^p+\left(\int^{\tau_{i+1}}_t \left|f(s,X_s,Y^\lambda_s,Z^\lambda_s)\right|\,ds\right)^p\right]\\
            \le& C_2\mathbb{E}\left(|Y^\lambda_{\tau_{i+1}}|^p+\int^{\tau_{i+1}}_t |f(0,0,0,0)|^p+\mathbb{E}|Y^\lambda_s|^p+\mathbb{E}|Z^\lambda_s|^p\,ds\right)\\
            &+C_2\mathbb{E}\left[\int^{\tau_{i+1}}_{t} s^\frac{p}{2}+|X_s|^p\,ds\right]\\
            \le&C_2\left(\mathbb{E}|Y^\lambda_{\tau_{i+1}}|^p+\int^{\tau_{i+1}}_t \|Y^\lambda\|^p_{\mathbb{S}^p[t,\tau_{i+1}]}\,ds+\frac{1}{N}\|Z^\lambda\|^p_{\mathbb{H}^p{[t,\tau_{i+1}]}}\right).
        \end{aligned}
    \end{equation*}
    Choosing $N>2C_2$, we arrive at
    \begin{equation}\label{yfinal}\begin{aligned}\|Y^\lambda\|^p_{\mathbb{S}^p[t,\tau_{i+1}]}\le&\|Y^\lambda\|^p_{\mathbb{S}^p[t,\tau_{i+1}]}+\|Z^\lambda\|^p_{\mathbb{H}^p[t,\tau_{i+1}]}\\\le& C_2\left(\mathbb{E}|Y^\lambda_{\tau_{i+1}}|^p+\int^{\tau_{i+1}}_t \|Y^\lambda\|^p_{\mathbb{S}^p[t,\tau_{i+1}]}\,ds\right),\end{aligned}\end{equation}
    Therefore, applying Gr\"onwall's lemma yields
    \begin{equation*}\|Y^\lambda\|^p_{\mathcal{S}^p[\tau_i,\tau_{i+1}]}\le C_2\mathbb{E}|Y^\lambda_{\tau_{i+1}}|^p\le C_2\|Y^\lambda\|^p_{\mathcal{S}^p[\tau_{i+1},\tau_{i+2}]}\quad(i=0,\cdots,N-2).
    \end{equation*}
    Thus, $\|Y^\lambda\|^p_{\mathbb{S}^p}<C_2$ follows from induction, the Lipschitz property of $\bar{g}$, and \eqref{xbound}. Plugging this into \eqref{yfinal} and using induction yields
    $\|Z^\lambda\|^p_{\mathbb{H}^p}<C_2$ as well, and the desired conclusion follows.
 \end{proof}

\begin{remark}
    The intervals $[\tau_i,\tau_{i+1}]$ introduced in the proof of Proposition~\ref{xyzbound} are irrelevant to the partition $\pi$ of the discretization scheme of our method.
\end{remark}

\begin{corollary}
\label{apbcor}
    Suppose Assumptions \ref{assumptions}, \ref{assumptionpde}, and \ref{assumption2} hold. Let $(X,Y^\lambda,Z^\lambda)$ denote the solution to \eqref{pBSDE} and choose $\lambda>2K_y$. Then,
        $$\sum^{n-1}_{i=0}\int^{t_{i+1}}_{t_i}\mathbb{E}\left|Y^\lambda_s-Y^\lambda_{t_i}\right|^2\,ds\le C_2\Delta t.$$
\end{corollary}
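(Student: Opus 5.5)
For $s\in[t_i,t_{i+1}]$, I will start from the backward equation of \eqref{pBSDE} written between $t_i$ and $s$:
\[
Y^\lambda_s-Y^\lambda_{t_i}=-\int_{t_i}^s f_\lambda(r,X_r,Y^\lambda_r,Z^\lambda_r)\,dr+\int_{t_i}^s (Z^\lambda_r)^\top\,dW_r .
\]
Squaring, taking expectations, and using $(a+b)^2\le 2a^2+2b^2$, Cauchy--Schwarz on the $dr$-integral and the It\^o isometry on the stochastic integral gives
\[
\mathbb{E}|Y^\lambda_s-Y^\lambda_{t_i}|^2\le 2\Delta t\int_{t_i}^{t_{i+1}}\mathbb{E}|f_\lambda(r,X_r,Y^\lambda_r,Z^\lambda_r)|^2\,dr+2\int_{t_i}^{t_{i+1}}\mathbb{E}|Z^\lambda_r|^2\,dr .
\]

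The key point is that $f_\lambda$ is bounded in $L^2$ uniformly in $\lambda$. By Corollary~\ref{stoapb}, the two penalty terms $\lambda(Y^\lambda_r-p_b(r,X_r))^-$ and $\lambda(p_w(r,X_r)-Y^\lambda_r)^-$ are bounded almost surely by $2D_b^-$ and $2D_w^+$. This is exactly where the improved $O(\lambda^{-1})$ structure enters: without it, the drift would carry a factor of $\lambda$. For the $f$ part, the Lipschitz and H\"older assumptions give
\[
|f(r,X_r,Y^\lambda_r,Z^\lambda_r)|\le C_2\bigl(1+|X_r|+|Y^\lambda_r|+|Z^\lambda_r|\bigr).
\]
Hence
\[
\mathbb{E}|f_\lambda(r,\cdot)|^2\le C_2\bigl(1+\mathbb{E}|X_r|^2+\mathbb{E}|Y^\lambda_r|^2+\mathbb{E}|Z^\lambda_r|^2\bigr).
\]
Integrating over $[0,T]$ and applying \eqref{xbound} and Proposition~\ref{xyzbound} with $p=2$ yields
\[
\int_0^T\mathbb{E}|f_\lambda(r,\cdot)|^2\,dr\le C_2 .
\]

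Next I integrate the pointwise bound in $s$ over $[t_i,t_{i+1}]$, which contributes another factor of $\Delta t$, and sum over $i$. This gives
\[
\sum_{i=0}^{n-1}\int_{t_i}^{t_{i+1}}\mathbb{E}|Y^\lambda_s-Y^\lambda_{t_i}|^2\,ds\le 2\Delta t^2\int_0^T\mathbb{E}|f_\lambda|^2\,dr+2\Delta t\int_0^T\mathbb{E}|Z^\lambda_r|^2\,dr .
\]
The right-hand side is at most $C_2(\Delta t^2+\Delta t)\le C_2\Delta t$, since $\Delta t\le T$.

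The only delicate step is the uniform-in-$\lambda$ control of the penalty part of $f_\lambda$, and Corollary~\ref{stoapb} handles it directly. Note that I only need the $\mathbb{H}^2$ bound on $Z^\lambda$; no regularity of $Z^\lambda$ in time is required.
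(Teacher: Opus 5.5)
Your proposal is correct and follows essentially the same route as the paper. Both arguments use the increment identity from \eqref{pBSDE}, Cauchy--Schwarz with the It\^o isometry, the almost-sure penalty bound from Corollary~\ref{stoapb}, linear growth of $f$, and the $\mathbb{S}^2$/$\mathbb{H}^2$ bounds of Proposition~\ref{xyzbound}. The only cosmetic difference is that you enlarge the inner integrals to the whole subinterval before integrating in $s$, whereas the paper uses Fubini to get the weight $(t_{i+1}-s)\le\Delta t$; both yield the same $C_2\Delta t$ bound.
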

\begin{proof}
    From \eqref{pBSDE}, we calculate
    \begin{equation*}
        \begin{aligned}
            &\mathbb{E}\left|Y^\lambda_s-Y^\lambda_{t_i}\right|^2\\
            \le& C_2\,\mathbb{E}\left[\int^s_{t_i}\left|f_\lambda\left(r,X_r,Y^\lambda_r,Z^\lambda_r\right)\right|^2\,dr+\left(\int^s_{t_i}(Z^\lambda_r)^\top\,dW_r\right)^2\right]\\
            \le& C_2\,\Bigg(\int^s_{t_i} \mathbb{E}\left[\left|f\left(r,X_r,Y^\lambda_r,Z^\lambda_r\right)\right|^2+(Z^\lambda_r)^2\right]\,dr\\
            &\quad+\int^s_{t_i}\mathbb{E}\left|\lambda\left((Y^\lambda_r-p_b(r,X_r))^--(p_w(r,X_r)-Y^\lambda_r)^-\right)\right|^2\,dr\Bigg)\\
            \le& C_2\,\int^s_{t_i} \left|f(0,0,0,0)\right|^2+r+\mathbb{E}|X_r|^2+\mathbb{E}|Y^\lambda_r|^2+\mathbb{E}|Z^\lambda_r|^2\,dr\\
            =& C_2\left((s-t_i)+\int^s_{t_i}\mathbb{E}|Z^\lambda_r|^2\,dr\right),
        \end{aligned}
    \end{equation*}
for $t_i\le s\le t_{i+1}$, where we have used It\^o isometry in the second inequality and Corollary~\ref{stoapb}, the continuity property of $f$, and Proposition~\ref{xyzbound} in the last inequality. Thus, using Fubini's theorem and Proposition~\ref{xyzbound} again, we obtain
\begin{equation*}
    \begin{aligned}
        &\sum^{n-1}_{i=0}\int^{t_{i+1}}_{t_i}\mathbb{E}\left|Y^\lambda_s-Y^\lambda_{t_i}\right|^2\,ds\\
        &\le C_2\sum^{n-1}_{i=0}\int^{t_{i+1}}_{t_i}\left((s-t_i)+\int^s_{t_i}\mathbb{E}|Z^\lambda_r|^2\,dr\right)\,ds\\
        &\le C_2\sum^{n-1}_{i=0}\frac{1}{2}(t_{i+1}-t_i)^2+\sum^{n-1}_{i=0}\int^{t_{i+1}}_{t_i}\int^s_{t_i}\mathbb{E}|Z^\lambda_r|^2\,dr\,ds\\
        &\le C_2\Delta t+ \sum^{n-1}_{i=0}\int^{t_{i+1}}_{t_i}(t_{i+1}-s)\mathbb{E}|Z^\lambda_s|^2\,ds\\
        &\le C_2\Delta t+\Delta t\sum^{n-1}_{i=0}\int^{t_{i+1}}_{t_i}\mathbb{E}|Z^\lambda_s|^2\,ds\\
        &=C_2\Delta t+\Delta t\|Z^\lambda\|^2_{\mathbb{H}^2}\\
        &\le C_2\Delta t.
    \end{aligned}
\end{equation*}
 \end{proof}

\subsection{Square error bound}
We now derive the total square error bound of the proposed scheme.
\begin{theorem} 
\label{discerror}
     Suppose Assumptions \ref{assumptions},\ref{assumptionpde} and \ref{assumption2} hold. Let $(X,Y^\lambda,Z^\lambda)$ denote the solution to \eqref{pBSDE} and choose $\lambda>2K_y$ and $\Delta t<1/(16K^2_z)$. Then, for $(X^{\tilde{\pi}},\mathcal{Y}^{\lambda,\pi},\mathcal{Z}^{\lambda,\pi})$ satisfying \eqref{emx} and \eqref{scheme}, we have
     \begin{equation*}
         \begin{aligned}
  \max_{i\in\{0,\cdots,n-1\}}\mathbb{E}\left|Y^\lambda_{t_i}-\mathcal{Y}^{\lambda,\pi}_{t_i}\right|^2+\sum^{n-1}_{i=0}\mathbb{E}\left[\int^{t_{i+1}}_{t_i}|Z^\lambda_s-\mathcal{Z}^{\lambda,\pi}_{t_i}|^2\,ds\right]\\
        \le C_2 \left(\mathbb{E}\left|g(X_T)-\mathcal{Y}^{\lambda,\pi}_T\right|^2+\lambda^2\Delta t\right).
        \end{aligned}
        \end{equation*}
    Furthermore, if $(X^{\tilde{\pi}},\mathcal{Y}^{\lambda,\pi},\mathcal{Z}^{\lambda,\pi})$ also satisfies \eqref{discterm}, then
         $$\max_{i\in\{0,\cdots,n-1\}}\mathbb{E}\left|Y^\lambda_{t_i}-\mathcal{Y}^{\lambda,\pi}_{t_i}\right|^2+\sum^{n-1}_{i=0}\mathbb{E}\left[\int^{t_{i+1}}_{t_i}|Z^\lambda_s-\mathcal{Z}^{\lambda,\pi}_{t_i}|^2\,ds\right]\\
        \le C_2 \lambda^2\Delta t.$$
\end{theorem}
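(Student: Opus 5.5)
We follow the classical one-step error analysis for backward Euler schemes (as in \cite{BT:04,BE:08}). The penalized driver $f_\lambda$ is Lipschitz in $y$ with constant $K_y+\lambda$, so we cannot use a Gronwall argument that sees $\lambda$ through the $y$-Lipschitz constant. Instead we exploit monotonicity: $y\mapsto \lambda(y-p_b)^- - \lambda(p_w-y)^-$ is nonincreasing. Hence in the energy estimate the penalty difference enters with a favorable sign, and only the $x$-dependence of the penalty costs a factor $\lambda$.

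\textbf{Step 1: local error decomposition.} Fix $i$ and set $\delta Y_i:=Y^\lambda_{t_i}-\mathcal{Y}^{\lambda,\pi}_{t_i}$. Write the penalized BSDE on $[t_i,t_{i+1}]$ and project. Taking $\mathbb{E}[\cdot\mid\mathcal{F}_{t_i}]$ of the equation gives the $Y$-recursion. Multiplying by $\Delta W_{i+1}$ and taking conditional expectations gives
\[
\bar Z_{t_i}:=\frac{1}{\Delta t}\mathbb{E}\Big[\int_{t_i}^{t_{i+1}}Z^\lambda_s\,ds\,\Big|\,\mathcal{F}_{t_i}\Big]
\]
up to a driver term, and we compare it with $\mathcal{Z}^{\lambda,\pi}_{t_i}$ via \eqref{discretization}. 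This yields
\[
\delta Y_i=\mathbb{E}[\delta Y_{i+1}\mid\mathcal{F}_{t_i}]+\Delta t\big(f_\lambda(t_i,X^{\tilde\pi}_{t_i},Y^\lambda_{t_i},\bar Z_{t_i})-f_\lambda(t_i,X^{\tilde\pi}_{t_i},\mathcal{Y}_{t_i},\mathcal{Z}_{t_i})\big)+R_i.
\]
The residual is
\[
R_i=\mathbb{E}\Big[\int_{t_i}^{t_{i+1}}\big(f_\lambda(s,X_s,Y^\lambda_s,Z^\lambda_s)-f_\lambda(t_i,X^{\tilde\pi}_{t_i},Y^\lambda_{t_i},\bar Z_{t_i})\big)ds\,\Big|\,\mathcal{F}_{t_i}\Big].
\]
We split the $f_\lambda$ difference into its $y$-part and its $(t,x,z)$-part. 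The $y$-part of the penalty is handled by monotonicity: $\delta Y_i\cdot(\text{penalty}(\cdot,Y^\lambda)-\text{penalty}(\cdot,\mathcal Y))\le 0$. This is the usual implicit-scheme trick of multiplying by $\delta Y_i$ and using Young's inequality, and it is where $\Delta t<1/(16K_z^2)$ is used to absorb the $z$-term. The $Z$ part is treated as in the standard proof: the $Z$-difference gives
\[
\Delta t\,|\bar Z_{t_i}-\mathcal{Z}_{t_i}|^2\le \mathbb{E}\big[|\delta Y_{i+1}|^2-|\mathbb{E}[\delta Y_{i+1}\mid\mathcal F_{t_i}]|^2\,\big|\,\mathcal F_{t_i}\big]+(\text{residual}).
\]
Summing, and using the telescoping structure plus a discrete Gronwall lemma (with constants independent of $\lambda$), gives
\[
\max_i\mathbb{E}|\delta Y_i|^2+\sum_i\Delta t\,\mathbb{E}|\bar Z_{t_i}-\mathcal{Z}_{t_i}|^2\le C_2\Big(\mathbb{E}|\delta Y_n|^2+\frac{1}{\Delta t}\sum_i\mathbb{E}|R_i|^2+\text{$L^2$-regularity of }Z^\lambda\Big).
\]

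\textbf{Step 2: bounding the residuals (the main obstacle).} $\frac{1}{\Delta t}\mathbb{E}|R_i|^2$ is bounded by $\int_{t_i}^{t_{i+1}}\mathbb{E}|f_\lambda(s,X_s,\ldots)-f_\lambda(t_i,X^{\tilde\pi}_{t_i},\ldots)|^2ds$. The contributions split as follows.
\begin{itemize}
\item The $f$ part gives $C_2\big(\Delta t^2+\Delta t\,\Delta t+\Delta t\,\Delta\tilde t+\int\mathbb{E}|Y^\lambda_s-Y^\lambda_{t_i}|^2+\int\mathbb{E}|Z^\lambda_s-\bar Z_{t_i}|^2\big)$, using \eqref{xdifbound}, \eqref{xdifbound2} and Corollary~\ref{apbcor}.
\item The penalty part, by Lipschitz continuity of $(\cdot)^-$, is bounded by $\lambda^2$ times $|p(s,X_s)-p(t_i,X^{\tilde\pi}_{t_i})|^2+|Y^\lambda_s-Y^\lambda_{t_i}|^2$. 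H\"older and Lipschitz continuity of $p_b,p_w$ together with \eqref{xdifbound}, \eqref{xdifbound2} and Corollary~\ref{apbcor} give $\sum_i\int\ldots\le C_2\lambda^2(\Delta t+\Delta\tilde t)\le C_2\lambda^2\Delta t$.
\end{itemize}
A crude bound suffices here, since the target is only $\lambda^2\Delta t$. The delicate point is to make sure $\lambda$ never appears in the Gronwall constant, which is exactly what the monotonicity in Step 1 ensures.

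\textbf{Step 3: $Z$-regularity and assembly.} The remaining term is $\sum_i\int_{t_i}^{t_{i+1}}\mathbb{E}|Z^\lambda_s-\bar Z_{t_i}|^2ds$. Since $\bar Z_{t_i}$ is the $L^2$-best $\mathcal F_{t_i}$-measurable constant approximation on each interval, we estimate this term by the $L^2$-regularity of $Z^\lambda$. We aim for a bound of order $C_2\lambda^2\Delta t$ via the representation $Z^\lambda=\sigma\nabla_xV^\lambda$ (Theorem~\ref{thmpde}) or Zhang's path-regularity argument. In that argument the driver's $x$-Lipschitz constant is $K_x+\lambda\max(K_{p_b},K_{p_w})$, which gives $C_2(1+\lambda^2)\Delta t$. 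Combined with Corollary~\ref{stoapb} and Proposition~\ref{xyzbound}, this yields the first inequality of the statement. The $Z$ error $\sum_i\mathbb{E}\int|Z^\lambda_s-\mathcal{Z}_{t_i}|^2$ then follows from the triangle inequality through $\bar Z_{t_i}$.

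For the second claim, we use \eqref{discterm}: by Lipschitz continuity of $g$ and \eqref{xdifbound},
\[
\mathbb{E}|g(X_T)-g(X^{\tilde\pi}_T)|^2\le C_2\Delta\tilde t\le C_2\Delta t\le C_2\lambda^2\Delta t.
\]
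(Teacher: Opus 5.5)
Your Steps 1--2 are sound. The monotonicity device is exactly the paper's: its factors $\eta_1,\eta_2\ge 0$ in \eqref{final1} are dropped via $1+\eta_1+\eta_2\ge 1$.

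\textbf{The gap is Step 3.} You freeze the driver at $\bar Z_{t_i}$ and compare $\mathcal Z^{\lambda,\pi}_{t_i}$ with $\bar Z_{t_i}$. As a result, both your $Y$-bound and your final $Z$-error (via the triangle inequality) depend on
\[
\sum_i\int_{t_i}^{t_{i+1}}\mathbb{E}|Z^\lambda_s-\bar Z_{t_i}|^2\,ds\le C_2\lambda^2\Delta t,
\]
which you assert rather than prove.

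The justification you give does not work. The constant in Zhang's path-regularity theorem depends on all Lipschitz and H\"older constants of the driver, not only the $x$-constant. For $f_\lambda$, the $y$-Lipschitz constant is also of order $\lambda$. In Zhang's proof that constant enters through Gronwall-type a priori bounds on the variational BSDE for $\nabla Y$. A black-box application therefore gives a constant growing exponentially in $\lambda$, not $C_2(1+\lambda^2)$.

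To get polynomial dependence you would have to redo the variational or Malliavin argument. You would exploit the sign $\partial_y(\text{penalty})\le 0$ in the linearized equation (so that, e.g., $\nabla_x V^\lambda$ stays bounded uniformly in $\lambda$), then track $\lambda$ through the time-increment estimates of $\nabla Y^\lambda$. You would also need to mollify the nonsmooth penalty and the merely Lipschitz barriers first. This is the same ``$\lambda$ must not enter Gronwall'' problem you solved in Step 1, reappearing one level down, and it is a substantial missing piece.

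The alternative via Theorem~\ref{thmpde} does not help. It gives only a bounded spatial gradient of $V^\lambda$, and only under uniform ellipticity, which Assumption~\ref{assumptionpde} does not impose. It says nothing about the time regularity of $Z^\lambda$, which is what you need.

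\textbf{How the paper avoids this.} It needs no $Z$-regularity at all. It uses that \eqref{scheme} holds as an exact pathwise identity,
\[
\mathcal Y^{\lambda,\pi}_{t_{i+1}}-\mathcal Y^{\lambda,\pi}_{t_i}+f_\lambda(\cdot)\Delta t=\mathcal Z^{\lambda,\pi}_{t_i}\Delta W_{i+1}.
\]
It also freezes the continuous driver only in $(t,x,y)$, namely at $f(t_i,X_{t_i},Y^\lambda_{t_i},Z^\lambda_s)$. The consequences are:
\begin{itemize}
\item The error equation \eqref{final1} carries $\int_{t_i}^{t_{i+1}}(Z^\lambda_s-\mathcal Z^{\lambda,\pi}_{t_i})^\top dW_s$ on the left.
\item Squaring and applying It\^o's isometry puts $\mathbb{E}\int_{t_i}^{t_{i+1}}|Z^\lambda_s-\mathcal Z^{\lambda,\pi}_{t_i}|^2ds$ directly on the left-hand side.
\item The $z$-Lipschitz contribution $K_z\int|Z^\lambda_s-\mathcal Z^{\lambda,\pi}_{t_i}|\,ds$ is absorbed there, via Cauchy--Schwarz and the Young parameter $16K_z^2\Delta t<1$.
\item The residual $r_i$ involves only increments in $s$, $X$ and $Y^\lambda$, which are controlled by \eqref{xdifbound2} and Corollary~\ref{apbcor}.
\end{itemize}
Your argument uses only the weaker consequences \eqref{discretization}. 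Under those alone, $\mathbb{E}\int|Z^\lambda_s-\mathcal Z^{\lambda,\pi}_{t_i}|^2ds\ge\mathbb{E}\int|Z^\lambda_s-\bar Z_{t_i}|^2ds$, so the $Z$-regularity term cannot be avoided. That is why your route is forced into the unproved estimate.
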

\begin{proof}
    We use the notations $\delta Y^\lambda_{t_i}:=Y^\lambda_{t_i}-\mathcal{Y}^{\lambda,\pi}_{t_i}$, $\delta X_{t_i}:=X_{t_i}-X^{\tilde{\pi}}_{t_i}$ and
    $$\delta f_{t_i}(s):=f\left(t_i,X_{t_i},Y^\lambda_{t_i},Z^\lambda_s\right)-f\left(t_i,X^{\tilde{\pi}}_{t_i},\mathcal{Y}^{\lambda,\pi}_{t_i},\mathcal{Z}^{\lambda,\pi}_{t_i}\right).$$
    From \eqref{pBSDE}, we have
    \begin{equation}
    \label{contdecomp}
        \begin{aligned}
            Y^\lambda_{t_i}=&Y^\lambda_{t_{i+1}}+\int^{t_{i+1}}_{t_i}f_\lambda\left(s,X_s,Y^\lambda_s,Z^\lambda_s\right)\,ds-\int^{t_{i+1}}_{t_i}(Z^\lambda_s)^\top\,dW_s\\
            =&Y^\lambda_{t_{i+1}}+\int^{t_{i+1}}_{t_i}f_\lambda\left(t_i,X_{t_i},Y^\lambda_{t_i},Z^\lambda_s\right)\,ds\\
            &+\int^{t_{i+1}}_{t_i}f_\lambda\left(s,X_s,Y^\lambda_s,Z^\lambda_s\right)-f_\lambda\left(t_i,X_{t_i},Y^\lambda_{t_i},Z^\lambda_s\right)\,ds-\int^{t_{i+1}}_{t_i}(Z^\lambda_s)^\top\,dW_s\\
            =&Y^\lambda_{t_{i+1}}+\int^{t_{i+1}}_{t_i}f\left(t_i,X_{t_i},Y^\lambda_{t_i},Z^\lambda_s\right)\,ds\\
            &+\lambda\left((Y^\lambda_{t_i}-p_b(t_i,X_{t_i}))^--(p_w(t_i,X_{t_i})-Y^\lambda_{t_i})^-\right)\Delta t-\int^{t_{i+1}}_{t_i}(Z^\lambda_s)^\top\,dW_s+r_i,
        \end{aligned}
    \end{equation}
    where 
    $$r_i:=\int^{t_{i+1}}_{t_i}f_\lambda\left(s,X_s,Y^\lambda_s,Z^\lambda_s\right)-f_\lambda\left(t_i,X_{t_i},Y^\lambda_{t_i},Z^\lambda_s\right)\,ds.$$

Thus, combining \eqref{scheme} and \eqref{contdecomp} yields
\begin{equation*}
    \begin{aligned}
        &\delta Y^\lambda_{t_i}\\
        =&\delta Y^\lambda_{t_{i+1}}+\int^{t_{i+1}}_{t_i}\delta f_{t_i}(s)\,ds+\lambda\left((Y^\lambda_{t_i}-p_b(t_i,X_{t_i}))^--(\mathcal{Y}^{\lambda,\pi}_{t_i}-p_b(t_i,X^{\tilde{\pi}}_{t_i}))^-\right)\Delta t\\
        &+\lambda\left((p_w(t_i,X_{t_i})-Y^\lambda_{t_i})^--(p_w(t_i,X^{\tilde{\pi}}_{t_i})-\mathcal{Y}^{\lambda,\pi}_{t_i})^-\right)\Delta t \\
        &\qquad\quad-\int^{t_{i+1}}_{t_i}(Z^\lambda_s-\mathcal{Z}^{\lambda,\pi}_{t_i})^\top\,dW_s+r_i.
    \end{aligned}
\end{equation*}
Let
$$\eta_1:=-\lambda \Delta t\frac{\left(Y^\lambda_{t_i}-p_b(t_i,X_{t_i})\right)^--\left(\mathcal{Y}^{\lambda,\pi}_{t_i}-p_b(t_i,X_{t_i})\right)^-}{\delta Y^\lambda_{t_i}}\mathbf{1}_{\delta Y^\lambda_{t_i}\ne0}\ge 0,$$
$$\eta_2:=\lambda \Delta t\frac{\left(p_w(t_i,X_{t_i})-Y^\lambda_{t_i}\right)^--\left(p_w(t_i,X_{t_i})-\mathcal{Y}^{\lambda,\pi}_{t_i}\right)^-}{\delta Y^\lambda_{t_i}}\mathbf{1}_{\delta Y^\lambda_{t_i}\ne0}\ge 0,$$
and $\eta=\eta_3+\eta_4$, where
$$\eta_3:=(\mathcal{Y}^{\lambda,\pi}_{t_i}-p_b(t_i,X_{t_i}))^--(\mathcal{Y}^{\lambda,\pi}_{t_i}-p_b(t_i,X^{\tilde{\pi}}_{t_i}))^-$$
and
$$\eta_4:=(p_w(t_i,X_{t_i})-\mathcal{Y}^{\lambda,\pi}_{t_i})^--(p_w(t_i,X^{\tilde{\pi}}_{t_i})-\mathcal{Y}^{\lambda,\pi}_{t_i})^-.$$
Then, $|\eta|<C_2|\delta X_{t_i}|$ by Lipschitz continuity, and we obtain
\begin{equation}
\label{final1}
    \begin{aligned}
    (1+\eta_1+\eta_2)\delta Y^\lambda_{t_i}+\int^{t_{i+1}}_{t_i}(Z^\lambda_s-\mathcal{Z}^{\lambda,\pi}_{t_i})^\top\,dW_s\\=\delta Y^\lambda_{t_{i+1}}+\int^{t_{i+1}}_{t_i}\delta f_{t_i}(s)\,ds+ r_i+\lambda\Delta t\,\eta.\\
    \end{aligned}
\end{equation}
Thus, squaring both side of \eqref{final1} and taking expectation, and then using It\^o isometry yields
\begin{equation}
\label{final2}
    \begin{aligned}
        &\mathbb{E}\left[(1+\eta_1+\eta_2)^2(\delta Y^{\lambda}_{t_i})^2\right]+\mathbb{E}\left[\int^{t_{i+1}}_{t_i}\left|Z^\lambda_s-\mathcal{Z}^{\lambda,\pi}_{t_i}\right|^2\,ds\right]\\
        =&\mathbb{E}\left[\left(\delta Y^\lambda_{t_{i+1}}+\int^{t_{i+1}}_{t_i}\delta f_{t_i}(s)\,ds+ r_i+\eta\lambda\Delta t\right)^2\right].
    \end{aligned}
\end{equation}
Since $1+\eta_1+\eta_2>1$, applying Young's inequality to the right-hand side of \eqref{final2} gives us
\begin{equation}
    \label{final3}
    \begin{aligned}
        &\mathbb{E}\left|\delta Y^{\lambda}_{t_i}\right|^2+\mathbb{E}\left[\int^{t_{i+1}}_{t_i}\left|Z^\lambda_s-\mathcal{Z}^{\lambda,\pi}_{t_i}\right|^2\,ds\right]\\
        \le&(1+16K^2_z\Delta t)\mathbb{E}\left|\delta Y^\lambda_{t_{i+1}}\right|^2\\&+3\left(1+\frac{1}{16K^2_z\Delta t}\right)\mathbb{E}\left[\left(\int^{t_{i+1}}_{t_i}\delta f_{t_i}(s)\,ds\right)^2+|r_i|^2+\lambda^2\Delta t^2\eta^2\right]
    \end{aligned}
\end{equation}

Let us estimate $\mathbb{E}\left[\left(\int^{t_{i+1}}_{t_i}\delta f_{t_i}(s)\,ds\right)^2\right]$. By the continuity property of $f$ and Young's inequality, we have
\begin{equation}
\label{fterm}
\begin{aligned}&\mathbb{E}\left[\left(\int^{t_{i+1}}_{t_i}\delta f_{t_i}(s)\,ds\right)^2\right]\\
\le&\mathbb{E}\left[\left(\int^{t_{i+1}}_{t_i}\left|\delta f_{t_i}(s)\right|\,ds\right)^2\right]\\
\le& \mathbb{E}\Bigg[\Big(\int^{t_{i+1}}_{t_i}K_x|\delta X_{t_i}|+\int^{t_{i+1}}_{t_i} K_y\left|\delta Y^{\lambda}_{t_i}\right|+K_z\int^{t_{i+1}}_{t_i}|Z^\lambda_s-\mathcal{Z}^{\lambda,\pi}_{t_i}|\,ds\Big)^2\Bigg]\\
\le&4\mathbb{E}\Big|\int^{t_{i+1}}_{t_i}K_x|\delta X_{t_i}|\,ds\Big|^2+4\mathbb{E}\left|\int^{t_{i+1}}_{t_i} K_y\left|\delta Y^{\lambda}_{t_i}\right|\,ds\right|^2\\
&\qquad+2K^2_z\,\mathbb{E}\left|\int^{t_{i+1}}_{t_i}|Z^\lambda_s-\mathcal{Z}^{\lambda,\pi}_{t_i}|\,ds\right|^2
\end{aligned}
\end{equation}
We now bound each term on the right hand side of \eqref{fterm}. First, applying the Cauchy--Schwarz inequality, Young's inequality, and \eqref{xdifbound}, we have
\begin{equation}
    \label{fterm1}
    \begin{aligned}
        &4\mathbb{E}\Big|\int^{t_{i+1}}_{t_i}K_x|\delta X_{t_i}|\,ds\Big|^2\\
        \le&4K^2_x \mathbb{E}\left[\left(\int^{t_{i+1}}_{t_i}1\,ds\right)\left(\int^{t_{i+1}}_{t_i}|\delta X_{t_i}|^2\,ds\right)\right]\\
        \le&C_2 \Delta t\left(\left|\int^{t_{i+1}}_{t_i}\mathbb{E}|\delta X_{t_i}|^2\,ds\right|\right)\\
        \le&C_2\Delta t^2\Delta \tilde{t}\\
        \le&C_2\Delta t^3
    \end{aligned}
\end{equation}
For the second term, we find
\begin{equation}
    \label{fterm2}
    \begin{aligned}
        4\mathbb{E}\left|\int^{t_{i+1}}_{t_i} K_y\left|\delta Y^{\lambda}_{t_i}\right|\,ds\right|^2=4\mathbb{E}\left|K_y\left|\delta Y^{\lambda}_{t_i}\right|\int^{t_{i+1}}_{t_i} \,ds\right|^2=4K^2_y\Delta t^2\mathbb{E}\left|\delta Y^{\lambda}_{t_i}\right|^2.
    \end{aligned}
\end{equation}
We apply the Cauchy--Schwarz inequality to the third term to obtain
\begin{equation}
    \label{fterm3}
    \begin{aligned}
        2K^2_z\,\mathbb{E}\left|\int^{t_{i+1}}_{t_i}|Z^\lambda_s-\mathcal{Z}^{\lambda,\pi}_{t_i}|\,ds\right|^2
        \le&2K^2_z\,\mathbb{E}\left[\left(\int^{t_{i+1}}_{t_i}1^2\,ds\right)\left(\int^{t_{i+1}}_{t_i}|Z^\lambda_s-\mathcal{Z}^{\lambda,\pi}_{t_i}|^2\,ds\right)\right]\\
        =&2K^2_z\,\Delta t\,\mathbb{E}\left[\int^{t_{i+1}}_{t_i}|Z^\lambda_s-\mathcal{Z}^{\lambda,\pi}_{t_i}|^2\,ds\right].
    \end{aligned}
\end{equation}
Plugging \eqref{fterm1}, \eqref{fterm2}, and \eqref{fterm3} into \eqref{fterm} yields
\begin{equation}
    \label{ftermfinal}
    \begin{aligned}
        \mathbb{E}\left[\left(\int^{t_{i+1}}_{t_i}\delta f_{t_i}(s)\,ds\right)^2\right]
        \le& C_2\Delta t^3+4K^2_y\Delta t^2\mathbb{E}\left|\delta Y^{\lambda}_{t_i}\right|\\
        &+2K^2_z\,\Delta t\,\mathbb{E}\left[\int^{t_{i+1}}_{t_i}|Z^\lambda_s-\mathcal{Z}^{\lambda,\pi}_{t_i}|^2\,ds\right].
    \end{aligned}
\end{equation}

Thus,  \eqref{ftermfinal}, \eqref{final3}, and $\mathbb{E}[\eta^2]\le\mathbb{E}[\delta X^2_{t_i}]<C_2\Delta \tilde{t}$ gives us
\begin{equation}
    \label{final4}
    \begin{aligned}
    &\mathbb{E}\left|\delta Y^{\lambda}_{t_i}\right|^2+\frac{1}{2}\mathbb{E}\left[\int^{t_{i+1}}_{t_i}\left|Z^\lambda_s-\mathcal{Z}^{\lambda,\pi}_{t_i}\right|^2\,ds\right]\\
    \le&(1+16K^2_z\Delta t)\mathbb{E}\left|\delta Y^\lambda_{t_{i+1}}\right|^2\\
    &+C_2\Delta t^2
   +\left(\frac{K_{y}}{K_z}\right)^2\Delta t\,\mathbb{E}\left|\delta Y^\lambda_{t_{i}}\right|^2\\&\quad+\frac{C_2}{\Delta t}\,\mathbb{E}\left|r_i\right|^2+C_2\lambda^2\Delta t\,\Delta \tilde{t}.
    \end{aligned}
\end{equation}
Since $(1+16K^2_z\Delta t)^k<e^{16K^2_zT}$ for $k\le n$, induction on \eqref{final4} gives us
\begin{equation}
    \label{final5}
    \begin{aligned}
        &\mathbb{E}\left|\delta Y^{\lambda}_{t_i}\right|^2+\frac{1}{2}\mathbb{E}\left[\int^{t_{i+1}}_{t_i}|Z^\lambda_s-\mathcal{Z}^{\lambda,\pi}_{t_i}|^2\,ds\right]\\
        \le&e^{16K^2_zT}\,\mathbb{E}\left|\delta Y^\lambda_{T}\right|^2\\
    &+C_2\left(\Delta t+\frac{1}{\Delta t}\,\sum^{n-1}_{j=i}\mathbb{E}|r_j|^2\right)\\&
    +C_2\lambda^2\Delta \tilde{t}+2e^{16K^2_zT}\left(\frac{K_{y}}{K_z}\right)^2\,\sum^{n-1}_{j=i}\Delta t\,\mathbb{E}\left|\delta Y^\lambda_{t_{j+1}}\right|^2.
    \end{aligned}
\end{equation}
Therefore, applying Proposition~\ref{xyzbound} and Gr\"onwall's inequality to \eqref{final5} yields
\begin{equation}
    \label{final6}
    \begin{aligned}
        &\mathbb{E}\left|\delta Y^{\lambda}_{t_i}\right|^2+\mathbb{E}\left[\int^{t_{i+1}}_{t_i}|Z^\lambda_s-\mathcal{Z}^{\lambda,\pi}_{t_i}|^2\,ds\right]\\
        \le& C_2\left(\mathbb{E}\left|\delta Y^\lambda_{T}\right|^2+\Delta t+\frac{1}{\Delta t}\,\sum^{n-1}_{j=i}\mathbb{E}|r_j|^2\right)+C_2\lambda^2\Delta \tilde{t}
        \end{aligned}
\end{equation}

Let us estimate the term $\sum^{n-1}_{j=i}\mathbb{E}|r_j|^2$. Applying Young's inequality, continuity properties of $f$, Proposition~\ref{xyzbound}, and Corollary~\ref{apbcor}, we calculate
\begin{equation}
\label{rterm}
\begin{aligned}&\sum^{n-1}_{j=i}\mathbb{E}|r_j|^2\\
\le& \sum^{n-1}_{i=0}\mathbb{E}\Bigg|\int^{t_{i+1}}_{t_i}f\left(s,X_s,Y^\lambda_s,Z^\lambda_s\right)-f\left(t_i,X_{t_i},Y^\lambda_{t_i},Z^\lambda_s\right)\,ds\\
&+\int^{t_{i+1}}_{t_i}\lambda\left(\left(Y^\lambda_s-p_b(s,X_s)\right)^--\left(Y^\lambda_{t_i}-p_b(t_i,X_{t_i})\right)^-\right)\,ds\\
&+\int^{t_{i+1}}_{t_i}\lambda\left(\left(p_w(s,X_s)-Y^\lambda_s\right)^--\left(p_w(t_i,X_{t_i})-Y^\lambda_{t_i})\right)^-\right)\,ds\Bigg|^2\\
\le& 3\,\sum^{n-1}_{i=0}\mathbb{E}\Bigg|\int^{t_{i+1}}_{t_i}f\left(s,X_s,Y^\lambda_s,Z^\lambda_s\right)-f\left(t_i,X_{t_i},Y^\lambda_{t_i},Z^\lambda_s\right)\,ds\Bigg|^2\\
&+3\,\sum^{n-1}_{i=0}\mathbb{E}\left|\int^{t_{i+1}}_{t_i}\lambda\left(\left(Y^\lambda_s-p_b(s,X_s)\right)^--\left(Y^\lambda_{t_i}-p_b(t_i,X_{t_i})\right)^-\right)\,ds\right|^2\\
&+3\,\sum^{n-1}_{i=0}\mathbb{E}\left|\int^{t_{i+1}}_{t_i}\lambda\left(\left(p_w(s,X_s)-Y^\lambda_s\right)^--\left(p_w(t_i,X_{t_i})-Y^\lambda_{t_i})\right)^-\right)\,ds\right|^2\\
\end{aligned}
\end{equation}
Using \eqref{xdifbound2} and applying Young's inequality, the Cauchy--Schwarz inequality, and Corollary~\ref{apbcor}, we have
\begin{equation}
    \label{rterm1}
    \begin{aligned}
    &3\,\sum^{n-1}_{i=0}\mathbb{E}\Bigg|\int^{t_{i+1}}_{t_i}f\left(s,X_s,Y^\lambda_s,Z^\lambda_s\right)-f\left(t_i,X_{t_i},Y^\lambda_{t_i},Z^\lambda_s\right)\,ds\Bigg|^2\\
    \le&3\,\sum^{n-1}_{i=0}\mathbb{E}\Bigg|\int^{t_{i+1}}_{t_i}K_{f,\frac{1}{2}}\sqrt{s-t_i}+K_x|X_s-X_{t_i}|+K_y|Y^\lambda_s-Y^\lambda_{t_i}|\,ds\Bigg|^2\\
    \le& C_2\sum^{n-1}_{i=0}\mathbb{E}\Bigg|\int^{t_{i+1}}_{t_i}K_{f,\frac{1}{2}}\sqrt{s-t_i}+K_x|X_s-X_{t_i}|\,ds\Bigg|^2\\
    &+C_2\sum^{n-1}_{i=0}\mathbb{E}\Bigg|\int^{t_{i+1}}_{t_i}K_y|Y^\lambda_s-Y^\lambda_{t_i}|\,ds\Bigg|^2\\
    \le&C_2\sum^{n-1}_{i=0}\Delta t^3+C_2\Delta t\sum^{n-1}_{i=0}\int^{t_{i+1}}_{t_i}\mathbb{E}\left|Y^\lambda_s-Y^\lambda_{t_i}\right|^2\,ds\\
    \le& C_2 \Delta t^2.
    \end{aligned}
\end{equation}
Also, by Proposition~\ref{apbcor} and the Cauchy--Schwarz inequality, we find that
\begin{equation}
\label{rterm2}
\begin{aligned}
&3\,\sum^{n-1}_{i=0}\mathbb{E}\left|\int^{t_{i+1}}_{t_i}\lambda\left(\left(Y^\lambda_s-p_b(s,X_s)\right)^--\left(Y^\lambda_{t_i}-p_b(t_i,X_{t_i}))\right)^-\right)\,ds\right|^2\\
\le&C_2\lambda^2\,\sum^{n-1}_{i=0}\mathbb{E}\left|\Delta t\,\int^{t_{i+1}}_{t_i}\left|Y^\lambda_s-Y^\lambda_{t_i}\right|^2+K^2_{b,\frac{1}{2}}|s-t_i|+K^2_b|X_s-X_{t_i}|^2\,ds\right|\\
\le&C_2\lambda^2\Delta t\sum^{n-1}_{i=0}\left(\int^{t_{i+1}}_{t_i}\mathbb{E}\left|Y^\lambda_s-Y^\lambda_{t_i}\right|^2\,ds+\Delta t ^2+\int^{t_{i+1}}_{t_i}\mathbb{E}\left|X_s-X_{t_i}\right|^2\,ds\right)\\
\le&C_2\lambda^2\Delta t\sum^{n-1}_{i=0}\left(\int^{t_{i+1}}_{t_i}\mathbb{E}\left|Y^\lambda_s-Y^\lambda_{t_i}\right|^2\,ds+\Delta t^2\right)\\
\le& C_2\lambda^2\,\Delta t^2.
\end{aligned}
\end{equation}
By the same argument, we also have
\begin{equation}
    \label{rterm3}
    \begin{aligned}
    3\,\sum^{n-1}_{i=0}\mathbb{E}\left|\int^{t_{i+1}}_{t_i}\lambda\left(\left(p_w(s,X_s)-Y^\lambda_s\right)^--\left(p_w(t_i,X_{t_i})-Y^\lambda_{t_i})\right)^-\right)\,ds\right|^2\le C_2\lambda^2\,\Delta t^2.
    \end{aligned}
\end{equation}
Thus, plugging in \eqref{rterm1}, \eqref{rterm2}, and \eqref{rterm3} into \eqref{rterm}, we arrive at
\begin{equation}
    \label{rtermr}
    \sum^{n-1}_{j=i}\mathbb{E}|r_j|^2\le C_2\,\lambda^2\Delta t^2.
\end{equation}

Finally, plugging in \eqref{rtermr} into \eqref{final6}, we obtain 
$$\mathbb{E}|\delta Y^\lambda_{t_i}|^2<C_2(\mathbb{E}|\delta Y^\lambda_T|^2+\lambda^2\Delta t).$$
Plugging this into \eqref{final4}, we obtain
\begin{equation*}
    \begin{aligned}
    &\mathbb{E}\left|\delta Y^{\lambda}_{t_i}\right|^2+\frac{1}{2}\mathbb{E}\left[\int^{t_{i+1}}_{t_i}\left|Z^\lambda_s-\mathcal{Z}^{\lambda,\pi}_{t_i}\right|^2\,ds\right]\\
    \le&\mathbb{E}\left|\delta Y^\lambda_{t_{i+1}}\right|^2+C_2(\mathbb{E}|\delta Y^\lambda_T|^2+\lambda^2\Delta t)\Delta t\\
    &+C_2\lambda\Delta t^2+\left(\frac{K_{y}}{K_z}\right)^2\Delta t\,\mathbb{E}\left|\delta Y^\lambda_{t_{i}}\right|^2+\frac{C_2}{\Delta t}\,\mathbb{E}\left|r_i\right|^2+C_2\lambda^2\Delta t^2.
    \end{aligned}
\end{equation*}
Summing up , we arrive at
\begin{equation*}
    \begin{aligned}
    &\sum^{n-1}_{j=i}\left(\mathbb{E}\left|\delta Y^{\lambda}_{t_j}\right|^2+\frac{1}{2}\mathbb{E}\left[\int^{t_{j+1}}_{t_j}\left|Z^\lambda_s-\mathcal{Z}^{\lambda,\pi}_{t_j}\right|^2\,ds\right]\right)\\
    \le&\sum^{n-1}_{j=i}\mathbb{E}\left|\delta Y^\lambda_{t_{j+1}}\right|^2+C_2(\mathbb{E}|\delta Y^\lambda_T|^2+\lambda^2\Delta t)\\
    &+C_2\lambda\Delta t+\sum^{n-1}_{j=i}\left(\frac{K_{y}}{K_z}\right)^2\Delta t\,\mathbb{E}\left|\delta Y^\lambda_{t_{j}}\right|^2+\frac{C_2}{\Delta t}\,\sum^{n-1}_{j=i}\mathbb{E}\left|r_j\right|^2+C_2\lambda^2\Delta t.
    \end{aligned}
\end{equation*}
Plugging in \eqref{rterm} again, we find
\begin{equation*}
    \begin{aligned}
    &\mathbb{E}\left|\delta Y^{\lambda}_{t_i}\right|^2+\frac{1}{2}\sum^{n-1}_{i=j}\mathbb{E}\left[\int^{t_{j+1}}_{t_j}\left|Z^\lambda_s-\mathcal{Z}^{\lambda,\pi}_{t_j}\right|^2\,ds\right]\\
    \le&C_2\mathbb{E}\left|\delta Y^\lambda_T\right|^2\\
    &+C_2\lambda^2\Delta t+\sum^{n-1}_{j=i}\left(\frac{K_{y}}{K_z}\right)^2\Delta t\,\mathbb{E}\left|\delta Y^\lambda_{t_{j}}\right|^2+\frac{C_2}{\Delta t}\,\sum^{n-1}_{j=i}\mathbb{E}\left|r_j\right|^2+C_2\lambda^2\Delta t\\
    \le& C_2\left(\mathbb{E}|\delta Y^\lambda_T|^2+\lambda^2\Delta t+\sum^{n-1}_{j=i}\Delta t \mathbb{E}\left|\delta Y^{\lambda}_{t_i}\right|^2\right).
    \end{aligned}
\end{equation*}
Applying the discrete Gr\"onwall inequality again, we obtain the first desired inequality.

Furthermore, if \eqref{discterm} holds, then by the Lipschitz property of $\bar{g}$ and \eqref{xdifbound}, it is clear that
\begin{equation}
    \label{finalterm}
    \mathbb{E}\left|\delta Y^\lambda_{T}\right|^2\le C_2\Delta t.
\end{equation}
Plugging \eqref{finalterm} into the first inequality yields the second desired inequality.

\end{proof}

Combining the penalty error from Theorem~\ref{penaltyerror} and discretization error from Theorem~\ref{discerror}, we obtain the total error as follows.
\begin{corollary}
      Suppose Assumptions \ref{assumptions}, \ref{assumptionpde}, and \ref{assumption2} hold and let $(X^{\tilde{\pi}},\mathcal{Y}^{\lambda,\pi},\mathcal{Z}^{\lambda,\pi})$ satisfy \eqref{emx} and \eqref{scheme}. Let $(X,Y,Z,A,K)$ denote the solution to \eqref{BSDE} and choose $\lambda>2K_y$ and $\Delta t<1/(16K^2_z)$. Then, we have
         $$\left(\max_{i\in\{0,\cdots,n-1\}}\mathbb{E}\left|Y_{t_i}-\mathcal{Y}^{\lambda,\pi}_{t_i}\right|^2\right)^\frac{1}{2}
        \le C_2 \left((\mathbb{E}|g(X_T)-\mathcal{Y}^{\lambda,\pi}_T|^2)^\frac{1}{2}+\lambda\Delta t^\frac{1}{2}+\frac{1}{\lambda}\right)$$
        and 
        \begin{equation*}
        \begin{aligned}&\left(\sum^{n-1}_{i=0}\mathbb{E}\left[\int^{t_{i+1}}_{t_i}|Z_s-\mathcal{Z}^{\lambda,\pi}_{t_i}|^2\,ds\right]\right)^{\frac{1}{2}}
        \\\le& C_2 \left((\mathbb{E}|g(X_T)-\mathcal{Y}^{\lambda,\pi}_T|^2)^\frac{1}{2}+\lambda\Delta t^\frac{1}{2}+\frac{1}{\sqrt{\lambda}}\right)
        \end{aligned}
        \end{equation*}
Furthermore, if $(X^{\tilde{\pi}},\mathcal{Y}^{\lambda,\pi},\mathcal{Z}^{\lambda,\pi})$ also satisfies \eqref{discterm}, we have
         $$\left(\max_{i\in\{0,\cdots,n-1\}}\mathbb{E}\left|Y_{t_i}-\mathcal{Y}^{\lambda,\pi}_{t_i}\right|^2\right)^\frac{1}{2}
        \le C_2 \left(\lambda\Delta t^\frac{1}{2}+\frac{1}{\lambda}\right)$$
        and 
        \begin{equation*}
        \begin{aligned}&\left(\sum^{n-1}_{i=0}\mathbb{E}\left[\int^{t_{i+1}}_{t_i}|Z_s-\mathcal{Z}^{\lambda,\pi}_{t_i}|^2\,ds\right]\right)^{\frac{1}{2}}
        \le C_2 \left(\lambda\Delta t^\frac{1}{2}+\frac{1}{\sqrt{\lambda}}\right)
        \end{aligned}
        \end{equation*}
\end{corollary}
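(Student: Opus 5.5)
The plan is to split each error by the triangle inequality at the penalized solution $(Y^\lambda,Z^\lambda)$ of \eqref{pBSDE}. Theorem~\ref{penaltyerror} bounds the penalization part. Theorem~\ref{discerror} bounds the discretization part.

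For the value process, fix $i\in\{0,\dots,n-1\}$ and use $(a+b)^2\le 2a^2+2b^2$:
\[
\mathbb{E}|Y_{t_i}-\mathcal{Y}^{\lambda,\pi}_{t_i}|^2\le 2\,\mathbb{E}|Y_{t_i}-Y^\lambda_{t_i}|^2+2\,\mathbb{E}|Y^\lambda_{t_i}-\mathcal{Y}^{\lambda,\pi}_{t_i}|^2 .
\]
The first term is at most $2\,\mathbb{E}[\sup_t|Y_t-Y^\lambda_t|^2]\le C_2\lambda^{-2}$ by Theorem~\ref{penaltyerror}. One could also use the almost sure bound \eqref{boundineq} of Theorem~\ref{ydiffbound} directly. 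The second term is at most $C_2(\mathbb{E}|g(X_T)-\mathcal{Y}^{\lambda,\pi}_T|^2+\lambda^2\Delta t)$ by the first inequality of Theorem~\ref{discerror}; its hypotheses $\lambda>2K_y$ and $\Delta t<1/(16K_z^2)$ are exactly those assumed here. Take the maximum over $i$ and then square roots. Since $\sqrt{a+b+c}\le\sqrt a+\sqrt b+\sqrt c$, this gives the bound $C_2\big((\mathbb{E}|g(X_T)-\mathcal{Y}^{\lambda,\pi}_T|^2)^{1/2}+\lambda\Delta t^{1/2}+\lambda^{-1}\big)$.

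For the $Z$-component, write $Z_s-\mathcal{Z}^{\lambda,\pi}_{t_i}=(Z_s-Z^\lambda_s)+(Z^\lambda_s-\mathcal{Z}^{\lambda,\pi}_{t_i})$ on each $[t_i,t_{i+1})$ and sum over $i$:
\[
\sum_{i=0}^{n-1}\mathbb{E}\int_{t_i}^{t_{i+1}}|Z_s-\mathcal{Z}^{\lambda,\pi}_{t_i}|^2ds\le 2\int_0^T\mathbb{E}|Z_s-Z^\lambda_s|^2ds+2\sum_{i=0}^{n-1}\mathbb{E}\int_{t_i}^{t_{i+1}}|Z^\lambda_s-\mathcal{Z}^{\lambda,\pi}_{t_i}|^2ds .
\]
The first term is at most $C_2/\lambda$ by Theorem~\ref{penaltyerror}. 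The second is controlled by Theorem~\ref{discerror} exactly as for $Y$. Taking square roots produces the $\lambda^{-1/2}$ term in place of $\lambda^{-1}$.

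For the case where \eqref{discterm} also holds, use the second inequality of Theorem~\ref{discerror} instead. It already absorbs the terminal error through $\mathbb{E}|g(X_T)-g(X^{\tilde\pi}_T)|^2\le C_2\Delta\tilde t\le C_2\Delta t$, which follows from \eqref{finalterm}. Since $\lambda>2K_y$, this $C_2\Delta t$ is dominated by $C_2\lambda^2\Delta t$ after enlarging $C_2$. No real obstacle is expected. The only care needed is to confirm that all constants are uniform in $\lambda,n,m$, as stated in the convention for $C_2$ in Section~\ref{sec:ea}, and that the $\mathbb{S}^2$ bound covers every grid time $t_i$.
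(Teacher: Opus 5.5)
Your proposal is correct and matches the paper's argument: the corollary is obtained by splitting at $(Y^\lambda,Z^\lambda)$ with $(a+b)^2\le 2a^2+2b^2$ and combining Theorem~\ref{penaltyerror} (giving $C_2\lambda^{-2}$ for $Y$ and $C_2\lambda^{-1}$ for $Z$) with the two inequalities of Theorem~\ref{discerror}. Since the second inequality of Theorem~\ref{discerror} is already stated as $C_2\lambda^2\Delta t$, you can use it directly; the remark about dominating $C_2\Delta t$ is not needed.
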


In particular, if $\lambda=\Delta t^{-\frac{1}{2}}$, we obtain the minimal error rate
$$\left(\max_{i\in\{0,\cdots,n-1\}}\mathbb{E}\left|Y_{t_i}-\mathcal{Y}^{\lambda,\pi}_{t_i}\right|^2\right)^{\frac{1}{2}}
        \le C_2 \Delta t^\frac{1}{4}$$
for the value process. This is suboptimal compared to the error rate of $O(\Delta t^{\frac{1}{2}})$ for conventional approximation schemes.
\subsection{Absolute error bound for Z-independent driver}
We consider the case in which the driver $f$ is independent of $Z$, and derive an absolute
error bound for the value process. In the nonsmooth setting, the Tanaka--Peskir decomposition
of the barriers produces surface local-time terms supported on the singular hypersurfaces
$\Gamma_1,\dots,\Gamma_q$. To control these terms uniformly, we impose the following
nondegeneracy condition in the normal direction to each $\Gamma_k$.

\begin{assumption}\label{assumption_surface}
For each $k\in\{1,\dots,q\}$, either there exist an open neighborhood $U_k$ of $\Gamma_k$
and a constant $\eta_k>0$ such that
\[
\nabla\varphi_k(x)^\top \sigma(t,x)\sigma(t,x)^\top \nabla\varphi_k(x)
\ge \eta_k |\nabla\varphi_k(x)|^2,
\qquad (t,x)\in [0,T]\times U_k,
\]
or, defining
\[
\tau_{\Gamma_k}:=\inf\{t\in[0,T]:X_t\in\Gamma_k\},
\]
we have
\[
P(\tau_{\Gamma_k}\le T)=0.
\]
\end{assumption}

Assumption~\ref{assumption_surface} is used only to control the expected surface
local-time terms arising in the Tanaka--Peskir formula. It is not imposed for mere finiteness
of the local times, but to obtain a uniform deterministic bound on their expectations.

As before, we work under Assumptions~\ref{assumptions}, \ref{assumptionpde},
\ref{assumption2}, and \ref{assumption_surface}. For each $k=1,\cdots,q$,
let $\delta_k$ be the signed distance to $\Gamma_k$ on a tubular neighborhood and define
\[
L_t^k:=L_t^0\big(\delta_k(X)\big),\qquad t\in[0,T],
\]
where $L^0(\cdot)$ denotes the symmetric semimartingale local time at $0$.
For $j\in\{b,w\}$ define
\[
A_{j,t}^\Gamma
:=\frac12\sum_{k=1}^q\int_0^t \big[\partial_{n_k}p_j\big](s,X_s)\,dL_s^k,
\qquad t\in[0,T],
\]
with $[\partial_{n_k}p_j](t,x):=\partial_{n_k}p_j(t,x+)-\partial_{n_k}p_j(t,x-)$.
 Under Assumption~\ref{assumption_surface}, it is standard that
\begin{equation}
    \label{lbound}
    \mathbb{E}[L^k_t]<C_2,\quad \forall t\in[0,T],\ \forall k=1,\cdots,q.
\end{equation}
See e.g., \cite{LGSM18}.

The following is an extended version of Theorem 2 of \cite{PWW:24}.
\begin{theorem}
\label{better}
    Let Assumptions \ref{assumptions},\ref{assumptionpde}, \ref{assumption2}, and \ref{assumption_surface} hold. Suppose $f(t,x,y,z)=f(t,x,y,0)$ for all $(t,x,y,z)\in[0,T]\times\mathbb{R}^d\times \mathbb{R}\times\mathbb{R}^d$.  We denote by $(X,Y^\lambda,Z^\lambda)$ the solution to \eqref{pBSDE} and choose $\lambda>2K_y$. Then, for the processes $(X^{\tilde{\pi}},\mathcal{Y}^{\lambda,\pi},\mathcal{Z}^{\lambda,\pi})$ satisfying \eqref{emx} and \eqref{scheme}, we have
     $$\max_{i\in\{0,\cdots,n\}}\mathbb{E}\left|Y^\lambda_{t_i}-\mathcal{Y}^{\lambda,\pi}_{t_i}\right|
        \le C_2 \left(\mathbb{E}|g(X_T)-\mathcal{Y}^{\lambda,\pi}_T|+\Delta t^\frac{1}{2}+\lambda\Delta \tilde{t}^\frac{1}{2}+\lambda\Delta t\right).$$
    Furthermore, if $(X^{\tilde{\pi}},\mathcal{Y}^{\lambda,\pi},\mathcal{Z}^{\lambda,\pi})$  satisfies \eqref{discterm}, we obtain
         $$\max_{i\in\{0,\cdots,n\}}\mathbb{E}\left|Y^\lambda_{t_i}-\mathcal{Y}^{\lambda,\pi}_{t_i}\right|
        \le C_2 \left(\Delta t^\frac{1}{2}+\lambda\Delta \tilde{t}^\frac{1}{2}+\lambda\Delta t\right).$$
\end{theorem}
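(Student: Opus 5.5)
The proof will be an $L^1$ (absolute-error) analysis of the one-step recursion, exploiting the monotone structure of the penalty. Because $f$ does not depend on $z$, the $\mathcal{Z}$-component drops out of the $Y$-recursion after conditioning. Taking $\mathcal{F}_{t_i}$-conditional expectations in \eqref{scheme} gives
\[
\mathcal{Y}^{\lambda,\pi}_{t_i}=\mathbb{E}\bigl[\mathcal{Y}^{\lambda,\pi}_{t_{i+1}}\mid\mathcal{F}_{t_i}\bigr]+f_\lambda\bigl(t_i,X^{\tilde\pi}_{t_i},\mathcal{Y}^{\lambda,\pi}_{t_i}\bigr)\Delta t .
\]
Likewise \eqref{contdecomp} gives
\[
Y^\lambda_{t_i}=\mathbb{E}\bigl[Y^\lambda_{t_{i+1}}\mid\mathcal{F}_{t_i}\bigr]+f_\lambda(t_i,X_{t_i},Y^\lambda_{t_i})\Delta t+\mathbb{E}[r_i\mid\mathcal{F}_{t_i}].
\]
Subtracting, I write the penalty difference as in the proof of Theorem~\ref{discerror}: $-(\eta_1+\eta_2)\delta Y^\lambda_{t_i}$ plus $\lambda\Delta t\,\eta$, with $\eta_1,\eta_2\ge0$ and $|\eta|\le C_2|\delta X_{t_i}|$. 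The $f$-difference is at most $K_x|\delta X_{t_i}|+K_y|\delta Y^\lambda_{t_i}|$.

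Multiplying by $\mathrm{sgn}(\delta Y^\lambda_{t_i})$ and using $1+\eta_1+\eta_2\ge1$ yields
\[
(1-K_y\Delta t)\,\mathbb{E}|\delta Y^\lambda_{t_i}|\le \mathbb{E}|\delta Y^\lambda_{t_{i+1}}|+C_2\Delta t\,\mathbb{E}|\delta X_{t_i}|+C_2\lambda\Delta t\,\mathbb{E}|\delta X_{t_i}|+\mathbb{E}\bigl|\mathbb{E}[r_i\mid\mathcal{F}_{t_i}]\bigr| .
\]
The key point is that the monotone penalty contracts and never amplifies $\delta Y$. Summing over $n$ steps with \eqref{xdifbound} ($\mathbb{E}|\delta X_{t_i}|\le C_2\Delta\tilde t^{1/2}$) gives the terminal error plus $C_2(1+\lambda)\Delta\tilde t^{1/2}$, i.e.\ the $\lambda\Delta\tilde t^{1/2}$ term. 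So everything reduces to proving $\sum_i\mathbb{E}|\mathbb{E}[r_i\mid\mathcal{F}_{t_i}]|\le C_2(\Delta t^{1/2}+\lambda\Delta t)$, and a discrete Gr\"onwall argument then closes the proof.

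The $f$-part of $r_i$ is handled by the H\"older/Lipschitz bounds, \eqref{xdifbound2} and Corollary~\ref{apbcor}. Each step contributes $C_2\Delta t^{3/2}$, so the sum is $C_2\Delta t^{1/2}$. The main obstacle is the penalty part,
\[
\lambda\int_{t_i}^{t_{i+1}}\Bigl(\Phi(s)-\Phi(t_i)\Bigr)ds,\qquad \Phi(s):=(Y^\lambda_s-p_b(s,X_s))^--(p_w(s,X_s)-Y^\lambda_s)^- .
\]
Naive $L^2$ bounds give only $\lambda\Delta t^{1/2}$ in total, which is too crude. Instead I will exploit the conditional expectation, following \cite{PWW:24}. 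Apply It\^o--Tanaka to $S^b_s:=Y^\lambda_s-p_b(s,X_s)$, using Peskir's change-of-variable formula for $p_b(s,X_s)$ across $\Gamma$ (Assumption~\ref{assumption2}). This gives
\[
dS^b_s=\bigl(-f_\lambda(s,X_s,Y^\lambda_s)-\mathcal{L}p_b(s,X_s)\bigr)ds-dA^\Gamma_{b,s}+dM_s .
\]
Then apply Tanaka to $(S^b)^-$. The drift is bounded in $L^1$ uniformly in $\lambda$: $\lambda$ times the penalty is bounded by Corollary~\ref{stoapb}, $\mathcal{L}p_b$ grows polynomially (Assumption~\ref{assumption2}(iii)), and moments of $X,Y^\lambda$ are controlled by \eqref{xbound} and Proposition~\ref{xyzbound}. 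Hence
\[
\bigl|\mathbb{E}[(S^b_s)^--(S^b_{t_i})^-\mid\mathcal{F}_{t_i}]\bigr|\le \mathbb{E}\Bigl[\int_{t_i}^{s}(C_2+|\mathcal{L}p_b|)\,dr+J\sum_k (L^k_s-L^k_{t_i})+\tfrac12(L^0_s(S^b)-L^0_{t_i}(S^b))\Bigm|\mathcal{F}_{t_i}\Bigr].
\]
Summing over $i$, with the factor $\lambda\Delta t$ in front, the total contribution is
\[
\lambda\Delta t\,\mathbb{E}\Bigl[\int_0^T(C_2+|\mathcal{L}p_b|)\,dr+J\sum_k L^k_T+L^0_T(S^b)\Bigr]\le C_2\lambda\Delta t .
\]
Here \eqref{lbound} bounds the surface local times. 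The local time of $S^b$ itself is bounded in expectation through Tanaka's formula for $|S^b|$, using the $L^1$-bounded drift and the bounded quadratic variation (Proposition~\ref{xyzbound}). The $p_w$ term is treated symmetrically. The delicate check is that all these bounds are uniform in $\lambda$, which rests on Corollary~\ref{stoapb}.

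Finally, with the terminal condition \eqref{discterm}, Lipschitzness of $g$ and \eqref{xdifbound} give $\mathbb{E}|g(X_T)-\mathcal{Y}^{\lambda,\pi}_T|\le C_2\Delta\tilde t^{1/2}\le C_2\Delta t^{1/2}$, which yields the second bound.
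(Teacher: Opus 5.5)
Your proposal is correct and follows essentially the same route as the paper: you condition the one-step error identity on $\mathcal{F}_{t_i}$ and use the monotonicity factor $1+\eta_1+\eta_2\ge 1$, so the penalty only enters through $\lambda\Delta t\,\eta=O(\lambda\Delta t\,\Delta\tilde t^{1/2})$; you then control $\sum_i\mathbb{E}\bigl|\mathbb{E}[r_i\mid\mathcal{F}_{t_i}]\bigr|$ by the Peskir--Tanaka expansion of $(Y^\lambda-p_b)^-$ and $(p_w-Y^\lambda)^-$, with the drift bounded uniformly in $\lambda$ by Corollary~\ref{stoapb} and the local times bounded via Tanaka for $|Y^\lambda-p_b|$ and \eqref{lbound}, and close with discrete Gr\"onwall. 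One small precision: the $\int_{t_i}^{t_{i+1}}\mathbb{E}|Y^\lambda_s-Y^\lambda_{t_i}|\,ds$ piece is not shown to be $O(\Delta t^{3/2})$ per step, but its sum is $O(\Delta t^{1/2})$ by Cauchy--Schwarz and Corollary~\ref{apbcor}, which is all the Gr\"onwall step needs.
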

\begin{proof}
We use the same notations as in the proof of Theorem~\ref{discerror}. Taking conditional expectation of both sides of \eqref{final1} and then taking absolute values, we obtain

\begin{equation}
\label{ffinal2}
(1+\eta_1+\eta_2)\left|\delta Y^{\lambda}_{t_i}\right| =\left|\mathbb{E}\left[\delta Y^\lambda_{t_{i+1}}+\delta f_{t_i}\Delta t+ r_i+\lambda\Delta t\,\eta\Big| \mathcal{F}_{t_i}\right]\right|.
\end{equation}
Since $1+\eta_1+\eta_2>1$, we find that
\begin{equation}
    \label{ffinal3}
\mathbb{E}\left|\delta Y^{\lambda}_{t_i}\right|
        \le \mathbb{E}\left|\delta Y^\lambda_{t_{i+1}}\right|+\mathbb{E}\left|\delta f_{t_i}\right|\Delta t+\mathbb{E}\left|\mathbb{E}[r_i\mid\mathcal{F}_{t_i}]|\right|+C_2\lambda\Delta t\Delta \tilde{t}^\frac{1}{2}.
\end{equation}

We now estimate the third term $\mathbb{E}\left|\mathbb{E}[r_i\mid\mathcal{F}_{t_i}]|\right|$ in the right-hand side of the above inequality. Observe that
\begin{equation}
\label{rtermnew}
\begin{aligned}&\mathbb{E}[|\mathbb{E}[r_j\mid\mathcal{F}_{t_i}]|]\\
\le& \mathbb{E}\Bigg|\int^{t_{i+1}}_{t_i}f\left(s,X_s,Y^\lambda_s,0\right)-f\left(t_i,X_{t_i},Y^\lambda_{t_i},0\right)\,ds\Bigg|\\
&+\mathbb{E}\Bigg|\int^{t_{i+1}}_{t_i}\lambda\,\mathbb{E}\left[\left(Y^\lambda_s-p_b(s,X_s)\right)^--\left(Y^\lambda_{t_i}-p_b(t_i,X_{t_i})\right)^-\Big|\mathcal{F}_{t_i}\right]\,ds\Bigg|\\
&+\mathbb{E}\Bigg|\int^{t_{i+1}}_{t_i}\lambda\mathbb{E}\left[\left(p_w(s,X_s)-Y^\lambda_s\right)^--\left(p_w(t_i,X_{t_i})-Y^\lambda_{t_i})\right)^-\Big|\mathcal{F}_{t_i}\right]\,ds\Bigg|.
\end{aligned}
\end{equation}
Then, we have
\begin{equation}
    \label{rterm1new}
    \begin{aligned}
    &\mathbb{E}\Bigg|\int^{t_{i+1}}_{t_i}f\left(s,X_s,Y^\lambda_s,0\right)-f\left(t_i,X_{t_i},Y^\lambda_{t_i},0\right)\,ds\Bigg|\\
    \le& C_2\int^{t_{i+1}}_{t_i}\left(\sqrt{s-t_i}+\mathbb{E}[|X_s-X_{t_i}|^2]^\frac{1}{2}+\mathbb{E}|Y^\lambda_s-Y^\lambda_{t_i}|\right)\,ds\\
    \le& C_2\left(\Delta t^\frac{3}{2}+\int^{t_{i+1}}_{t_i}\mathbb{E}|Y^\lambda_s-Y^\lambda_{t_i}|\,ds\right).
    \end{aligned}
\end{equation}
Also, applying the change-of-variable formula with local time on surfaces
\citep{Peskir2007Surfaces}, we calculate that for $t_i\le s\le t_{i+1}$,
\begin{equation*}
\begin{aligned}
&\mathbb{E}\left[\left(Y^\lambda_s-p_b(s,X_s)\right)^-
-\left(Y^\lambda_{t_i}-p_b(t_i,X_{t_i})\right)^-\Big|\mathcal{F}_{t_i}\right]\\
=&\mathbb{E}\Bigg[
-\int_{t_i}^s \mathbf 1_{\{Y^\lambda_u-p_b(u,X_u)\le 0\}}\,d\left(Y^\lambda_u-p_b(u,X_u)\right)
+\frac12\left(L^b_s-L^b_{t_i}\right)\Big|\mathcal{F}_{t_i}\Bigg],
\end{aligned}
\end{equation*}
where $L^b:=L^0(Y^\lambda_\cdot-p_b(\cdot,X_\cdot))$. Moreover, by Peskir's formula we have
\[
dp_b(u,X_u)=\mathcal{L}p_b(u,X_u)\,du+(\nabla_xp_b\,\sigma)(u,X_u)\,dW_u+dA^\Gamma_{b,u}.
\]
Thus, using $dY^\lambda_u=-f_\lambda(u,X_u,Y^\lambda_u,Z^\lambda_u)\,du+Z^\lambda_u\,dW_u$, we obtain
\begin{equation*}
\begin{aligned}
&\mathbb{E}\left[\left(Y^\lambda_s-p_b(s,X_s)\right)^-
-\left(Y^\lambda_{t_i}-p_b(t_i,X_{t_i})\right)^-\Big|\mathcal{F}_{t_i}\right]\\
=&\mathbb{E}\Bigg[\int_{t_i}^s \mathbf 1_{\{Y^\lambda_u-p_b(u,X_u)\le 0\}}
\Big(f_\lambda(u,X_u,Y^\lambda_u,Z^\lambda_u)+\mathcal L p_b(u,X_u)\Big)\,du\\
&\qquad\quad
+\int_{t_i}^s \mathbf 1_{\{Y^\lambda_u-p_b(u,X_u)\le 0\}}\,dA^\Gamma_{b,u}
\\&\qquad\qquad-\int_{t_i}^s \mathbf 1_{\{Y^\lambda_u-p_b(u,X_u)\le 0\}}
\Big(Z^\lambda_u-(\nabla_x p_b\,\sigma)(u,X_u)\Big)\,dW_u\\
&\qquad\quad\qquad
+\frac12\left(L^b_s-L^b_{t_i}\right)\Bigg|\mathcal{F}_{t_i}\Bigg].
\end{aligned}
\end{equation*}
Taking conditional expectations eliminates the stochastic integral, hence
\begin{equation*}
\begin{aligned}
&\mathbb{E}\left[\left(Y^\lambda_s-p_b(s,X_s)\right)^-
-\left(Y^\lambda_{t_i}-p_b(t_i,X_{t_i})\right)^-\Big|\mathcal{F}_{t_i}\right]\\
=&\mathbb{E}\Bigg[\int_{t_i}^s \mathbf 1_{\{Y^\lambda_u-p_b(u,X_u)\le 0\}}
\Big(f_\lambda(u,X_u,Y^\lambda_u,Z^\lambda_u)+\mathcal L p_b(u,X_u)\Big)\,du
\\&\quad+\int_{t_i}^s \mathbf 1_{\{Y^\lambda_u-p_b(u,X_u)\le 0\}}\,dA^\Gamma_{b,u}+\frac12\left(L^b_s-L^b_{t_i}\right)\Bigg|\mathcal{F}_{t_i}\Bigg].
\end{aligned}
\end{equation*}
Thus, by Proposition~\ref{xyzbound}, continuity properties of $f$, polynomial growth of $\mathcal Lp_b$,
Corollary~\ref{stoapb}, and \eqref{xbound}, we have
\begin{equation}
\label{rterm2new}
\begin{aligned}
&\mathbb{E}\left|\int^{t_{i+1}}_{t_i}\mathbb{E}\left[\left(Y^\lambda_s-p_b(s,X_s)\right)^-
-\left(Y^\lambda_{t_i}-p_b(t_i,X_{t_i})\right)^-\Big|\mathcal{F}_{t_i}\right]\,ds\right|\\
\le&\mathbb{E}\left|\int^{t_{i+1}}_{t_i}\int^s_{t_i}\left|f\left(u,X_u,Y^\lambda_u,0\right)\right|\,du\,ds\right|\\
&+\mathbb{E}\left|\int^{t_{i+1}}_{t_i}\int^s_{t_i}\left|\lambda\left((Y^\lambda_u-p_b(u,X_u))^--(p_w(u,X_u)-Y^\lambda_u)^-\right)\right|\,du\,ds\right|\\
&+\mathbb{E}\left|\int^{t_{i+1}}_{t_i}\int^s_{t_i}\left|\mathcal{L}p_b(u,X_u)\right|\,du\,ds\right|\\
&+\mathbb{E}\left|\int^{t_{i+1}}_{t_i}\int^s_{t_i} d\big|A^\Gamma_{b}\big|_u\,ds\right|
+\mathbb{E}\left|\int^{t_{i+1}}_{t_i}\frac{1}{2}\left(L^b_s-L^b_{t_i}\right)\,ds\right|\\
\le& C_2\Big(\Delta t^2+\Delta t\,\mathbb{E}\left[L^b_{t_{i+1}}-L^b_{t_i}\right]
+\Delta t\,\sum_{k=1}^q\mathbb{E}\left[L^k_{t_{i+1}}-L^k_{t_i}\right]\Big).
\end{aligned}
\end{equation}
Here $|A^\Gamma_b|$ denotes the total variation process of $A^\Gamma_b$, and we used that
$d|A^\Gamma_b|_u\le C_2\sum_{k=1}^q dL_u^k$ by the bounded jump condition (Assumption~\ref{assumption2}(ii)) on $[\partial_{n_k}p_b]$.
By the same argument, we also have
\begin{equation}
\label{rterm3new}
\begin{aligned}
&\mathbb{E}\left|\int^{t_{i+1}}_{t_i}\mathbb{E}\left[\left(p_w(s,X_s)-Y^\lambda_s\right)^-
-\left(p_w(t_i,X_{t_i})-Y^\lambda_{t_i}\right)^-\Big|\mathcal{F}_{t_i}\right]\,ds\right|\\
\le&\; C_2\left(\Delta t^2+\Delta t\,\mathbb{E}\left[L^w_{t_{i+1}}-L^w_{t_i}\right]
+\Delta t\,\sum_{k=1}^q\mathbb{E}\left[L^k_{t_{i+1}}-L^k_{t_i}\right]\right),
\end{aligned}
\end{equation}
where $L^w_\cdot:=L^0\big(p_w(\cdot,X_\cdot)-Y^\lambda_\cdot\big)$ and $|A^\Gamma_w|$ denotes the total variation process of
 $A^\Gamma_w$.
Thus, plugging in \eqref{rterm1new}, \eqref{rterm2new}, and \eqref{rterm3new} into \eqref{rtermnew}, we obtain
\begin{equation}
\label{rtermn}
\begin{aligned}
\mathbb{E}\Big[\big|\mathbb{E}[r_j\mid\mathcal{F}_{t_i}]\big|\Big]
\le C_2\Big(&\Delta t^{\frac{3}{2}}+\lambda\Delta t^2
+\lambda\Delta t\,\mathbb{E}\big[L^{b,w}_{t_{i+1}}-L^{b,w}_{t_i}\big]\\
&+\lambda\Delta t\,\sum_{k=1}^q\mathbb{E}\left[L^k_{t_{i+1}}-L^k_{t_i}\right]\Big),
\end{aligned}
\end{equation}
where $L^{b,w}:=L^b+L^w$.

We now estimate
$\mathbb{E}\left|\delta Y^{\lambda}_{t_i}\right|$ in \eqref{ffinal3}.
We obtain
\begin{equation}
\label{fff}
\begin{aligned}
\mathbb{E}\left|\delta Y^{\lambda}_{t_i}\right|
\le& \mathbb{E}\left|\delta Y^\lambda_{t_{i+1}}\right|
+C_2\Big(\Delta t^{\frac{3}{2}}+\lambda\Delta t^2
+\lambda\Delta t\,\mathbb{E}\big[L^{b,w}_{t_{i+1}}-L^{b,w}_{t_i}\big]\Big)\\
&+C_2\lambda\Delta t\sum_{k=1}^q\mathbb{E}\left[L^k_{t_{i+1}}-L^k_{t_i}\right]
+C_2\Delta t\,\mathbb{E}\left|\delta Y^\lambda_{t_i}\right|
+C_2\lambda\Delta t\,\Delta \tilde{t}^{\frac{1}{2}}
\end{aligned}
\end{equation}
from \eqref{rtermn}  and 
\begin{equation}
\label{t1}
    \begin{aligned}
        \mathbb{E}|\delta f_{t_i}|&\le \mathbb{E}\left|K_x|X_{t_i}-X^{\tilde{\pi}}_{t_i}|+K_y\left|\delta Y^\lambda_{t_i}\right|\right|\\
        &\le C_2\left([\mathbb{E}[|X_{t_i}-X^{\tilde{\pi}}_{t_i}|^2]^\frac{1}{2}+ \mathbb{E}\left|\delta Y^\lambda_{t_i}\right|\right)\\
        &\le C_2\left(\Delta t^\frac{1}{2}+ \mathbb{E}\left|\delta Y^\lambda_{t_i}\right|\right).
    \end{aligned}
\end{equation}
which is achieved by the Cauchy--Schwarz inequality, Proposition~\ref{xyzbound}, and \eqref{xdifbound}.
Summing up the inequalities \eqref{fff}, we obtain
\begin{equation}
\label{done1}
\begin{aligned}
\sum_{j=i}^{n-1}\mathbb{E}\left|\delta Y^{\lambda}_{t_j}\right|
\le&\sum_{j=i}^{n-1}\mathbb{E}\left|\delta Y^\lambda_{t_{j+1}}\right|
+C_2\sum_{j=i}^{n-1}\Big(\Delta t^{\frac{3}{2}}+\lambda\Delta t^2+\lambda\Delta t\,\Delta \tilde{t}^{\frac{1}{2}}\Big)\\
&+C_2\lambda\Delta t\sum_{j=i}^{n-1}\mathbb{E}\big[L^{b,w}_{t_{j+1}}-L^{b,w}_{t_j}\big]
+C_2\lambda\Delta t\sum_{j=i}^{n-1}\sum_{k=1}^q\mathbb{E}\big[L^k_{t_{j+1}}-L^k_{t_j}\big]\\
&+C_2\sum_{j=i}^{n-1}\Delta t\,\mathbb{E}\left|\delta Y^\lambda_{t_j}\right|\\
\le&\sum_{j=i}^{n-1}\mathbb{E}\left|\delta Y^\lambda_{t_{j+1}}\right|
+C_2\Big(\Delta t^{\frac12}+\lambda\Delta t+\lambda\Delta \tilde{t}^{\frac12}\Big)\\
&+C_2\lambda\Delta t\,\mathbb{E}\big[L^{b,w}_T-L^{b,w}_0\big]
+C_2\lambda\Delta t\sum_{k=1}^q\mathbb{E}\left[L^k_T-L^k_0\right]\\
&+C_2\sum_{j=i}^{n-1}\Delta t\,\mathbb{E}\left|\delta Y^\lambda_{t_j}\right|.
\end{aligned}
\end{equation}

Finally, we derive the first desired inequality. 
Note that, by Peskir's change-of-variable formula,
\begin{equation*}
\begin{aligned}
\mathbb{E}\big[L^b_T-L^b_0\big]
=&\,\mathbb{E}\Big[|Y^\lambda_T-p_b(T,X_T)|-|Y^\lambda_0-p_b(0,X_0)|\Big]\\
&+\mathbb{E}\Bigg[\int_0^T \mathrm{sgn}\big(Y^\lambda_u-p_b(u,X_u)\big)
\Big(f_\lambda(u,X_u,Y^\lambda_u,Z^\lambda_u)+\mathcal L p_b(u,X_u)\Big)\,du\Bigg]\\
&+\mathbb{E}\Bigg[\int_0^T \mathrm{sgn}\big(Y^\lambda_u-p_b(u,X_u)\big)\,dA^\Gamma_{b,u}\Bigg].
\end{aligned}
\end{equation*}
Thus,
\begin{equation*}
\begin{aligned}
\mathbb{E}\big[L^b_T-L^b_0\big]
\le&\;\mathbb{E}|Y^\lambda_T|+\mathbb{E}|p_b(T,X_T)|
+\mathbb{E}|Y^\lambda_0|+\mathbb{E}|p_b(0,X_0)|\\
&+\int_0^T\mathbb{E}\big|f(u,X_u,Y^\lambda_u,0)\big|\,du
+\int_0^T\mathbb{E}\big|\mathcal L p_b(u,X_u)\big|\,du\\
&+\int_0^T\mathbb{E}\big|\lambda(Y^\lambda_u-p_b(u,X_u))^-\big|\,du
+\int_0^T\mathbb{E}\big|\lambda(p_w(u,X_u)-Y^\lambda_u)^-\big|\,du\\
&+\mathbb{E}\big[|A^\Gamma_b|_T-|A^\Gamma_b|_0\big]\\
\le&\; C_2\left(1+\sum_{k=1}^q\mathbb{E}\left[L^k_{T}-L^k_{0}\right]\right),
\end{aligned}
\end{equation*}
where we used Proposition~\ref{xyzbound}, and Corollary~\ref{stoapb}.
Similarly, $\mathbb{E}\big[L^w_T-L^w_0\big]\le C_2\left(1+\sum_{k=1}^q\mathbb{E}\left[L^k_{T}-L^k_{0}\right]\right)$, and hence
\begin{equation}
\label{done2}
\mathbb{E}\big[L^{b,w}_T-L^{b,w}_0\big]\le C_2\left(1+\sum_{k=1}^q\mathbb{E}\left[L^k_{T}-L^k_{0}\right]\right).
\end{equation}
Moreover, it follows from \eqref{lbound} that 
\begin{equation}\label{done3}\sum_{k=1}^q\mathbb{E}\left[L^k_{T}-L^k_{0}\right]\le C_2.
\end{equation}
Therefore, putting in \eqref{done2} and \eqref{done3} into \eqref{done1}, we arrive at
\begin{equation*}
\begin{aligned}
\mathbb{E}\left|\delta Y^{\lambda}_{t_i}\right|
        \le&\mathbb{E}\left|\delta Y^\lambda_T\right|+ C_2\left(\Delta t^\frac{1}{2}+\lambda\Delta \tilde{t}^\frac{1}{2}+\lambda\Delta t+\sum^{n-1}_{j=i}\Delta t\mathbb{E}\left|\delta Y^\lambda_{t_j}\right|\right).
        \end{aligned}
\end{equation*}
Applying the discrete Gr\"onwall inequality yields the first desired inequality.

Also, if \eqref{discterm} holds, then the Lipschitz property of $g$ and \eqref{xdifbound} implies $\mathbb{E}|\delta Y^\lambda_T|<C_2\Delta t^\frac{1}{2}$, yielding the second desired inequality.
\end{proof}

Combining Theorem~\ref{better} and Theorem~\ref{penaltyerror}, we obtain the total absolute error bound as follows.
\begin{corollary}
\label{cor:final}
      Suppose Assumptions \ref{assumptions}, \ref{assumptionpde}, \ref{assumption2}, and \ref{assumption_surface} hold and let $(X^{\tilde{\pi}},\mathcal{Y}^{\lambda,\pi},\mathcal{Z}^{\lambda,\pi})$ satisfy \eqref{emx} and \eqref{discretization}. Let $(X,Y,Z,A,K)$ denote the solution to \eqref{BSDE} and choose $\lambda>2K_y$. If $f(t,x,y,z)=f(t,x,y,0)$ for all $(t,x,y,z)\in[0,T]\times\mathbb{R}^d\times\mathbb{R}\times\mathbb{R}^d$, then we have
         \begin{equation*}\begin{aligned}
         &\max_{i\in\{0,\cdots,n-1\}}\mathbb{E}\left|Y_{t_i}-\mathcal{Y}^{\lambda,\pi}_{t_i}\right|
        \\\le& C_2 \Big((\mathbb{E}|g(X_T)-\mathcal{Y}^{\lambda,\pi}_T|+\Delta t^\frac{1}{2}+\lambda\Delta \tilde{t}^\frac{1}{2}+\lambda\Delta t+\frac{1}{\lambda}\Big).\end{aligned}\end{equation*}
Furthermore, if $(X^{\tilde{\pi}},\mathcal{Y}^{\lambda,\pi},\mathcal{Z}^{\lambda,\pi})$ also satisfies \eqref{discterm}, we have
         $$\max_{i\in\{0,\cdots,n-1\}}\mathbb{E}\left|Y_{t_i}-\mathcal{Y}^{\lambda,\pi}_{t_i}\right|
        \le C_2 \left(\Delta t^\frac{1}2+\lambda\Delta\tilde{t}^\frac{1}{2}+\lambda\Delta t+\frac{1}{\lambda}\right).$$
\end{corollary}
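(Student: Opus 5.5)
The plan is a triangle inequality at each coarse time $t_i$, splitting the error into a pure penalization part and a pure discretization part:
\[
\mathbb{E}\left|Y_{t_i}-\mathcal{Y}^{\lambda,\pi}_{t_i}\right|
\le \mathbb{E}\left|Y_{t_i}-Y^\lambda_{t_i}\right|+\mathbb{E}\left|Y^\lambda_{t_i}-\mathcal{Y}^{\lambda,\pi}_{t_i}\right|,
\]
where $(X,Y^\lambda,Z^\lambda)$ is the solution of the penalized BSDE \eqref{pBSDE} driven by the exact forward process $X$. Both terms are then controlled by results already established, and we take the maximum over $i\in\{0,\dots,n-1\}$ at the end.

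\emph{Penalization term.} Since $\lambda>2K_y$ and Assumptions~\ref{assumptions}, \ref{assumptionpde}, \ref{assumption2} hold, Theorem~\ref{ydiffbound} gives the almost sure bound
\[
|Y_t-Y^\lambda_t|\le e^{K_yT}\,\frac{2\max(D_w^+,D_b^-)}{\lambda}\qquad\text{for all } t\in[0,T].
\]
Taking expectations yields $\mathbb{E}|Y_{t_i}-Y^\lambda_{t_i}|\le C_2/\lambda$. The constant depends only on $D_b$, $D_w$, $K_y$ and $T$, so it is a legitimate $C_2$. Alternatively, one can apply Jensen's inequality to the $\mathbb{S}^2$ estimate of Theorem~\ref{penaltyerror}.

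\emph{Discretization term.} Here $f$ is independent of $z$ and Assumption~\ref{assumption_surface} also holds, so Theorem~\ref{better} applies directly. It gives
\[
\max_{i}\mathbb{E}\left|Y^\lambda_{t_i}-\mathcal{Y}^{\lambda,\pi}_{t_i}\right|\le C_2\left(\mathbb{E}|g(X_T)-\mathcal{Y}^{\lambda,\pi}_T|+\Delta t^{1/2}+\lambda\Delta\tilde t^{1/2}+\lambda\Delta t\right).
\]
Under \eqref{discterm} this reduces to $C_2\left(\Delta t^{1/2}+\lambda\Delta\tilde t^{1/2}+\lambda\Delta t\right)$.

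The one point to check is the hypothesis mismatch. The corollary assumes the scheme satisfies \eqref{discretization}, while Theorem~\ref{better} is stated for \eqref{scheme}. As noted after \eqref{discretization}, the explicit form \eqref{scheme} implies the implicit conditional-expectation form. The proof of Theorem~\ref{better}, however, only uses the one-step identity obtained after taking $\mathbb{E}[\cdot\mid\mathcal{F}_{t_i}]$ in \eqref{final1}, and that identity is exactly what \eqref{discretization} provides. The $\mathcal{Z}$ component plays no role because $f$ does not depend on $z$. So the estimate carries over verbatim. I would state this remark explicitly rather than re-derive it; it is the only step requiring care.

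Adding the two bounds and absorbing constants gives both displayed inequalities.
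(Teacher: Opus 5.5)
Your proposal is correct and follows the paper's own argument: a triangle inequality that separates the penalization error from the discretization error. The paper cites Theorem~\ref{penaltyerror} for the penalization part, whose $Y$-bound is exactly the Theorem~\ref{ydiffbound} estimate you use, and Theorem~\ref{better} for the discretization part. Your remark on the hypothesis mismatch is also right and fills a point the paper leaves implicit: Theorem~\ref{better} is stated for \eqref{scheme}, but its proof only uses the conditional expectation of \eqref{final1}, which \eqref{discretization} already supplies when $f$ is $z$-independent.
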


It is easy to see that the optimal error rate of $O(\Delta t^\frac{1}{2})$ is achieved with $\lambda=\Delta t^{-\frac{1}{2}}$ and $\Delta \tilde{t}=\Delta t/m$ for $m>(\lambda^2/C_2),m\in\mathbb{N}$.

\begin{remark}\label{rem:opt_rate}
In the bound of Corollary~\ref{cor:final}, the leading terms are
\[
\Delta t^{\frac12} \;+\; \lambda\,\Delta t \;+\; \lambda\,\Delta \tilde t^{\frac12} \;+\; \frac{1}{\lambda}.
\]
To obtain the target rate $O(\Delta t^{1/2})$, the auxiliary time step $\Delta \tilde t$ must be chosen so that
\[
\lambda\,\Delta \tilde t^{\frac12} = O(\Delta t^{\frac12}),
\qquad\text{equivalently}\qquad
\Delta \tilde t = O\!\left(\frac{\Delta t}{\lambda^{2}}\right).
\]
Under this coupling, choosing $\lambda \asymp \Delta t^{-1/2}$ yields
\[
\max_{i\in\{0,\dots,n-1\}}\mathbb{E}\bigl|Y_{t_i}-\mathcal{Y}^{\lambda,\pi}_{t_i}\bigr|
\;=\; O(\Delta t^{1/2}).
\]
\end{remark}

\section{Experimental results}

\paragraph{Problem setup and DRBSDE.}
We work under the risk-neutral Black--Scholes model
\[
dX_t = rX_t\,dt + \sigma X_t\,dW_t,\qquad X_0=24,\qquad t\in[0,T],\qquad T=1,
\]
with parameters \(r=0.02\) and \(\sigma=0.2\). We consider a one-dimensional Dynkin game (Israeli/game put) in which the holder may exercise to receive the put payoff, whereas the writer may cancel by paying an additional time-dependent premium. In the DRBSDE formulation, we solve for the adapted processes \((Y,Z,A,K)\) such that
\[
Y_t
=
g(X_T)
+\int_t^T (-rY_s)\,ds
+\bigl(A_T-A_t\bigr)
-\bigl(K_T-K_t\bigr)
-\int_t^T Z_s\,dW_s,
\qquad t\in[0,T],
\]
where \(A\) and \(K\) are nondecreasing processes with \(A_0=K_0=0\), together with the obstacle condition
\[
p_b(t,X_t)\le Y_t \le p_w(t,X_t),\qquad t\in[0,T],
\]
and the Skorokhod reflection conditions
\[
\int_0^T \bigl(Y_t-p_b(t,X_t)\bigr)\,dA_t=0,
\qquad
\int_0^T \bigl(p_w(t,X_t)-Y_t\bigr)\,dK_t=0.
\]
Here
\[
g(x)=(K-x)^+,\qquad
p_b(t,x)=(K-x)^+,\qquad
p_w(t,x)=p_b(t,x)+\delta(T-t),
\]
with strike \(K=25\) and \(\delta=1\). The quantity of interest is the initial value \(Y_0\).

\paragraph{Experimental procedure and parameter sweeps.}
We compare our penalized implicit discretization, implemented with the LSMC regression (\cite{LS01}) for conditional expectations against a penalty-free reference computed by a CRR binomial Dynkin-game recursion (\cite{CRR79}). The CRR baseline uses a recombining tree with a large number of steps, where \(N_{\mathrm{tree}}=10{,}000\), thereby yielding the reference value
\[
Y_0^{\mathrm{ref}}=1.626263795927.
\]

For our method, we simulate \(N=60{,}000\) sample paths of \(S\), repeat the experiment over three random seeds, and report the mean estimate of \(Y_0\) along with the absolute and relative errors
\[
\mathrm{abs\_err}=|Y_0-Y_0^{\mathrm{ref}}|,\qquad 
\mathrm{rel\_err}=\frac{|Y_0-Y_0^{\mathrm{ref}}|}{|Y_0^{\mathrm{ref}}|}.
\]

At each backward time step \(t_i\), the conditional expectation is approximated using a cross-sectional least-squares regression on the current state variable \(S_i\). More precisely, writing
\[
x_i=\frac{\log S_i-\overline{\log S_i}}{\mathrm{std}(\log S_i)},
\]
we use a linear spline (ReLU) basis of the form
\[
1,\quad x_i,\quad (x_i-k_1)^+,\ldots,(x_i-k_M)^+,\quad \frac{(K-S_i)^+}{K},\quad \mathbf 1_{\{S_i<K\}},
\]
where the knots \(k_1,\dots,k_M\) are chosen as empirical quantiles of \(x_i\). In the following experiments, we take \(M=30\) knots uniformly over the empirical quantile range from \(5\%\) to \(95\%\). The regression coefficients are computed using ridge least squares with ridge parameter \(10^{-8}\).

We perform two parameter sweeps.

\begin{enumerate}
\item \textbf{Time-step sweep.}
We vary the number of backward time steps
\[
n\in\{25,50,100,200,400,800,1600,3200\},\qquad \Delta t = T/n,
\]
and choose the penalty parameter according to the theoretically motivated scaling
\[
\lambda=\lambda_0 n^{1/2},\qquad \lambda_0=2000.
\]
Since Corollary~\ref{cor:final} predicts an \(O(\Delta t^{1/2})\) convergence rate in the \(Z\)-independent case under the coupling \(\lambda\asymp \Delta t^{-1/2}\), this experiment tests whether the observed error behaves like \(n^{-1/2}\). In addition to the measured errors, we fit a curve of the form \(Cn^{-1/2}\) to the observed relative errors.

\item \textbf{Penalty sweep at fixed \(n\).}
Fixing \(n=200\), we vary the penalty parameter according to
\[
\lambda=\lambda_{\mathrm{scale}}\,n^{a},\qquad \lambda_{\mathrm{scale}}=20,
\]
for several values of \(a\). The second experiment is intended to assess the finite-resolution sensitivity of the method to the penalty level and to determine whether the worst case balancing effect is visible for the example at hand.
\end{enumerate}
The forward process is sampled using the Euler--Maruyama scheme under the Black--Scholes model with $m=n$.

\paragraph{Results.}
Table~\ref{tab:grid_sweep} and Figure~\ref{fig:grid_sweep} summarize the time step sweep with
\[
\lambda=2000\,n^{1/2}.
\]
The observed relative error decreases steadily as \(n\) increases. Moreover, the graph closely follows the fitted profile
\[
Cn^{-1/2},\qquad C=3.93\times 10^{-1},
\]
over the first four grid levels, and for the finer grid levels the measured error falls slightly below the fitted curve. This behavior is consistent with the theoretical \(O(n^{-1/2})\) prediction.

Table~\ref{tab:penalty_sweep} and Figure~\ref{fig:penalty_sweep} show the penalty sweep at fixed \(n=200\). Over the tested range, the relative error decreases monotonically as \(\lambda\) increases. This behavior does not contradict the theory. Rather, it indicates that the experiment remains in a regime in which stronger penalization continues to improve the approximation, whereas the worst-case balancing effect is not visible for the particular case.

\begin{figure}
    \centering
    \includegraphics[width=0.8\linewidth]{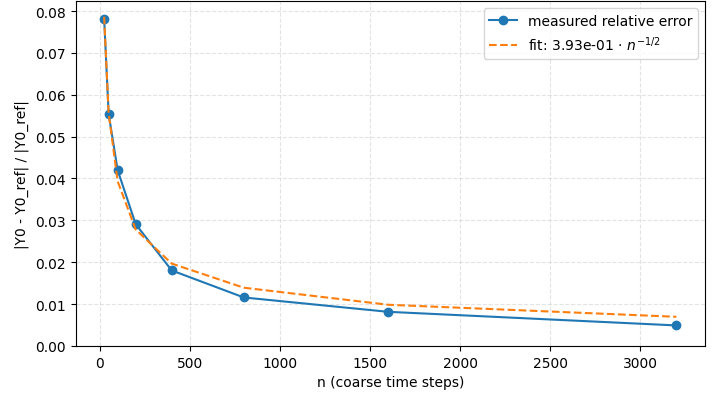}
    \caption{Relative error as a function of the coarse grid size \(n\) under the theoretically motivated choice \(\lambda=2000\,n^{1/2}\). The dashed curve represents the fitted profile \(Cn^{-1/2}\) with \(C=3.93\times 10^{-1}\). The measured error closely follows the fitted \(n^{-1/2}\) trend for the first four grid levels and then falls slightly below it, in agreement with the theoretical prediction. The plot is generated from the values reported in Table~\ref{tab:grid_sweep}.}
    \label{fig:grid_sweep}
\end{figure}

\begin{table}
\centering
\begin{tabular}{r r r r r r}
\hline
\(n,m\) & \(\Delta t\) & \(\lambda\) & \(Y_0\) & abs.\ error & rel.\ error \\
\hline
25   & \(4.000\times 10^{-2}\) & \(1.000\times 10^{4}\)   & 1.7534482612 & \(1.272\times 10^{-1}\) & \(7.821\times 10^{-2}\) \\
50   & \(2.000\times 10^{-2}\) & \(1.414\times 10^{4}\)   & 1.7163806797 & \(9.012\times 10^{-2}\) & \(5.541\times 10^{-2}\) \\
100  & \(1.000\times 10^{-2}\) & \(2.000\times 10^{4}\)   & 1.6945257566 & \(6.826\times 10^{-2}\) & \(4.197\times 10^{-2}\) \\
200  & \(5.000\times 10^{-3}\) & \(2.828\times 10^{4}\)   & 1.6735441304 & \(4.728\times 10^{-2}\) & \(2.907\times 10^{-2}\) \\
400  & \(2.500\times 10^{-3}\) & \(4.000\times 10^{4}\)   & 1.6555788985 & \(2.932\times 10^{-2}\) & \(1.803\times 10^{-2}\) \\
800  & \(1.250\times 10^{-3}\) & \(5.657\times 10^{4}\)   & 1.6450752175 & \(1.881\times 10^{-2}\) & \(1.157\times 10^{-2}\) \\
1600 & \(6.250\times 10^{-4}\) & \(8.000\times 10^{4}\)   & 1.6395064975 & \(1.324\times 10^{-2}\) & \(8.143\times 10^{-3}\) \\
3200 & \(3.125\times 10^{-4}\) & \(1.131\times 10^{5}\)   & 1.6342116681 & \(7.948\times 10^{-3}\) & \(4.887\times 10^{-3}\) \\
\hline
\end{tabular}
\caption{Grid-refinement sweep with \(\lambda=2000\,n^{1/2}\). The table reports the penalized-LSMC estimate \(Y_0\) and the corresponding absolute and relative errors with respect to the CRR reference value \(Y_0^{\mathrm{ref}}=1.626263795927\).}
\label{tab:grid_sweep}
\end{table}
\begin{figure}
    \centering
    \includegraphics[width=0.8\linewidth]{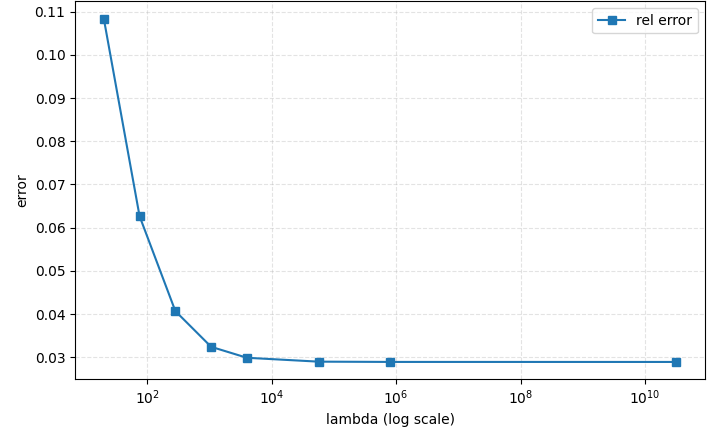}
    \caption{Relative error as a function of the penalty parameter \(\lambda\), with \(\lambda=20\,n^a\) for several values of \(a\). Over the tested range, the relative error decreases monotonically as \(\lambda\) increases. This indicates that the experiment remains in a regime where increasing penalization improves the approximation, while the asymptotic balancing effect predicted by the theory is not yet visibly resolved. The plot is generated from the values reported in Table~\ref{tab:penalty_sweep}.}
    \label{fig:penalty_sweep}
\end{figure}

\begin{table}
\centering
\begin{tabular}{r r r r r}
\hline
\(a\) & \(\lambda\) & \(Y_0\) & abs.\ error & rel.\ error \\
\hline
0    & \(2.000\times 10^{1}\)   & 1.8025543928 & \(1.763\times 10^{-1}\) & \(1.084\times 10^{-1}\) \\
0.25 & \(7.52121\times 10^{1}\) & 1.7281770784 & \(1.019\times 10^{-1}\) & \(6.267\times 10^{-2}\) \\
0.5  & \(2.82843\times 10^{2}\) & 1.6924566711 & \(6.619\times 10^{-2}\) & \(4.070\times 10^{-2}\) \\
0.75 & \(1.06366\times 10^{3}\) & 1.6790144395 & \(5.275\times 10^{-2}\) & \(3.244\times 10^{-2}\) \\
1    & \(4.000\times 10^{3}\)   & 1.6748887935 & \(4.862\times 10^{-2}\) & \(2.990\times 10^{-2}\) \\
1.5  & \(5.65685\times 10^{4}\) & 1.6734319322 & \(4.717\times 10^{-2}\) & \(2.900\times 10^{-2}\) \\
2    & \(8.000\times 10^{5}\)   & 1.6733274770 & \(4.706\times 10^{-2}\) & \(2.894\times 10^{-2}\) \\
4    & \(3.200\times 10^{10}\)  & 1.6733195215 & \(4.706\times 10^{-2}\) & \(2.893\times 10^{-2}\) \\
\hline
\end{tabular}
\caption{Penalty-parameter sweep at fixed \(n=200\), with \(\lambda=20\,n^a\). The table reports the penalized-LSMC estimate \(Y_0\), and the corresponding absolute and relative errors with respect to the CRR reference value \(Y_0^{\mathrm{ref}}=1.626263795927\).}
\label{tab:penalty_sweep}
\end{table}

\section{Conclusion}
\label{sec:con}

We analyzed a penalty approximation of a decoupled Markovian doubly reflected BSDE in the computational regime where penalization is coupled with time discretization. The central difficulty in the two-barrier setting is that the error produced by evaluating the obstacles along an approximated forward process is multiplied by the penalty parameter \(\lambda\). In contrast to the single-barrier case, no simple transformation removes this amplification for both obstacles simultaneously. This motivates the two-grid construction, in which the forward process is resolved separately from the backward discretization.

Under additional structural assumptions motivated by typical financial barriers, we sharpened the penalization error and obtained a uniform \(O(\lambda^{-1})\) bound for the value process. Combining this with the fully discrete scheme yields explicit error bounds in \((\Delta t,\tilde{\Delta t},\lambda)\) and transparent parameter coupling rules. In particular, in the \(Z\)-independent case, choosing
\[
\lambda\asymp \Delta t^{-1/2},
\qquad
\tilde{\Delta t}=O\!\left(\frac{\Delta t}{\lambda^2}\right)
\]
recovers the target \(O(\Delta t^{1/2})\) rate for the value process.

The numerical experiments support this picture. In the grid-refinement experiment, performed under the theoretically motivated scaling \(\lambda=2000\,n^{1/2}\), the observed relative error follows the predicted \(n^{-1/2}\) behavior closely. In the penalty sweep at a fixed resolution, the error continues to decrease as \(\lambda\) increases over the tested range, suggesting that the balancing effect is not present for our example.

However, several directions remain unexplored. Theoretically, it would be natural to extend the analysis beyond the decoupled Markovian setting, and investigate whether comparable quantitative results can be obtained for more general obstacle geometries or less regular coefficients. On the numerical side, it would be interesting to combine the present error analysis with a more flexible regression or deep-learning-based solvers in higher dimensions.

\paragraph{Acknowledgments}
 Hyungbin Park was supported by the National Research Foundation of Korea (NRF) grants funded by the Ministry of Science and ICT (Nos. 2021R1C1C1011675, 2022R1A5A6000840, RS-2026-25488333). Financial support from the Institute for Research in Finance and Economics of Seoul National University is gratefully acknowledged.

\bibliographystyle{spmpsci}
\bibliography{TGPDRBSDE}

\end{document}